\newtheorem{theorem}{Theorem}[section]
\newtheorem{lemma}[theorem]{Lemma}
\newtheorem{proposition}[theorem]{Proposition}
\newtheorem{corollary}[theorem]{Corollary}
\theoremstyle{remark}
\newtheorem{definition}{Definition}[section]
\newcommand{\set}{\mathbb}
\newcommand{\dl}{\nabla}
\newcommand{\les}{\lesssim}
\renewcommand{\frak}{\mathfrak}
\newcommand{\mc}{\mathcal}
\newcommand{\be}{\begin{equation}}
\newcommand{\ee}{\end{equation}}
\newcommand{\bee}{\begin{align}}
\newcommand{\eee}{\end{align}}
\newcommand{\ba}{\begin{array}}
\newcommand{\ds}{\displaystyle}
\newcommand{\ea}{\end{array}}
\newcommand{\bpm}{\begin{pmatrix}}
\newcommand{\epm}{\end{pmatrix}}
\newcommand{\lb}{\label}
\DeclareMathOperator{\supp}{supp}
\DeclareMathOperator{\Imim}{Im}
\newcommand{\oast}{\circledast}
\DeclareMathOperator*{\slim}{s-lim}
\DeclareMathOperator*{\esssup}{ess\; sup}
\newcommand{\ov}{\overline}
\newcommand{\dd}{{\,}{d}}
\renewcommand{\Im}{\Imim}
\newcommand{\R}{\mathbb R}
\newcommand{\C}{\mathbb C}
\newcommand{\Z}{\mathbb Z}
\newcommand{\B}{\mathcal B}
\DeclareMathOperator{\ISO}{O}
\DeclareMathOperator{\ISSO}{ISO}
\title[$L^1$ Wave Operators]{Structure of Wave Operators for a Scaling-Critical Class of Potentials}
\author{Marius Beceanu}
\address{Rutgers University Department of Mathematics, 110 Frelinghuysen Rd., Piscataway, NJ, 08854, USA}
\email{mbeceanu@math.rutgers.edu}
\thanks{The author has been supported by the ANR program PREFERED}
\subjclass[2010]{35P25, 47A40, 81U05, 35J10, 35Q41, 35L05}
\begin{document}
\maketitle
\numberwithin{equation}{section}
\begin{abstract} We prove a structure formula for the wave operators in $\R^3$
$$
W_{\pm} = \slim _{t \to \pm \infty} e^{it (-\Delta + V)} P_c e^{it\Delta}
$$
and their adjoints for a scaling-invariant class of scalar potentials $V \in B$,
$$
B = \Big\{V | \sum_{k \in \set Z} 2^{k/2} \|\chi_{|x| \in [2^k, 2^{k+1}]}(x) V(x)\|_{L^2} < \infty\Big\},
$$
under the assumption that zero is neither an eigenvalue, nor a resonance for $-\Delta+V$.\\
The formula implies the boundedness of wave operators on $L^p$ spaces, $1 \leq p \leq \infty$, on weighted $L^p$ spaces, and on Sobolev spaces, as well as multilinear estimates for $e^{itH} P_c$.\\
When $V$ decreases rapidly at infinity, we obtain an asymptotic expansion of the wave operators. The first term of the expansion is of order $\langle y \rangle^{-4}$, commutes with the Laplacian, and exists when $V \in \langle x \rangle^{-3/2-\epsilon} L^{2, 1}$.\\
We also prove that the scattering operator $S = W_-^* W_+$ is an integrable combination of isometries.\\
The proof is based on an abstract version of Wiener's theorem, applied in a new function space.
\end{abstract}

\tableofcontents

\section{Introduction}
\subsection{Main result}
Let $V$ be a real-valued scalar potential in $\set R^3$. Consider the free Hamiltonian $H_0 = - \Delta$, the perturbed Hamiltonian $H = -\Delta+V$, and let $P_c$ be the projection on the continuous spectrum of $H$. Wave operators are defined by
\be\begin{aligned}\lb{eq1.1}
W_{\pm} := W_{\pm}(H, H_0) := \slim_{t \to \pm \infty} e^{it H} e^{-itH_0}.
\end{aligned}\ee
Also note that the adjoints of the wave operators are given by
$$
W^*_{\pm} = \slim_{t \to \pm \infty} e^{it H_0} e^{-itH} P_c.
$$
Consider potentials $V$ belonging to the scaling-critical Banach space
\be\lb{eq1.6}
B = \Big\{V \big| \sum_{k \in \set Z} 2^{k/2} \|\chi_{|x| \in [2^k, 2^{k+1}]}(x) V(x)\|_{L^2} < \infty\Big\}.
\ee

This choice is motivated by the fact that $B$ is the real interpolation space $B = (L^2, |x|^{-1} L^2)_{\frac 1 2, 1}$. For more details about real interpolation, see \cite{bergh}.

Note that $\langle x \rangle^{-1/2-\epsilon} L^2 \subset B \subset L^{3/2, 1}$. $B$ is critical with respect to the rescaling $(t, x) \mapsto (\alpha^2 t, \alpha x)$ in (\ref{eq1.1}). Consequently, replacing $V$ by $\alpha^{2} V(\alpha x)$, for some $\alpha > 0$, preserves $\|V\|_B$ and the estimate (\ref{1.9}), up to constants.

Let $\mc M$ be the space of finite mass Borel measures and $\mc M_{loc}$ be the space of Borel measures with locally finite mass. Also let $\ISO(3) = \{s \in \B(\R^3, \R^3) \mid s^* s = I\}$ be the group of orthogonal linear transformations (isometries) on~$\R^3$.

The main result of this paper is then expressed by Theorem \ref{theorem_1.1}.
\begin{theorem}\lb{theorem_1.1}
Assume that $V \in B$ is real-valued and that $H = -\Delta+V$ admits no eigenfunction or resonance at zero. Then for each of $W_{\pm}$ and $W_{\pm}^*$ there exists $g_{s, y}(x) \in (\mc M_{loc})_{s, y, x}$ such that $\|g_{y, s}(x)\|_{L^{\infty}_x} \in L^1_y \mc M_s$, i.e.
$$
\int_{\set R^3} \Big(\int_{\ISO(3)} \dd \|g_{s, y}\|_{L^{\infty}_x}\Big) \dd y < \infty
$$
and for $f \in L^2$ one has the representation formula
\be\begin{aligned}\lb{eqn1.8}
(W f)(x) &= f(x) + \int_{\R^3} \Big(\int_{\ISO(3)} \dd g_{s, y}(x) f(sx + y)\Big) \dd y.
\end{aligned}\ee
Here $W$ is any of $W_{\pm}$ and $W_{\pm}^*$.

Thus, $W_\pm$ and $W_\pm^*$ are bounded on $L^p$, $1 \leq p \leq \infty$: if $f \in L^2 \cap L^p$, then
\be\lb{1.9}
\|W_{\pm} f\|_{L^p} + \|W_{\pm}^* f\|_{L^p} \les \|f\|_{L^p}.
\ee
\end{theorem}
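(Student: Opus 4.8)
The plan is to derive the representation formula \eqref{eqn1.8} from the \emph{stationary} representation of the wave operators, and to obtain the $L^p$ bound \eqref{1.9} as a soft consequence of it. Starting from \eqref{eq1.1}, I would first record the stationary formula
\be
W_\pm = I - \frac{1}{2\pi i}\int_0^\infty R_V(\lambda\pm i0)\,V\,\big[R_0(\lambda+i0)-R_0(\lambda-i0)\big]\,d\lambda,
\ee
pass to the energy variable $\omega=\sqrt\lambda$, and insert the explicit three-dimensional kernel $R_0^\pm(\omega)(x,y)=\frac{e^{\pm i\omega|x-y|}}{4\pi|x-y|}$. Using the resolvent identity in the form $R_V=(I+R_0V)^{-1}R_0$, the whole problem reduces to controlling the operator-valued function $\omega\mapsto (I+R_0^\pm(\omega)V)^{-1}$ and reading off the spatial kernel of the resulting composition. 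The essential difficulty is that in the scaling-critical class $B$ the potential carries no decay margin, so the Born series $\sum_n(-R_0V)^n$ need not converge and a direct Neumann expansion is unavailable; the inverse must be produced \emph{abstractly}.

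This is where the Wiener-type argument does the work. I would build a Banach algebra $\mc A$ of $\omega$-dependent operators --- the ``new function space'' advertised in the abstract --- engineered so that (i) $\omega\mapsto R_0^\pm(\omega)V$ lies in $\mc A$, with norm controlled by $\|V\|_B$, and (ii) $\mc A$ is inverse-closed: if $A\in\mc A$ and $I+A(\omega)$ is invertible as a bounded operator for every fixed $\omega\in[0,\infty]$, then $(I+A(\cdot))^{-1}-I\in\mc A$ as well. Property (i) is exactly why the weight $2^{k/2}$ and the $L^2$ summation over dyadic shells appear in the definition of $B$: they are matched to the weighted-$L^2$ (Agmon--Kato) bounds on the free resolvent, so that summing the shell contributions of $V$ against $R_0^\pm(\omega)$ converges uniformly in $\omega$. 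The algebra is moreover organized around the Euclidean motion group: its elements act as superpositions of the composition operators $f\mapsto f(sx+y)$ with $s\in\ISO(3)$, $y\in\R^3$, which is precisely the shape of \eqref{eqn1.8}, and the product in $\mc A$ corresponds to composition of such operators. With $\mc A$ so constructed, the abstract Wiener theorem yields $(I+R_0^\pm V)^{-1}\in\mc A$, and unwinding the stationary formula produces a kernel $g_{s,y}(x)$ with $\|g_{s,y}\|_{L^\infty_x}\in L^1_y\mc M_s$.

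To apply the Wiener theorem I must verify its pointwise hypothesis, namely that $I+R_0^\pm(\omega)V$ is invertible for every $\omega\in[0,\infty]$. For $\omega>0$ this follows from the limiting absorption principle together with the absence of positive eigenvalues (Kato), which forces $I+R_0^\pm(\omega)V$ to have trivial kernel. At $\omega=\infty$ one has $R_0^\pm(\omega)V\to0$, so $I+R_0^\pm(\omega)V\to I$ is trivially invertible. The single delicate point is the threshold $\omega=0$: invertibility there is \emph{equivalent} to the absence of a zero eigenvalue or a zero resonance of $H=-\Delta+V$, which is exactly the standing hypothesis. Thus the spectral assumption enters at, and only at, this endpoint, and I would devote the most care to establishing the continuity and uniform invertibility of $\omega\mapsto I+R_0^\pm(\omega)V$ as $\omega\to0^+$ so that the pointwise statement genuinely feeds the algebra.

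Finally, \eqref{1.9} is a soft consequence of \eqref{eqn1.8}. Each composition $C_{s,y}\colon f\mapsto f(sx+y)$ is an isometry of $L^p$ for every $1\le p\le\infty$, since $x\mapsto sx+y$ preserves Lebesgue measure. Taking $p=\infty$ and using that $|g_{s,y}(x)|\le\|g_{s,y}\|_{L^\infty_x}$ as measures in $s$ for a.e.\ $x$,
\be
|(Wf-f)(x)|\le \|f\|_{L^\infty}\int_{\R^3}\int_{\ISO(3)} d\|g_{s,y}\|_{L^\infty_x}\,dy,
\ee
whose right-hand factor is finite by hypothesis; the $L^1$ bound follows from the change of variables $x\mapsto sx+y$ and the same total-mass estimate, and the intermediate cases come from Riesz--Thorin interpolation, the identity term contributing $\|f\|_{L^p}$. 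The genuine obstacle is therefore entirely contained in the first three steps: constructing $\mc A$ so that $R_0V\in\mc A$ for $V$ merely in the critical space $B$, \emph{and} proving inverse-closedness with the threshold behaviour included --- together the real content behind the ``abstract version of Wiener's theorem, applied in a new function space.''
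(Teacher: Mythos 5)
Your high-level strategy is indeed the paper's: reduce via the resolvent identity to inverting $I+R_0(\cdot)V$, perform that inversion in a Banach algebra of integrable superpositions of elementary transformations via an abstract Wiener theorem, let the spectral hypothesis enter only through pointwise invertibility (with the threshold point corresponding exactly to the zero eigenvalue/resonance condition), and deduce the $L^p$ bounds softly at the end. However, the two steps you defer as ``the genuine obstacle'' are precisely where the proof lives, and your sketch of them contains a step that would fail as stated. Your inverse-closedness property (ii) --- pointwise invertibility of $I+A(\omega)$ for every $\omega$ implies invertibility in $\mc A$ --- cannot hold unconditionally for any algebra large enough to contain the element you must invert. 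After Fourier synthesis, the coefficients attached to $R_0V$ are genuinely \emph{singular} measures on $\ISO(3)\times\R^3$: the paper's formula (\ref{2.100}) shows they live on the codimension-three set $\{(S_\omega,t\omega)\}$, and the identity element is itself a Dirac mass. For measure-type algebras, pointwise invertibility of the Fourier--Stieltjes transform does not imply invertibility in the algebra (the Wiener--Pitt phenomenon), so an unconditional Wiener property is not available. This is why the paper's Theorem \ref{thm7} assumes, beyond pointwise invertibility, translation continuity of some \emph{power} $L^n$ of the perturbation and uniform decay at infinity of all its contractions, conditions (\ref{2.73})--(\ref{2.74}); and, crucially, the continuity is verified not for $T_{1+}$ itself (which is too singular) but for $T_{1+}\oast T_{1+}$, exploiting the smoothing effect of the convolution $\oast$ (Lemma \ref{lm2.13}). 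Without formulating the Wiener theorem with these hypotheses and verifying them for the square of the Born term, the local-inversion-and-covering argument cannot begin.

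Your step (i) is also not the right mechanism. ``Summing the shell contributions of $V$ against $R_0^{\pm}(\omega)$ uniformly in $\omega$'' yields an $L^\infty$ bound in the dual variable on the operator family, i.e.\ membership in the paper's auxiliary space $Z$ of Lemma \ref{lm2.4}; the paper states explicitly that $T_+\in Z$ does \emph{not} imply boundedness of the wave operators. What is needed is $L^1$-integrability of the structure coefficients, and this comes from an explicit kernel computation plus real interpolation: one writes $W_{1+}f(x)=\int K_1(x,t\omega)f(x+t\omega)\,dt\,d\omega$, observes that the $x$-dependent translation obeys $f(x+(t-2x\cdot\omega)\omega)=f(S_\omega x+t\omega)$ --- this identity is where the reflections, hence $\ISO(3)$, actually come from, a structure your proposal asserts but never derives --- and then proves $\|L_1\|_{L^1_{t,\omega}}+\|\tilde L_1\|_{L^1_{t,\omega}}\les\|V\|_B$ (Lemma \ref{lema2.9}) by interpolating between the Plancherel bound for $V\in L^2$ and an integration-by-parts bound for $V\in|x|^{-1}L^2$, which is exactly where $B=(L^2,|x|^{-1}L^2)_{\frac12,1}$ is used. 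These two ingredients --- the reflection structure of the Born term and the interpolation bound on its coefficients --- are what make corrected versions of your (i) and (ii) true; without them, the proposal restates the goal rather than proving it.
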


Recall that eigenfunctions at energy $\lambda$ are $L^2$ solutions of the equation $Hf = \lambda f$. Resonances are defined as solutions $f$ of $Hf = \lambda f$ that are not in $L^2$, but belong to $\langle x\rangle^\sigma L^2$ for all $\sigma>1/2$.


\noindent{\bfseries Interpretation.} Let elementary transformations be maps of the form $f(x) \mapsto g(x) f(sx + y)$, where $g \in L^{\infty}$, $s \in \ISO(3) := \{s \in \B(\R^3, \R^3) \mid s^*s = I\}$, and $y \in \R^3$.

Elementary transformations are composed of translations, multiplication by a bounded function, and orthogonal linear transformations.

Then (\ref{eqn1.8}) states that the wave operators $W_{\pm}$ given by (\ref{eq1.1}) and their adjoints $W_{\pm}^*$ are combinations of elementary transformations, integrable with respect to a measure on $\ISO(3) \times \R^3$ that is absolutely continuous in the $y \in \R^3$ variable, but singular in $s \in \ISO(3)$.


For $f \in L^2 \cap L^p$, $W_{\pm} f \in L^p$ by (\ref{1.9}).
Since $L^2 \cap L^p$ is dense in $L^p$ when $1\leq p<\infty$, $W_{\pm}$ and $W_{\pm}^*$ then admit unique $L^p$-bounded extensions --- and likewise on $L^{\infty}_0$, the $L^{\infty}$ closure of $L^2 \cap L^{\infty}$.

Formula (\ref{eqn1.8}) also gives a weakly continuous extension of $W_{\pm}$ to $L^p$ such that, if $f_n \rightharpoonup f$ weakly or weakly-$*$, then $W_{\pm} f_n \rightharpoonup W_{\pm} f$. This defines wave operators on non-separable Banach spaces such as $L^{\infty}$ or $\mc M$, in which $L^2$ is not dense, but whose dual or predual is separable.

Extensions and applications of Theorem \ref{theorem_1.1} are presented in Section \ref{sect_extapp}. They include a structure formula for the scattering operator, the boundedness of wave operators on Sobolev and weighted $L^p$ spaces, and a multilinear estimate that is a specific application of (\ref{eqn1.8}).

Finally, the proof also implies the norm continuity of the wave operators and of the coefficients $g_{s, y}$ from (\ref{eqn1.8}), as functions of the potential $V \in B$.

\subsection{Overview and history of the problem}
The notion of wave operators was introduced in the work of Moller and Friedrichs in the 1940s, then developed by Jauch, Cook, and Kato. For an account of these early findings and of the theory of wave operators in a Hilbert space setting, the reader is referred to Reed--Simon \cite{reesim}.

Wave operators are said to be \emph{asymptotically complete} in $L^2$ when
\begin{enumerate}
\item[i] $W_{\pm}$ are bounded and surjective from $L^2$ to $P_c L^2$.
\item[ii] The singular continuous spectrum of $H$ is empty, $\sigma_{sc}(H) = \varnothing$.
\end{enumerate}
In particular, this is the case when $V \in \langle x \rangle^{-1-\epsilon} L^{\infty}$, $\epsilon>0$. This fundamental result due to Agmon \cite{agmon} is based on and completes earlier work of Kato \cite{kato}, Kuroda, and others.

More recently, Ionescu--Schlag \cite{ionschlag} showed the asymptotic completeness of wave operators for potentials $V \in L^{3/2}$, $V \in L^2$, and in even more general classes, including some magnetic potentials, i.e.\ with gradient terms.

When the wave operators are asymptotically complete, they define a partial isometry between $L^2$ and $P_c L^2$, meaning that
\be
W_+^* W_+ = W_-^* W_- = I,\ W_+ W_+^* = W_- W_-^* = P_c.
\ee

Wave operators are useful in the theoretical study of  scattering. $W_{\pm}$ measure the similarity between the perturbed Schr\"{o}\-din\-ger evolution $e^{itH} P_c$ and the free evolution $e^{itH_0}$, being the identity when $V \equiv 0$.

Wave operators also help define the \emph{scattering operator} $S$, a fundamental notion in quantum mechanics, by
\be\lb{1.5}
S = W_-^* W_+.
\ee
$S$ and $S^*$ commute with $H_0$. Analogously one can define $\tilde S = W_- W_+^*$, which commutes with $H$. These notions were introduced in the work of Eckstein, Berezin--Fadeev--Minlos, and Jauch in the 1950s. For more details, we refer the reader to \cite{reesim}.

The scattering operator $S$ describes how a plane wave $e^{ix\xi}$ coming in from infinity scatters, upon encountering the potential $V$, into a superposition of plane waves as time goes to infinity.

The $L^1$ theory of wave operators is newer and has been developed by Yajima, beginning with his seminal paper \cite{yajima0} and with \cite{yajima}. 
A main application is transferring the dispersive properties of the free evolution $e^{itH_0}$ to the dispersive part of the perturbed evolution $e^{itH} P_c$. This is based on the intertwining property
\be\lb{1.4}
e^{itH} P_c = W_{\pm} e^{itH_0} W_{\pm}^*.
\ee

Thus, any linear estimates that hold for $e^{itH_0}$ carry over to $e^{itH} P_c$ when wave operators are bounded on the proper $L^p$ spaces. These include Strichartz inequalities, local smoothing estimates, and $L^p \to L^q$ decay estimates --- both for the Schr\"{o}dinger equation and for the wave equation.

Linear estimates carry over to the perturbed case because of the $L^p$ wave operator boundedness. In addition, some multilinear estimates, such as Proposition \ref{multi}, that hold for  $e^{itH_0}$ are transferred to $e^{itH}$ by the structure formula (\ref{eqn1.8}).

Some of these results admit more direct proofs, e.g.\ \cite{jss} and \cite{keetao}, and are more general than the $L^1$ boundedness of the wave operators. However, (\ref{1.4}) provides a straightforward proof when $W_\pm$ and $W_\pm^*$ are bounded.

Relation (\ref{1.4}) also gives rise to a functional calculus for $H$, as per \cite{yajima}. When $f \in L^{\infty}$, when $f$ is a Mihlin multiplier ($|\partial^{\alpha} f| \les_{\alpha} |x|^{-|\alpha|}$), or when $f \in \widehat {L^1}$, take in each case
$$
f(\sqrt{H P_c}) = W_{\pm} f(\sqrt{H_0}) W_{\pm}^*.
$$
The operators $f(\sqrt{H P_c})$ thus defined form commutative algebras of bounded $L^2$, $L^p$, $1<p<\infty$, and $L^1$ operators, respectively.

The first example is the usual functional calculus for selfadjoint operators, see \cite{reesim1}. The other two are specifically related to the $L^1$ boundedness of wave operators. One obtains Paley-Wiener projections, Littlewood-Paley square functions, and Sobolev and Besov spaces defined with respect to $H=-\Delta+V$ instead of $H_0=-\Delta$.

All previous results concerning the boundedness of wave operators on $L^p$ spaces, $p \ne 2$, are due to Yajima, beginning with his seminal paper \cite{yajima0} --- and to Artbazar--Yajima \cite{artyaj1d} and D'Ancona--Fanelli \cite{danfan1d} in one dimension. Since then, Yajima and his collaborators have obtained theorems that apply to all odd dimensions $d \geq 3$ \cite{yajima0} \cite{yajima} \cite{yajima5}, all even dimensions $d \geq 4$ \cite{yajima8} \cite{yajima7} \cite{yajima9}, as well as to the $2$-dimensional case \cite{yajima6}, \cite{jenyaj}.

One has to distinguish between operators without null eigenvalues or resonances --- of \emph{generic type}, see \cite{jenkat} --- and the situation when zero is an eigenvalue or a resonance for $H$ --- i.e.\ $H$ is of \emph{exceptional type} (the precise definition varies according to dimension). Yajima also made, for all dimensions $d \geq 3$, the assumption that
\be
\big(\langle x \rangle^{\frac{2(d-2)}{d-1}+\epsilon}V(x)\big)^{\wedge} \in L^{\frac{d-1}{d-2}}.
\ee
This becomes $\langle x \rangle^{1+\epsilon} V \in L^2$ in $\R^3$.
\begin{list}{\labelitemi}{\leftmargin=1.2em}
\item[1.] In $\R^3$ Yajima \cite{yajima} \cite{yajima5} proved the $L^p$ boundedness of $W_{\pm}$ for $1 \leq p \leq \infty$, provided that $|V(x)| \les \langle x \rangle^{-5-\epsilon}$ and $H$ is of generic type, and for $3/2 < p < 3$, if $|V(x)| \les \langle x \rangle^{-6-\epsilon}$ and $H$ is of exceptional type.
\item[2.] For odd $d \geq 5$, Yajima \cite{yajima} \cite{yajima5} obtained the $L^p$ boundedness of the wave operators in $\R^d$ for $1 \leq p \leq \infty$, if $|V(x)| \les \langle x \rangle^{-d-2-\epsilon}$ and $H$ is of generic type, and for $\frac d {d-2} < p < \frac d 2$, if $|V(x)| \les \langle x \rangle^{-d-3-\epsilon}$ and $H$ is of exceptional type.
\item[3.] For even $d \geq 6$, Finco--Yajima \cite{yajima9} showed the $L^p$ boundedness of the wave operators in $\R^d$:
\begin{list}{\labelitemi}{\leftmargin=1.2em}
\item[i] for $1 \leq p \leq \infty$, if $|V| \les \langle x \rangle^{-d-2-\epsilon}$ and $H$ is of generic type
\item[ii] for $\frac d {d-2} < p < \frac d 2$, if $H$ is of exceptional type and \begin{list}{\labelitemi}{\leftmargin=1.2em}
\item[a)]
$|V(x)| \les \langle x \rangle^{-d-3-\epsilon}$, $d \geq 8$, or
\item[b)]
$|V(x)| \les \langle x \rangle^{-10-\epsilon}$, $d = 6$.
\end{list}
\end{list}
\item[4.] In $\R^4$, Yajima \cite{yajima8} \cite{yajima7} showed the boundedness of the wave operators on $L^p$, $1 \leq p \leq \infty$, for $H$ of generic type, when
\begin{list}{\labelitemi}{\leftmargin=1.2em}
\item[i] $V \geq 0$ and $|D^{\alpha} V| \les \langle x \rangle^{-7-\epsilon}$ for all $|\alpha| \leq 4$ or
\item[ii] $\ds \sup_{x \in \set R^4} \langle x \rangle^{7+\epsilon} \Big(\int_{|x-y| \leq 1} |D^{\alpha} V(y)|^{2+\epsilon} \dd y \Big)^{\frac 1 {2+\epsilon}} < \infty$ for all $|\alpha| \leq 1$.
\end{list}
Yajima obtained similar conclusions in even dimensions $d \geq 6$, but the result of Finco--Yajima \cite{yajima9} supersedes them.
\item[5.] In $\R^2$ Jensen--Yajima \cite{jenyaj} showed that, if $V$ is of generic type and $|V(x)| \les \langle x\rangle^{-6-\epsilon}$, then $W_{\pm}$ are bounded in $L^p$, $1<p<\infty$.
\end{list}

In addition, for all dimensions $d\geq 3$, Yajima proved the boundedness of the wave operators if $\big\|\big(\langle x \rangle^{\frac{2(d-2)}{d-1}+\epsilon}V(x)\big)^{\wedge}\big\|_ {L^{\frac{d-1}{d-2}}}$ is sufficiently small. As noted, this becomes $\|\langle x \rangle^{1+\epsilon} V(x)\|_{L^2}$ in $\R^3$.

In this paper we start with an asymptotic expansion of the wave operator. For $f \in L^2$
\begin{align}\lb{2.2}
W_+ Z &= Z + W_{1+} Z + \ldots + W_{n+} Z + \ldots,\\
\nonumber W_{1+} Z &= i \int_{t>0} e^{-i t \Delta} V e^{i t \Delta} Z \dd t,\ \ldots \\
W_{n+} Z &=(-1)^{n-1} i^n \int_{t>s_1>\ldots>s_{n-1}>0} e^{-i(t-s_1)\Delta} V e^{-i(s_1-s_2) \Delta} V \ldots \\
\nonumber &e^{-i s_{n-1} \Delta} V e^{it\Delta} Z \dd t \dd s_1 \ldots \dd s_{n-1}.
\end{align}
We derive this expansion by Duhamel's identity (\ref{duham}) in Section \ref{lemmas}.

The first term is the identity, hence always bounded. Yajima proved in \cite{yajima0} that each remaining term $W_{n+}$, $n \geq 1$, is bounded as an $L^p$ operator, of norm that grows exponentially with $n$: in $\R^3$
\be\lb{1.7}
\|W_{n+} f\|_{L^p} \les C^n \|V\|_{\langle x \rangle^{-1-\epsilon} L^2}^n \|f\|_{L^p}.
\ee

Thus, as noted by Yajima \cite{yajima0}, when $\|V\|_{\langle x \rangle^{-1-\epsilon} L^2} << 1$ Weierstrass's criterion shows that (\ref{2.2}) is summable, hence $W_+$ is $L^p$-bounded. In general the asymptotic expansion (\ref{2.2}) may diverge.

In order to overcome this difficulty, for large $V$ Yajima \cite{yajima0} estimated a finite number of terms directly by this method. He used a separate computation to show the boundedness of the remainder, for which he had to assume that $V$ decays faster than~$\langle x \rangle^{-5-\epsilon}$.

Theorem \ref{theorem_1.1} proves the $L^1$ boundedness of wave operators for potentials $V$ in the scaling-invariant class $B$ defined by (\ref{eq1.6}). $B$ consists of $L^2$ functions weighted on dyadic shells, with a summability condition for the weights. $B$ is similar to the class $\langle x \rangle^{-1-\epsilon} L^2$ considered by Yajima, but requires less decay and is scaling-invariant. In addition, Theorem \ref{theorem_1.1} applies to arbitrarily large potentials and we obtain a structure formula, on top of $L^1$ boundedness.

We achieve this by using a summation method in (\ref{1.7}) --- an abstract version of Wiener's theorem, Theorem \ref{thm7} --- which also works for divergent asymptotic expansions, eliminating the need for a separate analysis of the remainder. This is the same method as in \cite{bec} or \cite{becgol}, applied in a different space of functions. However, in the current paper Wiener's theorem is completely intertwined with the rest of the proof, making its independent abstract formulation less useful.

Up to a point, the underlying computations parallel those of \cite{yajima0}. The spaces used in proving Theorem \ref{thm7} are new.

We require the absence of threshold eigenvalues or resonances from the continuous spectrum of $H = - \Delta + V$. Their presence leads to substantially different results, as shown by \cite{yajima5} and \cite{yajima9}.

Besides $L^p$ boundedness, Yajima \cite{yajima0} \cite{yajima} \cite{yajima7} \cite{yajima8} and Finco--Yajima \cite{yajima9} proved the boundedness of wave operators on Sobolev spaces $W^{\ell, p}$, in some cases for $1 \leq p \leq \infty$, in others without the endpoints $1$ and~$\infty$:
\begin{list}{\labelitemi}{\leftmargin=1.2em}

\item[1.] \cite{yajima8} showed that $W_\pm$ and $W_\pm^*$ are bounded in $\R^d$, $d \geq 3$, on $W^{\ell, p}$, $1 \leq p \leq \infty$, for $\|\partial^{\alpha} V(y)\|_{L^{d/2+\epsilon}(|y-x|<1)} \les \langle x \rangle^{-3d/2-1-\epsilon}$ for all $|\alpha| \leq \ell + \ell_0$, where $\ell_0 = 0$ when $d=3$ and $l_0 = \lfloor (d-1)/2 \rfloor$ when $d \geq 4$.

\item[2.] \cite{yajima9} showed that $W_\pm$ and $W_\pm^*$ are bounded in $\R^d$, $d \geq 3$ odd or $d \geq 6$ even, on $W^{\ell+2, p}$, $1 < p < \infty$, if $\big(\langle x \rangle^{\frac{2(d-2)}{d-1}+\epsilon} V(x)\big)^{\wedge} \in L^{\frac{d-1}{d-2}}$, $|V(x)| \les \langle x \rangle^{-d-2-\epsilon}$, and $\partial^\alpha V(x)$ are bounded for $|\alpha| \leq \ell$. For $p = 1$ and $p = \infty$, $W_\pm$ were shown to be bounded in $W^{k,p}(\R^d)$ if $\big(\langle x \rangle^{\frac{2(d-2)}{d-1}+\epsilon} \partial^{\alpha} V(x)\big)^{\wedge} \in L^{\frac{d-1}{d-2}}$ for all $|\alpha| \leq \ell$ and $|\partial^\alpha V (x)| \leq \langle x \rangle^{-d-2-\epsilon}$, $0 \leq \alpha \leq \ell$. 

\end{list}

These estimates require two fewer derivatives of the potential than the degree of regularity obtained for $W_\pm$, for $1<p<\infty$, and the same degree of regularity when $p=1$ or $p=\infty$.

In Corollary \ref{cor_sobolev} and in Corollary \ref{cor_sobolev_mare} we improve this result by one derivative in the endpoint case $p=1$. Assuming no regularity for $V$, we prove that $W_{\pm}$ are bounded on $\dot W^{1, 1}$.

\subsection{Further research directions} We conduct the study of wave and scattering operators in $\R^3$ for clarity, but the same method works in all higher dimensions.

The reader is referred to Theorem \ref{thm_general} and its proof, which unlike Theorem \ref{theorem_1.1} generalizes to all dimensions $d \geq 3$ for the potential space
$$
B_d := \Big\{V \mid \sum_{k \in \Z} 2^{\frac{d-2}{d-1} k} \|\chi_{|x| \in [2^k, 2^{k+1}]}(x) V(x)\|_{\widehat {L^{\frac {d-1} {d-2}}}} < \infty \Big\}.
$$
In this expression $\chi$ are smooth cutoff functions. This result will be the subject of a future paper.

Another case of interest is that of nonselfadjoint potentials obtained by linearizing Schr\"{o}dinger's equation around solitons. The main difference is the necessity of proving an extra estimate of the type
$$
\Big\|\int_0^{\infty} \widehat V(s\omega) e^{-st} \dd s \Big\|_{L^1_{t, \omega}} \les \|V\|_B.
$$
For simplicity, we only treat the selfadjoint case in this paper.

\subsection{Extensions and applications. The scattering operator}\lb{sect_extapp}

The first application to (\ref{eqn1.8}) is a structure formula for the scattering operator (\ref{1.5}).


\begin{theorem}\lb{thm_s} Assume that $V \in B$ is real-valued and zero is neither an eigenvalue, nor a resonance for $H=-\Delta+V$. Then $S$ is an integrable combination of isometries: for $f \in L^2$,
\be\begin{aligned}\lb{eqn1.11}
(Sf)(x) &= f(x) + \int_{\set R^3} \Big(\int_{\ISO(3)} f(sx + y) \dd g_{s, y}\Big) \dd y,\\
&\int_{\set R^3} \Big(\int_{\ISO(3)} \dd|g_{s, y}|\Big) \dd y < \infty.
\end{aligned}\ee
Consider any Banach space $A$ of functions on $\set R^3$ such that the norm of $A$ is invariant under isometries. Then, for any $f \in L^2 \cap A$,
\be
\|S f\|_A + \|S^* f \|_A \les \|f\|_A.
\ee
\end{theorem}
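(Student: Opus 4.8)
The plan is to derive Theorem \ref{thm_s} as a corollary of Theorem \ref{theorem_1.1} by composing the structure formulas for $W_-^*$ and $W_+$. Since $S = W_-^* W_+$, and both factors are given by formula (\ref{eqn1.8}), the essential task is to show that the \emph{composition} of two elementary-transformation integrals is again an elementary-transformation integral, and that the $L^1_y \mc M_s$ mass of the composed kernel is controlled by the product of the masses of the two factors. First I would write each of $W_+$ and $W_-^*$ as $I + T_1$ and $I + T_2$, where $T_j$ has the integral form in (\ref{eqn1.8}) with coefficient $g^{(j)}_{s,y}$. Then $S = I + T_1 + T_2 + T_2 T_1$, and the first three summands are immediately of the required form, so the crux is the product $T_2 T_1$.

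The key computation is the group-composition law for elementary transformations. An elementary transformation sends $f(x) \mapsto g(x) f(sx+y)$; composing $f \mapsto g_2(x) f(s_2 x + y_2)$ after $f \mapsto g_1(x) f(s_1 x + y_1)$ yields $f \mapsto g_2(x)\, g_1(s_2 x + y_2)\, f\bigl(s_1 s_2 x + s_1 y_2 + y_1\bigr)$. Thus the composed transformation is elementary with isometry $s_1 s_2 \in \ISO(3)$ (the product of two isometries is an isometry), translation $s_1 y_2 + y_1 \in \R^3$, and bounded multiplier $g_2(x) g_1(s_2 x + y_2)$. I would integrate this against $\dd g^{(2)}_{s_2, y_2}(x)\, \dd y_2$ and $\dd g^{(1)}_{s_1, y_1}(x)\, \dd y_1$, then change variables to the new parameters $s = s_1 s_2$ and $y = s_1 y_2 + y_1$ to exhibit $T_2 T_1$ in the form (\ref{eqn1.11}), thereby \emph{reading off} the composed coefficient $g_{s,y}$. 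The total mass estimate then follows from Fubini and the pointwise bound $\|g_2(\cdot)\, g_1(s_2\cdot + y_2)\|_{L^\infty_x} \le \|g_2\|_{L^\infty}\|g_1\|_{L^\infty}$, which gives
\be\begin{aligned}
\int_{\R^3}\!\int_{\ISO(3)} \dd |g_{s,y}|
&\le \Big(\int_{\R^3}\!\int_{\ISO(3)} \dd \|g^{(1)}_{s,y}\|_{L^\infty_x}\Big)
\Big(\int_{\R^3}\!\int_{\ISO(3)} \dd \|g^{(2)}_{s,y}\|_{L^\infty_x}\Big) < \infty,
\end{aligned}\ee
finiteness of each factor being exactly the conclusion of Theorem \ref{theorem_1.1}. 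The translation invariance of Haar/Lebesgue measure on $\R^3$ and the bi-invariance of the measure $\mc M_s$ under the group operation on $\ISO(3)$ are what make the change of variables mass-preserving.

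For the final norm bound, I would observe that when $A$ has an isometry-invariant norm, each elementary transformation $f \mapsto g(x) f(sx+y)$ satisfies $\|g(\cdot)f(s\cdot + y)\|_A \le \|g\|_{L^\infty}\|f(s\cdot + y)\|_A = \|g\|_{L^\infty}\|f\|_A$, using that composition with the isometry $s$ and the translation $y$ preserves $\|\cdot\|_A$. Applying this inside the integral (\ref{eqn1.11}) and using the finite total mass yields $\|Sf\|_A \le \|f\|_A + \bigl(\int\!\int \dd|g_{s,y}|\bigr)\|f\|_A \lesssim \|f\|_A$; the bound for $S^*$ follows identically, since $S^* = W_+^* W_-$ admits the same structure by symmetry. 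The main obstacle I anticipate is \emph{bookkeeping the measure-theoretic change of variables} rigorously: the coefficients $g_{s,y}$ are $\mc M_{loc}$-valued in $x$ and only $L^1$ in $y$ while singular in $s$, so one must justify Fubini across the mixed $L^1_y \mc M_s$ structure and verify that the pushforward of the product measure under $(s_1,s_2,y_1,y_2)\mapsto(s_1 s_2,\, s_1 y_2 + y_1)$ is well-defined and disintegrates correctly in the new variables. I would handle this by first establishing the formula on the dense class where the $g^{(j)}$ are genuine measures of finite total mass and then passing to the limit using the bounds above.
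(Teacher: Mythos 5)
Your reduction of $S=W_-^*W_+$ to the composition law for elementary transformations only proves the weaker statement that $S$ lies in the class $X$ of Theorem \ref{theorem_1.1}. Composing $f \mapsto g_1(x)f(s_1x+y_1)$ with $f \mapsto g_2(x)f(s_2x+y_2)$ produces the multiplier $g_2(x)\,g_1(s_2x+y_2)$, a genuine \emph{function of $x$}, not a constant; and already the linear terms $W_+-I$ and $W_-^*-I$ in your expansion $S=I+T_1+T_2+T_2T_1$ have $x$-dependent coefficients (the half-space cutoffs $\chi_{(-\infty,t/2)}(x\cdot\omega)$, $\chi_{(t/2,\infty)}(x\cdot\omega)$ in the paper's formula (\ref{2.100})), so they are not "immediately of the required form" either. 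Theorem \ref{thm_s} asserts more than membership in $X$: in (\ref{eqn1.11}) the coefficients $\dd g_{s,y}$ are scalar measures, independent of $x$, so that $S-I$ is built only from the pure affine isometries $f\mapsto f(sx+y)$. This gap is fatal for the second half of the theorem: the inequality $\|g(\cdot)f(s\cdot+y)\|_A \leq \|g\|_{L^\infty}\|f\|_A$ that you invoke is false for a general Banach space $A$ whose norm is merely isometry-invariant --- take $A=\dot W^{1,1}$, $C^1$, or a Lipschitz space, none of which is preserved by multiplication with a bounded function. Isometry invariance controls $\|f(s\cdot+y)\|_A$ but says nothing about multiplication operators; the $x$-independence of the coefficients is precisely the content of the theorem, and is why $S$, unlike $W_\pm$, commutes with $-\Delta$ and acts on Sobolev, Besov, and Lipschitz spaces.

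The paper obtains the $x$-independence from an algebraic cancellation that a term-by-term composition of the two wave operators never sees. One writes the kernel of $S$ as $T_S = I + (I+T_{1-})^{-1}\oast(T_{1+}-T_{1-})$, formula (\ref{2.152}), where $T_{1\pm}$ are the first-order kernels. Individually $T_{1\pm}$ carry the $x$-dependent cutoffs, but in the \emph{difference} they recombine into a full line integral,
$$
K_{1+}(x,t\omega)-K_{1-}(x,t\omega) = \int_\R \widehat V(s\omega)e^{-its/2}e^{is\omega\cdot x} s \dd s = (L_1+\tilde L_1)((t-2x\cdot\omega)\omega),
$$
whose associated measure $\dd g_{S_\omega,-y} = \delta_{\frac{y}{|y|}}(\omega)\,(L_1+\tilde L_1)(y)\,|y|^{-2}\dd y$ does not depend on $x$. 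The set $Y_0$ of kernels all of whose contractions have $x$-independent coefficients is a \emph{left ideal} in the algebra $(Y,\oast)$, so $(I+T_{1-})^{-1}\oast(T_{1+}-T_{1-})\in Y_0$, hence $S-I\in X_0$ and both claims of the theorem follow. To repair your argument you would need to exhibit this cancellation; composing the two structure formulas for the full wave operators produces multipliers $g_2(x)g_1(s_2x+y_2)$ with no visible mechanism forcing the sum of all terms to be constant in $x$.
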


\noindent{\bfseries Interpretation.} The structure formula for wave operators (\ref{eqn1.8}) translates into a similar one (\ref{eqn1.11}) for the scattering operator $S$. The main difference is that, while $W_\pm$ need not commute with $-\Delta$, $S$ always does.

Although in general elementary transformations need not commute with $-\Delta$, we prove that $S-I$ is constituted only of elementary transformations of the form $f(x) \mapsto f(s x + y)$. These affine isometries commute with $-\Delta$.

$S$ being an integrable combination of isometries can be written $S \in \mc M(\ISSO(3))$, where $\mc M(\ISSO(3))$ is the space of finite-mass Borel measures on the group of affine isometries on $\R^3$. $\mc M(\ISSO(3))$ has a natural algebra structure.

In particular, $S$ is bounded on all Banach spaces of functions on $\R^3$ which are invariant under affine isometries --- Sobolev, Besov, Lipschitz, and Lorentz spaces included.

We further interpret (\ref{eqn1.11}) as stating that when plane waves come from infinity and encounter the potential $V$, some portion is transmitted and the rest is reflected with a phase shift. The reflected part has the same frequency $|\xi|$ and energy $E = |\xi|^2$ as the original, but a different direction and phase.\\

\paragraph{\bfseries Regularity of wave operators.} Although, unlike $S$,  $W_{\pm}$ need not be constituted of isometries, their constitutive elementary transformations are also bounded on many function~spaces.
\begin{theorem}\lb{thm_general} Consider a Banach space $A$ of functions on $\set R^3$ such that the norm of $A$ is invariant under isometries and for any $\omega \in S^2$
\be\lb{1.12}
\|\chi_{\{x \mid x \cdot \omega \geq 0\}}(x) f(x)\|_A \les \|f\|_A.
\ee
If $V \in B$ is real-valued and zero is neither an eigenvalue, nor a resonance for $H=-\Delta+V$, then for every $f \in L^2 \cap A$
\be
\|W_{\pm} f\|_A + \|W_{\pm}^* f \|_A \les \|f\|_A.
\ee
\end{theorem}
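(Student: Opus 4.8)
The plan is to derive Theorem \ref{thm_general} directly from the structure formula (\ref{eqn1.8}) of Theorem \ref{theorem_1.1}, using the extra geometric hypothesis (\ref{1.12}) to control the one place where the elementary transformations constituting $W_{\pm}$ fail to be isometries. Recall that (\ref{eqn1.8}) writes $W = I + K$, where
\be
(Kf)(x) = \int_{\R^3}\Big(\int_{\ISO(3)} \dd g_{s,y}(x)\, f(sx+y)\Big)\dd y,
\ee
with the integrability bound $\int_{\R^3}\big(\int_{\ISO(3)}\dd\|g_{s,y}\|_{L^\infty_x}\big)\dd y < \infty$. For a fixed pair $(s,y)$, the associated elementary transformation is $f \mapsto g_{s,y}(x)\,f(sx+y)$. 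Since the $A$-norm is invariant under the affine isometry $x \mapsto sx+y$ (composition of the orthogonal map $s$ and the translation by $y$), the obstruction to bounding this elementary transformation on $A$ is exactly the multiplier $g_{s,y} \in L^\infty_x$.

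The key step is therefore to show that each multiplication operator $f \mapsto g_{s,y}(x) f(x)$ is bounded on $A$ with norm controlled by $\|g_{s,y}\|_{L^\infty_x}$, and then to integrate. The hypothesis (\ref{1.12}) is what supplies this: multiplication by a bounded function is not in general bounded on an isometry-invariant space $A$, but (\ref{1.12}) says that multiplication by the characteristic function of a half-space $\chi_{\{x\cdot\omega\ge 0\}}$ is bounded on $A$, uniformly in the direction $\omega \in S^2$. The plan is to inspect the proof of Theorem \ref{theorem_1.1} and observe that the coefficients $g_{s,y}(x)$ produced there are not arbitrary $L^\infty$ functions but are (up to the isometry-invariant pieces) built out of characteristic functions of half-spaces, or more precisely are bounded functions whose $A$-boundedness as multipliers reduces, via a bounded number of such cutoffs, to (\ref{1.12}). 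Concretely, I expect that the explicit $g_{s,y}$ appearing in the representation are of the form (bounded constant) $\times \chi_{\{x\cdot\omega\ge 0\}}$ for a direction $\omega$ depending on $(s,y)$, reflecting the physical fact that the wave operator acts differently on the incoming and outgoing half-spaces. Granting this, each elementary transformation satisfies
\be
\|g_{s,y}(x) f(sx+y)\|_A \les \|g_{s,y}\|_{L^\infty_x}\,\|f\|_A,
\ee
using first (\ref{1.12}) to absorb the cutoff and then the isometry-invariance of $\|\cdot\|_A$ to remove the change of variables.

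Assembling the bound, I would apply Minkowski's integral inequality (the $A$-norm of an $A$-valued integral is at most the integral of the $A$-norms) to write
\be
\|Kf\|_A \les \int_{\R^3}\Big(\int_{\ISO(3)}\dd\|g_{s,y}\|_{L^\infty_x}\Big)\dd y\;\|f\|_A \les \|f\|_A,
\ee
the finiteness of the double integral being exactly the conclusion of Theorem \ref{theorem_1.1}. Hence $\|Wf\|_A \le \|f\|_A + \|Kf\|_A \les \|f\|_A$, and the same argument applied to the representation for $W_{\pm}^*$ gives the bound for the adjoints. One technical point deserves care: (\ref{eqn1.8}) is stated for $f \in L^2$, so the estimate is first established for $f \in L^2 \cap A$ as in the statement, and the Minkowski step requires $A$ to be a Banach space with a suitable Fubini/measurability structure for the $(s,y)$-integral, which is available since $g_{s,y}$ is a measure in $(s,y,x)$ with the stated integrability.

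The main obstacle I anticipate is precisely the claim that the coefficients $g_{s,y}(x)$ are controlled by half-space cutoffs rather than being general $L^\infty$ multipliers; a bald $L^\infty$ bound is not enough, since multiplication by a general bounded function need not preserve $A$ (e.g.\ Sobolev spaces). Verifying the half-space structure requires tracing the construction of $g_{s,y}$ through the proof of Theorem \ref{theorem_1.1}—in particular identifying that the only non-isometric ingredient entering the kernel is a sharp cutoff separating the region already traversed by the wave from the region ahead of it. This is the genuinely novel content of Theorem \ref{thm_general} beyond Theorem \ref{theorem_1.1}, and it is where hypothesis (\ref{1.12}) is indispensable; everything downstream of it is the routine Minkowski integration sketched above.
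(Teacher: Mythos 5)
Your overall strategy is genuinely aligned with the paper's: the affine part $x \mapsto sx+y$ is absorbed by isometry invariance, the multiplier $g_{s,y}$ is the sole obstruction, hypothesis (\ref{1.12}) is what controls multiplication by half-space cutoffs, and Minkowski's integral inequality finishes. You have also correctly located the crux --- that the coefficients must be built from half-space cutoffs rather than be general $L^\infty$ functions --- and for the first Born term this is exactly what happens: by (\ref{2.100}), $g_{S_\omega,-y}(x)$ is a combination of $\chi_{(-\infty,|y|/2)}(x\cdot\omega)$ and $\chi_{(|y|/2,\infty)}(x\cdot\omega)$ with scalar coefficients $L_1(y)$, $\tilde L_1(y)$.

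The genuine gap is at your declared key step: the half-space structure of the coefficients of the \emph{full} wave operator cannot be obtained by ``inspecting'' or ``tracing'' the proof of Theorem \ref{theorem_1.1} after the fact. That proof is run in the algebras $X$, $Y$ whose coefficient class is all of $L^\infty$, so its output is only an $L^\infty$-coefficient representation; the explicit cutoff structure is visible only for $T_{1+}$. The full kernel $T_+$ is produced from $T_{1+}$ by the Wiener inversion --- Neumann series, frequency localization, local Fredholm inverses, and limits taken in the $X$/$Y$ norms. The $\ell$-th term of such a series has coefficients that are products of roughly $\ell$ cutoffs in $\ell$ different rotated and translated frames; each such product is indeed a bounded multiplier on the space of the theorem (compose (\ref{1.12}) $\ell$ times), but its multiplier norm may grow like $C^\ell$, while the series converges only in the $L^\infty$-coefficient norm, which does not control multiplier norms on the target space --- so no uniform bound survives the limit. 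This is exactly why the paper does not post-process Theorem \ref{theorem_1.1} but re-runs its entire proof in refined algebras $X_A$, $Y_A$ built on the coefficient space $A$ of (\ref{aaa}) (integrable combinations over $\omega \in S^2$ of bounded-variation functions of $x\cdot\omega$ --- note: integrable combinations, not the single cutoffs you predict). That re-run needs new input that your proposal does not supply: that $X_A$, $Y_A$ are Banach algebras under $\oast$ and that $T_{1+}\in Y_A$; the compactness of $\mc F_y T_{1+}(\eta)$ on $A$ (Lemma \ref{TFA}), so that Fredholm's alternative yields invertibility of $I+\mc F_y T_{1+}(\eta)$ in $\B(A,A)$ rather than merely in $\B(L^\infty,L^\infty)$; and the continuity and decay preconditions (\ref{2.73}), (\ref{2.74}) measured in the $Y_A$ norm (the paper observes that Lemma \ref{lm2.13} already proves these in the required stronger form). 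Once $W_\pm, W_\pm^*\in X_A$ is established, your concluding Minkowski step is correct and does finish the proof.
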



This applies, directly or indirectly, to several Sobolev or Besov spaces, such as
$$
\dot W^{1, 1} = \{f \mid |\dl f| \in L^1\},\ \dot W^{1, \mc M} = \{f \mid |\dl f| \in \mc M\},
$$
as well as the space $C_b$ of continuous bounded functions:
\begin{corollary}\lb{cor_sobolev} Let $V \in B$ be real-valued, such that zero is neither an eigenvalue, nor a resonance for $H$. Then $W_{\pm}$ and $W_{\pm}^*$ are bounded on $\dot W^{1, 1}$, on $\dot W^{1, \mc M}$, on $\dot H^s$, $s \in [0, 1/2)$, on $\dot W^{s, p}$, $s \in [0, 1/p)$, for $1<p<\infty$, and on $C_b$.
\end{corollary}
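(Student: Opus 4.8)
The plan is to deduce Corollary~\ref{cor_sobolev} from Theorem~\ref{thm_general} on every space where its two hypotheses hold, and to treat the critical endpoint spaces by a separate argument resting on the representation formula (\ref{eqn1.8}). Theorem~\ref{thm_general} demands of the space $A$ only isometry invariance of the norm and the half-space cutoff bound (\ref{1.12}). For all the spaces listed the norm is built from $(-\Delta)^{s/2}$ (or from the supremum), which commutes with rotations and translations while the ambient $L^p$, $\mc M$, or sup norm is itself isometry invariant; hence the first hypothesis is automatic throughout. The whole content is therefore to verify (\ref{1.12}), namely that multiplication by the indicator $\chi_{\{x\cdot\omega\ge 0\}}$ of a half-space is bounded on $A$, uniformly in $\omega\in S^2$.

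\emph{Subcritical Sobolev spaces.} For $A=\dot H^s$ with $0\le s<1/2$ and for $A=\dot W^{s,p}$ with $1<p<\infty$ and $0\le s<1/p$, the case $s=0$ is trivial, and for $s>0$ the estimate (\ref{1.12}) is precisely the classical fact that the characteristic function of an affine half-space is a pointwise multiplier on $W^{s,p}$ exactly in the subcritical range $s<1/p$; since this indicator is $0$-homogeneous, the same bound transfers to the homogeneous norms. The borderline $s=1/p$ is genuinely excluded, the obstruction being the jump of $\chi f$ across the hyperplane $\{x\cdot\omega=0\}$. With (\ref{1.12}) in hand, Theorem~\ref{thm_general} applies directly and gives the boundedness of $W_{\pm}$ and $W_{\pm}^*$ on these spaces.

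\emph{Endpoint spaces $\dot W^{1,1}$ and $\dot W^{1,\mc M}$.} Here $s=1/p=1$ is critical and (\ref{1.12}) fails: multiplication by $\chi_{\{x\cdot\omega\ge 0\}}$ produces a surface contribution $f\,\delta_{\{x\cdot\omega=0\}}$ in the gradient, which is singular and of infinite mass, so Theorem~\ref{thm_general} is unavailable. Instead I would argue through (\ref{eqn1.8}) by establishing a commutation identity
\be
\nabla W_{\pm} = \mc W_{\pm}\,\nabla, \qquad \nabla W_{\pm}^* = \mc W_{\pm}^*\,\nabla,
\ee
where $\mc W_{\pm}$ acts on $\R^3$-valued fields and is again an integrable combination of isometries, hence bounded on $(L^1)^3$ and on $(\mc M)^3$ by the $p=1$ instance of Theorem~\ref{theorem_1.1}; this yields $\|\nabla W_{\pm} f\|_{L^1}=\|\mc W_{\pm}\nabla f\|_{L^1}\les\|\nabla f\|_{L^1}$ and likewise for $\mc M$. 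To produce $\mc W_{\pm}$ one differentiates (\ref{eqn1.8}) in $x$: the derivative falling on the factor $f(sx+y)$ gives $\sum_j s_{ji}(\partial_j f)(sx+y)$, i.e.\ an isometry acting on $\nabla f$, while the derivative falling on the $x$-dependent kernel $g_{s,y}(x)$ is handled by trading $\partial_{x_i}f(sx+y)$ against $\partial_{y_i}f(sx+y)$ and integrating by parts in $y$, thereby moving the derivative back onto $f$.

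\emph{The space $C_b$, and the main obstacle.} Boundedness of $W_{\pm},W_{\pm}^*$ on $L^\infty$ is already (\ref{1.9}), so for $C_b$ only preservation of continuity is at issue; this I would obtain by showing that the $y$-averaging structure of the integral term in (\ref{eqn1.8}), together with the uniform-in-$x$ mass bound $\int_{\R^3}\int_{\ISO(3)}\dd\|g_{s,y}\|_{L^\infty_x}\dd y<\infty$, makes $x\mapsto (W_\pm f)(x)$ continuous by dominated convergence, even though the individual multipliers $g_{s,y}(x)$ are merely bounded. The genuinely new point, and the step I expect to be hardest, is the endpoint $\dot W^{1,1}$ (and $\dot W^{1,\mc M}$): because no regularity of $V$ is assumed, one cannot differentiate the Duhamel expansion (\ref{2.2}) term by term, as this would create uncontrolled factors $\nabla V$, so the argument must stay at the level of the representation formula. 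The difficulty concentrates in the term coming from $\partial_{x_i}g_{s,y}(x)$ — the same object responsible for the failure of (\ref{1.12}) — and the crux is to show that after the $y$-integration by parts it reorganizes into an integrable combination of isometries acting on $\nabla f$, rather than leaving an uncancelled hyperplane-supported singular term. Carrying this out requires the explicit hyperplane-reflection structure of $g_{s,y}(x)$ furnished by the proof of Theorem~\ref{theorem_1.1}.
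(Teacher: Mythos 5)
The subcritical part of your argument ($\dot H^s$, $s<1/2$, and $\dot W^{s,p}$, $s<1/p$) is sound and is essentially the paper's route: verify (\ref{1.12}) and invoke Theorem \ref{thm_general} (the paper proves the half-space multiplier bound rather than citing it, via the anisotropic equivalence of norms, the Mihlin multiplier $|\xi|^s/(|\xi_\omega|^s+|\xi_\perp|^s)$, and a one-dimensional Leibniz rule resting on $A_2$-weight theory for the Hilbert transform). But your treatment of the remaining three spaces has genuine gaps. First, your claim that (\ref{1.12}) fails on $\dot W^{1,\mc M}$ is false: the surface term $f\,\delta_{\{x\cdot\omega=0\}}$ is singular but of \emph{finite} mass, since $\|f\,\delta_{\{x\cdot\omega=0\}}\|_{\mc M}\le\|f\|_{\mc M_{x_\perp}L^\infty_{x_\omega}}\les\|f\|_{\dot W^{1,\mc M}}$ (using the one-dimensional embedding $\dot W^{1,\mc M}(\R)\subset L^\infty$), and singular measures are exactly what $\dot W^{1,\mc M}$ tolerates. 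This is precisely how the paper handles $\dot W^{1,\mc M}$: it checks (\ref{1.12}) and applies Theorem \ref{thm_general} directly; no commutation identity is needed there.

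The genuine difficulty is $\dot W^{1,1}$, and there your proposal stops exactly where the work begins. The fix you suggest --- trading $\partial_{x_i}f(sx+y)$ for $\partial_{y_i}f(sx+y)$ and integrating by parts in $y$ --- does not work as stated, because $g_{s,y}$ is singular in $y$ as well (for $W_{1+}$ it is supported on the half-lines $y=-t\omega$), so moving a $y$-derivative onto it produces a worse object, not a better one. Moreover, the target you set (the kernel derivative reorganizes into isometries acting on $\nabla f$) is not what is true. The paper's mechanism is different on both counts: (i) it splits $W_+=W_{1+}+(W_+-W_{1+})$ and uses that $T_+-T_{1+}=-(I+T_{1+})^{-1}\oast T_{1+}\oast T_{1+}$ inherits the continuity property (\ref{2.73}), hence is well approximated by convolutions in $y$ and maps $\dot W^{1,1}$ into itself, so only the explicit first-order term needs the hyperplane analysis; (ii) for $W_{1+}$, the deltas $\delta_{t/2}(x\cdot\omega)$ produced by differentiating the half-space cutoffs, once integrated over the parameter $t$, sweep out the \emph{absolutely continuous} density $\frac12|L_1(2x\cdot\omega)-\tilde L_1(2x\cdot\omega)|\,\chi_{[0,\infty)}(x\cdot\omega)\,|f(x)|$, which is estimated in $L^1$ by the trace bound $\|f\|_{L^1_{x_\perp}L^\infty_{x_\omega}}\les\|f\|_{\dot W^{1,1}}$ --- i.e.\ the singular contribution is controlled by $|f|$ itself, not by isometries applied to $\nabla f$, so the identity $\nabla W_\pm=\mc W_\pm\nabla$ with $\mc W_\pm$ an integrable combination of isometries is the wrong target and remains unproven. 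Finally, your $C_b$ argument also fails: dominated convergence cannot be run from (\ref{eqn1.8}) alone, because the multipliers $g_{s,y}(x)$ are genuinely discontinuous in $x$ (they are half-space cutoffs), and membership in $X$ does not force preservation of continuity (consider $g_{s,y}(x)=\chi_E(x)\delta_I(s)h(y)$ with $E$ an arbitrary measurable set, which gives $Wf(x)=\chi_E(x)\int h(y)f(x+y)\dd y$). The paper instead approximates $V$ in $B$ by $V_n\in B\cap L^{3/2+\epsilon}$, uses Corollary \ref{cor_sobolev_mare} to get boundedness of $W_+^n$ on $W^{2,3/2+\epsilon}\subset C_b$, and passes to the limit using the norm continuity of the wave operators as functions of the potential.
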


We also obtain that, given enough regularity of $V$, $W_\pm$ and $W_\pm^*$ are bounded on Sobolev spaces of any~order.
\begin{corollary}\lb{cor_sobolev_mare} Let $V \in B$ be real-valued, such that zero is neither an eigenvalue, nor a resonance for $H$. For $s \in \R$ and $1<p<\infty$, assume that
\be\lb{cond_V}
V \in \left\{\begin{aligned}
&B, && |s|<2,\ sp<3 \\
&B \cap L^{3/2+\epsilon},\ && |s|<2, sp=3 \\
&B \cap L^p, && |s|<2, sp>3 \\
&B \cap W^{|s|-2, 1} \cap W^{|s|-2, p+\epsilon}, && |s| \geq 2.
\end{aligned}\right.
\ee
Then $W_{\pm}$ and $W_{\pm}^*$ are bounded on $W^{s, p}$.
\end{corollary}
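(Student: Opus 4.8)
The plan is to reduce the $W^{s,p}$ bound to a statement about $H=-\Delta+V$ alone --- namely, the equivalence of the Sobolev norms built from $H$ and from $H_0=-\Delta$ --- and then to prove that equivalence perturbatively. Write $\langle H\rangle = 1+H$ and $\langle H_0\rangle = 1-\Delta$, so that $\|f\|_{W^{s,p}} = \|\langle H_0\rangle^{s/2} f\|_{L^p}$. The intertwining identities $\langle H\rangle^{s/2} W_\pm = W_\pm \langle H_0\rangle^{s/2}$ and $\langle H_0\rangle^{s/2} W_\pm^* = W_\pm^* \langle H\rangle^{s/2}$ on $P_c L^2$ give
$$\|W_\pm f\|_{W^{s,p}} = \big\|\langle H_0\rangle^{s/2}\langle H\rangle^{-s/2}\, W_\pm\, \langle H_0\rangle^{s/2} f\big\|_{L^p}.$$
Since $W_\pm$ is $L^p$-bounded by Theorem \ref{theorem_1.1} and $\|\langle H_0\rangle^{s/2} f\|_{L^p} = \|f\|_{W^{s,p}}$, the corollary follows once $\langle H_0\rangle^{s/2}\langle H\rangle^{-s/2}$ is bounded on $L^p$; the bound for $W_\pm^*$ requires the reverse operator $\langle H\rangle^{s/2}\langle H_0\rangle^{-s/2}$. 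Thus I would reduce everything to the two-sided norm equivalence $\|\langle H\rangle^{s/2} f\|_{L^p}\sim\|f\|_{W^{s,p}}$. Negative $s$ then follows by duality, pairing $W^{s,p}$ with $W^{-s,p'}$ and exchanging $W_\pm$ with $W_\pm^*$, which is consistent with (\ref{cond_V}) depending only on $|s|$; so one may assume $s\geq 0$.

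Next I would establish the norm equivalence. By Stein's analytic interpolation in $s$ it suffices to treat $s$ near each integer, the base case $s=0$ being trivial. The engine is the resolvent identity $\langle H\rangle^{-1} = \langle H_0\rangle^{-1} - \langle H_0\rangle^{-1} V \langle H\rangle^{-1}$ together with the subordination formula $\langle H\rangle^{-s/2} = c_s\int_0^\infty \lambda^{-s/2}(\langle H\rangle+\lambda)^{-1}\dd\lambda$, valid for $0<s<2$. Substituting the resolvent identity, the leading term reconstitutes the identity and the correction expresses $\langle H_0\rangle^{s/2}\langle H\rangle^{-s/2}-I$ as a $\lambda$-integral of operators carrying one factor of $V$ between free resolvents. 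Its $L^p$-boundedness reduces to a multiplier estimate: $\langle H_0\rangle^{-1}$ maps $L^p$ into $W^{2,p}$, the Sobolev embedding gives $W^{2,p}\hookrightarrow L^q$, and H\"older then requires only $V\in L^r$ with $1/r$ fixed by whether $sp<3$, $sp=3$, or $sp>3$. This reproduces exactly the integrability demanded in (\ref{cond_V}): $V\in B\subset L^{3/2,1}$, $V\in L^{3/2+\epsilon}$, respectively $V\in L^p$. For $|s|\geq 2$ the fractional power runs past the first correction and, after writing $H^k=(-\Delta+V)^k$ and applying Leibniz, derivatives fall on $V$; matching orders shows one needs precisely $|s|-2$ derivatives of $V$, with an $L^1$-type norm to control the endpoint contribution inherited from the structure formula (\ref{eqn1.8}) and an $L^{p+\epsilon}$-type norm to close the interior Sobolev-multiplier bounds --- exactly $V\in W^{|s|-2,1}\cap W^{|s|-2,p+\epsilon}$.

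The main obstacle is the bookkeeping of the multiplier and embedding estimates at the thresholds, and, for $|s|\geq 2$, the verification that only $|s|-2$ derivatives of $V$ are consumed. The heuristic behind the latter is that the coefficient $g_{s,y}$ in (\ref{eqn1.8}) is two orders smoother than $V$, being built from $V$ through operators of order $-2$, so that differentiating the structure formula up to order $k=|s|$ requires $V$ differentiable only up to order $k-2$; turning this into rigorous bounds, uniformly in $(s,y)$ and across the critical exponents $sp=3$, is the delicate point. One must also invoke the hypothesis that zero is neither an eigenvalue nor a resonance of $H$ to justify the fractional powers and the resolvent expansions near the threshold $\lambda=0$, exactly as in the construction underlying Theorem \ref{theorem_1.1}. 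For the low range $s\in[0,1/p)$ the cleaner route of Theorem \ref{thm_general} is available, since the half-space condition (\ref{1.12}) holds there; it fails once $s\geq 1/p$, which is precisely why the functional-calculus comparison above is needed to reach all $s$.
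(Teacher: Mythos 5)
Your opening reduction is exactly the paper's: use the intertwining property (\ref{1.4}) plus the $L^p$-boundedness from Theorem \ref{theorem_1.1} to reduce the corollary to the equivalence of the Sobolev norms generated by $H$ and by $H_0$. The gap is in how you then prove that equivalence. After inserting the resolvent identity into the subordination formula, the correction term is \emph{not} ``one factor of $V$ between free resolvents'': it is
$$c_s \int_0^\infty \lambda^{-s/2}\,\langle H_0\rangle^{s/2}(\langle H_0\rangle+\lambda)^{-1}\, V\, (\langle H\rangle+\lambda)^{-1}\, \dd\lambda,$$
which carries the \emph{perturbed} resolvent $(\langle H\rangle+\lambda)^{-1}$ (avoiding it would require iterating the identity into a Born series, which diverges for large $V$). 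Mihlin, H\"older and Sobolev embedding control only the free factors; the $L^p$-boundedness of $(\langle H\rangle+\lambda)^{-1}$, uniformly in $\lambda\in(0,\infty)$ and with decay as $\lambda\to\infty$, is precisely the statement you are trying to prove, and it does not follow from $V\in L^r$. Worse, as $\lambda$ ranges over $(0,\infty)$ the point $-(1+\lambda)$ sweeps through the negative eigenvalues of $H$, so $\langle H\rangle+\lambda$ is not even invertible on $L^2$ at those values; one must compose with $P_c$, track the pole cancellations against $P_p$, and then still produce uniform Fredholm-type inverses of $I+VR_0(-1-\lambda)$ at \emph{every} $\lambda$. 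None of this is addressed, and it is the heart of the matter. (Relatedly, the spectral hypothesis you invoke is misplaced: the zero-eigenvalue/resonance condition feeds Theorem \ref{theorem_1.1}; what the norm-equivalence step needs is that $-1$ is not an eigenvalue of $H$, which the paper arranges without loss of generality.)

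The paper sidesteps all of this by never introducing a $\lambda$-integral. It reduces (\ref{equiv}) by interpolation to the single algebraic identity $(H+1)=(I+VR_0(1))(H_0+1)$, so that the whole problem becomes the boundedness and invertibility of the \emph{one} operator $I+VR_0(1)$ on $W^{\sigma,p}$, $0\le\sigma\le\max(0,s-2)$. Boundedness is the multiplication estimate (\ref{leib}), $\|Vg\|_{W^{\sigma,p}}\les\|g\|_{W^{\sigma+2,p}}$, proved by Sobolev embedding when $\sigma=0$ and the fractional Leibniz rule when $\sigma>0$ --- this is exactly where the hypotheses (\ref{cond_V}) enter, rather than through any heuristic about $g_{s,y}$ being ``two orders smoother than $V$,'' which plays no role in the paper's argument and is not a proof for your $|s|\geq 2$ case either. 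Invertibility is a single application of Fredholm's alternative (using compactness of $VR_0(1)$ and the absence of an eigenvalue at $-1$), in place of your one-parameter family of inversions. Your proposal could in principle be completed by proving uniform resolvent bounds along the subordination contour, but that is a substantially harder route and the proposal neither recognizes nor supplies it.
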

The proof of Corollary \ref{cor_sobolev_mare} uses the intertwining property (\ref{1.4}) and elliptic regularity for the free Hamiltonian $H_0=-\Delta$. One can further relax (\ref{cond_V}). The reader is directed to proof of Corollary \ref{cor_sobolev_mare} for sharper conditions under which this conclusion holds.\\

\paragraph{\bfseries Weighted spaces.} $W_{\pm}$ are bounded on weighted $L^p$ spaces, with weights up to $\langle x \rangle^{-1+\epsilon}$. Due to the introduction of inhomogenous weights, this result is no longer scaling-invariant.
\begin{corollary}\lb{wei}
Let $V$ be real-valued with zero being neither an eigenvalue, nor a resonance for $H=-\Delta+V$. Assume that $V \in \langle x \rangle^{-\alpha} L^2$, $1/2< \alpha<3/2$. Then $W_{\pm}$ and $W_{\pm}^*$ are bounded on $\langle x \rangle^\beta L^p$, $1 \leq p \leq \infty$, for $|\beta| < \alpha-1/2$.
\end{corollary}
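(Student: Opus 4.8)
The plan is to derive the weighted bound directly from the structure formula (\ref{eqn1.8}), reducing it to a weighted integrability statement for the coefficients $g_{s,y}$. Write $W = I + T$, where $W$ is any of $W_\pm, W_\pm^*$ and
$$ (Tf)(x) = \int_{\R^3}\int_{\ISO(3)} f(sx+y)\,\dd g_{s,y}(x)\,\dd y. $$
The identity is an isometry of every $\langle x\rangle^\beta L^p$, so only $T$ needs attention. The key geometric observation is that, since $s\in\ISO(3)$ preserves length, $|sx|=|x|$, whence $\langle x\rangle \les \langle sx+y\rangle\langle y\rangle$ and $\langle sx+y\rangle \les \langle x\rangle\langle y\rangle$; in either sign of $\beta$ this yields the pointwise weight comparison $\langle x\rangle^{-\beta}\langle sx+y\rangle^{\beta} \les \langle y\rangle^{|\beta|}$. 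Setting $g := \langle\cdot\rangle^{-\beta} f$, so that $\|f\|_{\langle x\rangle^\beta L^p} = \|g\|_{L^p}$, and conjugating the weight through $T$, Minkowski's integral inequality together with the measure-preserving change of variables $x\mapsto sx+y$ (valid for all $1\le p\le\infty$ because $|\det s|=1$) gives
$$ \|W f\|_{\langle x\rangle^\beta L^p} \les \|f\|_{\langle x\rangle^\beta L^p}\Big(1 + \int_{\R^3}\langle y\rangle^{|\beta|}\int_{\ISO(3)}\dd\|g_{s,y}\|_{L^\infty_x}\,\dd y\Big). $$
Thus the corollary follows, uniformly in $p$ and simultaneously for $W_\pm$ and $W_\pm^*$, once the weighted coefficient integral on the right is shown to be finite for $|\beta| < \alpha - 1/2$.

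The remaining task is therefore to upgrade Theorem \ref{theorem_1.1} to the weighted statement
$$ \int_{\R^3}\langle y\rangle^{\gamma}\int_{\ISO(3)}\dd\|g_{s,y}\|_{L^\infty_x}\,\dd y < \infty, \qquad \gamma := |\beta| < \alpha - \tfrac12. $$
I would obtain this by rerunning the construction of $g_{s,y}$ from the proof of Theorem \ref{theorem_1.1} in the weighted function space, tracking how the weight $\langle y\rangle^\gamma$ on the output translates into a weight on the potential. In the Born series (\ref{2.2}) the translation $y$ produced by the $n$-th term $W_{n+}$ is controlled by the locations $x_1,\dots,x_n \in \supp V$ of the successive scattering events, so that $\langle y\rangle \les \prod_{j} \langle x_j\rangle$ and hence $\langle y\rangle^\gamma \les \prod_j \langle x_j\rangle^\gamma$ for $\gamma\ge 0$. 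Distributing this weight places one factor $\langle\cdot\rangle^\gamma$ on each copy of $V$, so that the weighted coefficient for $V$ is dominated by the unweighted coefficient for the modified potential $\langle\cdot\rangle^\gamma V$. The hypothesis closes the loop: $V\in\langle x\rangle^{-\alpha}L^2$ with $\gamma < \alpha - 1/2$ gives $\langle\cdot\rangle^\gamma V \in \langle x\rangle^{-(\alpha-\gamma)}L^2 \subset B$ because $\alpha-\gamma > 1/2$, so the modified potential still lies in $B$ and Theorem \ref{theorem_1.1} applies to it. (The baseline hypothesis $V\in B$ likewise holds since $\alpha>1/2$.)

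The main obstacle is to carry the weight through the abstract Wiener-type summation (Theorem \ref{thm7}) used to sum (\ref{2.2}), rather than through any single term, since the series may diverge. Concretely, one must verify that the weighted norm of the $n$-th term is still governed by the $n$-th power of $\|\langle\cdot\rangle^\gamma V\|_B$ (up to the constants appearing in (\ref{1.7})), so that the same summation mechanism applies in the weighted space with $\langle\cdot\rangle^\gamma V$ in place of $V$. The crude product bound $\langle y\rangle^\gamma \les \prod_j\langle x_j\rangle^\gamma$ over-weights but loses nothing for convergence, because Theorem \ref{thm7} accommodates potentials of arbitrary size; the delicate point is merely to confirm that attaching $\langle\cdot\rangle^\gamma$ to every factor of $V$ respects the algebraic structure of the space in which Theorem \ref{thm7} is applied, so that the weighted estimate inherits the convergence of the unweighted one. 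Finally, the restriction $\alpha < 3/2$ corresponds exactly to admissible weights $|\beta| < \alpha - 1/2 < 1$, that is, up to $\langle x\rangle^{-1+\epsilon}$ as stated; the loss of scaling invariance is inherent to the inhomogeneous weight $\langle x\rangle^\beta$.
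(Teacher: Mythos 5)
Your first step is correct and matches the paper's endgame: once the coefficients satisfy the weighted bound $\int_{\R^3}\langle y\rangle^{|\beta|}\int_{\ISO(3)}\dd\|g_{s,y}\|_{L^{\infty}_x}\dd y<\infty$ (i.e.\ membership in the weighted class $X_\beta$ of (\ref{2.13})), boundedness on $\langle x\rangle^{\beta}L^p$ for all $1\le p\le\infty$ follows; the paper gets this by interpolating between $p=1$ and $p=\infty$, and your Peetre-inequality-plus-Minkowski argument is a fine direct substitute. The outer skeleton you then describe --- obtain the weighted coefficient bound by rerunning the Wiener-type inversion in a weighted algebra --- is also exactly the paper's plan: it repeats the proof of Theorem \ref{theorem_1.1} inside $Y_\beta$, which is a Banach algebra by Lemma \ref{yalg}.

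The genuine gap is the mechanism you propose for feeding the weight into that machinery. The irreducible technical input is a weighted estimate for the single-interaction kernel $T_{1+}$, i.e.\ for $L_1(t\omega)=\int_0^\infty\widehat V(s\omega)e^{-its/2}s\,\dd s$, and it cannot be obtained by majorizing the weighted coefficients of $V$ by the unweighted coefficients of the modified potential $\langle\cdot\rangle^\gamma V$. The weight $\langle t\rangle^\gamma$ is Fourier-dual to smoothness in the radial frequency $s$, so weights in the translation variable correspond to (fractional) $s$-derivatives of $\widehat V(s\omega)s$; the paper converts this into norms by Plancherel, integration by parts, and interpolation (Lemma \ref{lema2.9}, estimate (\ref{2.81bis})), followed by Cauchy--Schwarz, which is precisely where the constraint $\beta<\alpha-1/2$ enters. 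There is no pointwise or $L^1$-level domination of $\langle t\rangle^\gamma L_1^{V}$ by $L_1^{\langle\cdot\rangle^\gamma V}$; indeed your geometric heuristic $\langle y\rangle\les\prod_j\langle x_j\rangle$ fails even as a support statement, since for compactly supported $V$ the coefficient $L_1(t\omega)$ is supported on all of $t\ge0$ and only its \emph{decay rate} reflects the decay of $V$. Moreover, even granting a term-by-term domination, the reduction cannot interface with the summation: Theorem \ref{theorem_1.1} applied to $\langle\cdot\rangle^\gamma V$ presupposes that $-\Delta+\langle x\rangle^\gamma V$ has no zero-energy eigenvalue or resonance, a condition neither assumed nor implied by the hypothesis on $-\Delta+V$; and the operator that must actually be inverted in the weighted algebra is $I+T_{1+}[V]$, whose pointwise invertibility (Lemma \ref{lemma2.3}) is the spectral condition for $V$ itself, so Born terms of a different potential cannot be substituted, nor can term-by-term majorization be pushed through a non-absolutely-convergent resummation. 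What is missing, and what your proposal does not supply, is exactly (\ref{eq2.84}) for $n=1$, namely $\|T_{1+}\|_{Y_\beta}\les\|V\|_{\langle x\rangle^{-\alpha}L^2}$ for $0\le\beta<\alpha-1/2$; with that in hand, the algebra property of $Y_\beta$ and the Wiener argument complete the proof as in the paper.
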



One can also obtain wave operator estimates on weighted Sobolev spaces.\\

\paragraph{\bfseries Asymptotic expansion.} Besides results that hold under scaling-invariant conditions on $V$, we retrieve an asymptotic expansion of the wave operators $W_\pm$ when $V$ decays rapidly.
\begin{proposition}\lb{asimptotic}
Assume that $V \in \langle x \rangle^{-3/2-\epsilon} L^2$ and that $H$ has no resonance or eigenstate at zero. Then, for $g_{s, y}(x)$ as in (\ref{eqn1.8}), there exists $g^1_{s, y}(x) \equiv g^1_{s, y}$ constant in $x$ such that
$$
g_{s, y}(x)-g^1_{s, y}(x) \in \langle y \rangle^{-1-\epsilon} L^1_y \mc M_s L^{\infty}_x,\ g^1_{s, y}(x) \in \langle y \rangle^{-4} L^{\infty}_y \mc M_s L^{\infty}_x.
$$
\end{proposition}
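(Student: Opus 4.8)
The plan is to read off the leading coefficient from the first term $W_{1+}$ of the Born series (\ref{2.2}) and to bury everything else in the remainder, using the extra decay $V\in\langle x\rangle^{-3/2-\epsilon}L^2$ to quantify the gain; I carry this out for $W_+$, the other three operators being handled identically. The guiding principle is that the leading, $\langle y\rangle^{-4}$–sized contribution to $g$ comes from the part of $W_{1+}$ that preserves the energy $|\xi|$ and hence commutes with $-\Delta$; once this part is split off, the coefficient it produces is a superposition of the affine isometries $f\mapsto f(sx+y)$ and so is constant in $x$.

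First I would compute $W_{1+}=i\int_0^\infty e^{-it\Delta}Ve^{it\Delta}\dd t$ explicitly. Inserting the free kernels and performing the (regularized) time integral gives, on the Fourier side,
\[
\widehat{W_{1+}f}(\xi)=c\int_{\R^3}\frac{\hat V(\xi-\eta)}{|\eta|^2-|\xi|^2-i0}\,\hat f(\eta)\dd\eta,
\]
equivalently the position-space kernel $W_{1+}(x,z)=c'\int_{\R^3}V(w)\,(|z-w|^2-|x-w|^2+i0)^{-2}\dd w$. Writing $(|\eta|^2-|\xi|^2-i0)^{-1}=\text{p.v.}(|\eta|^2-|\xi|^2)^{-1}+i\pi\delta(|\eta|^2-|\xi|^2)$ splits $W_{1+}$ into an on-shell operator $W_{1+}^{os}$, carrying the Dirac term, and an off-shell remainder. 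Since $W_{1+}^{os}$ couples $\eta$ to $\xi$ only when $|\eta|=|\xi|$, it commutes with $-\Delta$; the principal-value part genuinely mixes energies, depends on $x$, and will be absorbed into $g-g^1$.

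Next I would put $W_{1+}^{os}$ into the form (\ref{eqn1.8}). Restricting the $\eta$–integral to $\{|\eta|=|\xi|\}$, parametrizing $\eta=s\xi$ by $s\in\ISO(3)$ against Haar measure (integrating out the stabilizer of $\xi$), and using $\widehat{f(s\cdot+y)}(\xi)=e^{iy\cdot s\xi}\hat f(s\xi)$, the change of variables $\zeta=s\xi$ identifies
\[
g^1_{s,y}=c\int_{\R^3}e^{-iy\cdot\zeta}\,|\zeta|\,\hat V\big((s^*-I)\zeta\big)\dd\zeta,
\]
which is manifestly independent of $x$, as required; the operator it defines is a combination of affine isometries and hence commutes with $-\Delta$. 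For the decay I would use that in $\R^3$ the Fourier transform of the homogeneous symbol $|\zeta|$ is $\sim|y|^{-4}$: since $\langle x\rangle^{-3/2-\epsilon}L^2\subset L^1$, $\hat V$ is continuous at the origin, so near $\zeta=0$ the integrand is $\hat V(0)|\zeta|+O(|\zeta|^2)$ and the conical singularity of $|\zeta|$ at the origin governs the large-$y$ behaviour, giving $\int_{\ISO(3)}\dd|g^1_{s,y}|\les\langle y\rangle^{-4}$. The $L^{2,1}$ refinement of the hypothesis is exactly what makes the $\zeta$–integral converge uniformly in $s\in\ISO(3)$, so that $g^1$ exists as claimed.

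Finally I would control the remainder $g-g^1$, which consists of the off-shell part of $W_{1+}$ together with all higher Born terms $W_{n+}$, $n\ge2$. Here I would re-run the construction behind Theorem \ref{theorem_1.1} and the summation of Theorem \ref{thm7}, but tracking weights: the claim is that strengthening $V\in B$ to $V\in\langle x\rangle^{-3/2-\epsilon}L^2$ --- one additional power of decay --- upgrades the conclusion $g\in L^1_y\mc M_sL^\infty_x$ to $g-g^1\in\langle y\rangle^{-1-\epsilon}L^1_y\mc M_sL^\infty_x$, the extra $\langle y\rangle^{-1-\epsilon}$ weight being the Fourier-dual counterpart of the extra $\langle x\rangle^{-3/2-\epsilon}$ decay of $V$. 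I expect this last step to be the main obstacle: the weight must be propagated through the geometric-optics decomposition and, above all, through the Wiener summation of Theorem \ref{thm7}, which is intertwined with the very construction of $g$ and does not manifestly respect weighted norms. A secondary difficulty is the distributional handling of the energy-shell factor $(|\eta|^2-|\xi|^2-i0)^{-1}$ and the uniformity in $s$ when extracting $g^1$.
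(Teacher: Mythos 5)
Your identification of the leading term is sound and is, in substance, the paper's own: the $\langle y\rangle^{-4}$ coefficient is $\widehat V(0)$ times a universal profile, it comes from the first Born term, and it is precisely the energy-conserving part. In the paper this appears as Lemma \ref{lema2.9}: integrating $L_1(t\omega)=\int_0^\infty\widehat V(s\omega)e^{-its/2}s\,\dd s$ and $\tilde L_1$ by parts twice produces the boundary term $L_1^1=\tilde L_1^1=4\langle t\rangle^{-2}\widehat V(0)$ plus a remainder controlled in $\langle t\rangle^{-3/2-\epsilon}L^2_{t,\omega}$, see (\ref{2.81tert})--(\ref{2.81tertt}); because the two leading coefficients coincide, the half-space cutoffs in (\ref{2.100}) sum to one and the leading part of $g$ is constant in $x$ --- the same cancellation you express by saying the on-shell part carries the leading term while the principal-value part decays faster. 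However, two of your concrete claims fail as stated. First, your $g^1$ (the \emph{entire} on-shell operator) is not in $\langle y\rangle^{-4}L^\infty_y\mc M_s$: from $V\in\langle x\rangle^{-3/2-\epsilon}L^2$ one only gets $\widehat V\in H^{3/2+\epsilon}\subset C^{0,\epsilon'}$, so $\widehat V((s^*-I)\zeta)-\widehat V(0)=O(|\zeta|^{\epsilon'})$, not the $O(|\zeta|)$ your expansion ``$\widehat V(0)|\zeta|+O(|\zeta|^2)$'' presumes; only the $\widehat V(0)$ piece obeys the pointwise $\langle y\rangle^{-4}$ bound, and the on-shell correction must itself be pushed into the $\langle y\rangle^{-1-\epsilon}L^1_y$ remainder by an estimate of exactly the type (\ref{2.81tert}). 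Second, for a fixed rotation $s\ne I$ the symbol $\widehat V((s^*-I)\zeta)$ does not decay along the rotation axis, so your fixed-$s$ kernel is a singular measure in $y$, not a function $O(\langle y\rangle^{-4})$; the paper's parametrization of the shell by reflections $S_\omega$ with $\omega=y/|y|$, as in (\ref{2.100}), is what avoids this.

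The decisive gap, though, is the step you yourself label ``the main obstacle'': the proposal contains no proof that the remainder --- off-shell part plus all higher terms, after the Wiener summation --- lies in $\langle y\rangle^{-1-\epsilon}L^1_y\mc M_sL^\infty_x$, and that is where the proposition actually lives. The paper closes this with machinery built expressly for it: condition (\ref{cond_asimp}) defines $X_{1+\epsilon}$ and $Y_{1+\epsilon}$, and Lemma \ref{yalg} shows $Y_{1+\epsilon}$ is a Banach subalgebra of $Y$ under $\oast$, the whole point being the elementary convolution inequalities $\langle y\rangle^{-4}\ast\langle y\rangle^{-4}\les\langle y\rangle^{-4}$ and $\langle y\rangle^{-4}\ast\bigl(\langle y\rangle^{-1-\epsilon}L^1\bigr)\subset\langle y\rangle^{-4}L^\infty+\langle y\rangle^{-1-\epsilon}L^1$. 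Since $T_{1+}\in Y_{1+\epsilon}$ by Lemma \ref{lema2.13} (via (\ref{2.81tert})), the entire inversion argument of Theorem \ref{theorem_1.1} --- frequency localization, local Neumann series, covering --- runs verbatim inside the subalgebra $Y_{1+\epsilon}$, giving $T_+=(I+T_{1+})^{-1}\oast T_{1+}\in Y_{1+\epsilon}$, hence $W_+\in X_{1+\epsilon}$, which is exactly the statement; the constancy in $x$ of the leading coefficient then comes from (\ref{2.81tertt}) as above. In other words, the weight \emph{does} respect the Wiener summation, but only because the weighted condition is an algebra property; without this observation (or an equivalent substitute), your outline stops short of a proof precisely at the point where the proof has to happen.
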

The leading-order term is given by $g^1_{s, y}(x)$ and has size $\langle y \rangle^{-4}$. It has better properties than subsequent terms: it is constant in the $x$ variable, hence it commutes with~$-\Delta$.

Given enough decay, this expansion can be continued to any order. For example, if $V \in \langle x \rangle^{-5/2-\epsilon} L^2$, we get a second term in the asymptotic expansion, of size $\langle y \rangle^{-5}$, and a remainder in $\langle y \rangle^{-2-\epsilon} L^1_y$.

Analogous decay results hold for the scattering operator $S$. However, the explicit formula (\ref{2.152}) shows no direct connection between the leading-order terms of $S$ and of $W_+$.\\

\paragraph{\bfseries Multilinear estimates.} The structure of wave operators established in (\ref{eqn1.8}) intervenes in the proof of multilinear estimates.
\begin{proposition}\lb{multi} Assume that $V \in B$ is real-valued and that zero is neither an eigenvalue, nor a resonance for $H=-\Delta+V$; take $U \in L^{\infty} \cap L^{3/2}$. Then
\be\lb{multilin}
\Big|\int_0^{\infty} U (e^{it H} P_c f)^2 \dd t\Big| \les \|f\|_{H^{-1}}^2 \|U\|_{L^{\infty} \cap L^{3/2}}.
\ee
\end{proposition}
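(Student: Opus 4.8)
The plan is to transfer the estimate from the free flow to the perturbed one by combining the intertwining identity (\ref{1.4}) with the structure formula (\ref{eqn1.8}). Writing $e^{itH}P_c = W_+ e^{itH_0} W_+^*$ and setting $g = W_+^* f$, the left-hand side of (\ref{multilin}) becomes $\int_0^\infty\int U\,(W_+ e^{itH_0}g)^2\dd x\dd t$. Since $W_+^*$ is bounded on $H^{-1}=W^{-1,2}$ by Corollary \ref{cor_sobolev_mare} (the case $s=-1$, $p=2$, where $sp=-2<3$ requires only $V\in B$), it is enough to bound this integral by $\|U\|_{L^\infty\cap L^{3/2}}\|g\|_{H^{-1}}^2$. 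The structural feature that makes this work is that $e^{itH_0}$ commutes with every affine isometry $R_{s,y}\colon h\mapsto h(s\,\cdot\,+y)$, $s\in\ISO(3)$, because $H_0=-\Delta$ does; hence $(e^{itH_0}g)(sx+y)=(e^{itH_0}R_{s,y}g)(x)$ and $\|R_{s,y}g\|_{H^{-1}}=\|g\|_{H^{-1}}$.

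Expanding $(W_+ e^{itH_0}g)^2$ with (\ref{eqn1.8}) produces, besides the free term, single and double integrals over the $(s_i,y_i)$ of expressions of the form $\int_0^\infty\int \widetilde U\,(e^{itH_0}R_{s_1,y_1}g)(e^{itH_0}R_{s_2,y_2}g)\dd x\dd t$, in which $\widetilde U=U\cdot g_{s_1,y_1}g_{s_2,y_2}$ absorbs the $L^\infty_x$ coefficients of the two elementary transformations. Thus $\|\widetilde U\|_{L^{3/2}}\leq\|U\|_{L^{3/2}}\|g_{s_1,y_1}\|_{L^\infty_x}\|g_{s_2,y_2}\|_{L^\infty_x}$, and $\int_{\R^3}\int_{\ISO(3)}\dd\|g_{s,y}\|_{L^\infty_x}\dd y<\infty$ by Theorem \ref{theorem_1.1}. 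So, after integrating the coefficients against this finite measure and using the isometry-invariance of the $H^{-1}$ norm, everything reduces to the bilinear free estimate
$$
\Big|\int_0^\infty\int \widetilde U\,(e^{itH_0}g_1)(e^{itH_0}g_2)\dd x\dd t\Big|\les\|\widetilde U\|_{L^{3/2}}\|g_1\|_{H^{-1}}\|g_2\|_{H^{-1}}.
$$
Notice that only the $L^{3/2}$ part of $\|U\|_{L^\infty\cap L^{3/2}}$ is genuinely used in this reduction.

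For the free estimate I would pair $\widetilde U$ with $F:=\int_0^\infty(e^{itH_0}g_1)(e^{itH_0}g_2)\dd t$ by Hölder, so that it suffices to prove $\|F\|_{L^3}\les\|g_1\|_{H^{-1}}\|g_2\|_{H^{-1}}$. Here the crucial point is that the product is \emph{not} conjugated: since $\int_0^\infty e^{it(|\xi|^2+|\eta|^2)}\dd t=i/(|\xi|^2+|\eta|^2)$, one obtains, up to a constant,
$$
\widehat F(\zeta)=i\int_{\R^3}\frac{\widehat{g_1}(\xi)\,\widehat{g_2}(\zeta-\xi)}{|\xi|^2+|\zeta-\xi|^2}\dd\xi,
$$
an elliptic resolvent-type kernel, rather than the oscillatory $1/(|\xi|^2-|\eta|^2)$ that a modulus square would produce. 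This positive denominator is exactly what allows a gain of a full derivative on each high-frequency factor --- the order needed to reach $H^{-1}$, which lies one derivative below the $\dot H^{-1/2}$ ceiling of Kato smoothing. Splitting each $g_j$ into low ($|\xi|<1$) and high ($|\xi|>1$) frequencies, on every interaction involving at least one high factor I would write $\widehat{g_j}=\langle\xi\rangle^{-1}h_j$ with $\|h_j\|_{L^2}=\|g_j\|_{H^{-1}}$, dominate $|\widehat F|$ by a convolution $p_1*p_2$ with $p_j\sim|h_j|\,|\xi|^{-2}\chi$ on the high shell, and close by Young's inequality (exponents $6/5,6/5\to 3/2$) followed by Hausdorff--Young, using $|\xi|^{-2}\chi_{|\xi|>1}\in L^3(\R^3)$.

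The remaining low--low interaction is the delicate endpoint: there $\langle\xi\rangle\sim1$, no derivative is gained, and the kernel is genuinely singular at the origin. For this piece I would instead bound $\|F_{\mathrm{lo}}\|_{L^3}\leq\int_0^\infty\|e^{itH_0}g_{1,\mathrm{lo}}\|_{L^6}\|e^{itH_0}g_{2,\mathrm{lo}}\|_{L^6}\dd t$ by Minkowski and Hölder, then invoke the endpoint Strichartz estimate $\|e^{itH_0}h\|_{L^2_tL^6_x}\les\|h\|_{L^2}$ together with $\|g_{j,\mathrm{lo}}\|_{L^2}\sim\|g_{j,\mathrm{lo}}\|_{H^{-1}}$. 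I expect the main obstacle to be precisely this interplay: one must exploit the unconjugated square to produce the elliptic multiplier, and hence the full-derivative high-frequency gain that matches the $H^{-1}$ regularity, while simultaneously controlling the infrared regime --- where that gain is unavailable --- through the endpoint Strichartz inequality and the $L^{3/2}$ integrability of $U$.
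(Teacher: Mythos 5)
Your reduction to a free bilinear estimate --- the intertwining property (\ref{1.4}), $g=W_+^*f$ with Corollary \ref{cor_sobolev_mare} giving $\|g\|_{H^{-1}}\les\|f\|_{H^{-1}}$, absorbing the $L^\infty_x$ coefficients of (\ref{eqn1.8}) into $U$, and the fact that affine isometries commute with $e^{itH_0}$ and preserve the $H^{-1}$ norm --- is correct and coincides with the paper's own reduction. The gap is in the free estimate, and it is fatal: your substitution is inverted. If $\|h_j\|_{L^2}=\|g_j\|_{H^{-1}}$ then $\widehat{g_j}=\langle\xi\rangle h_j$, not $\langle\xi\rangle^{-1}h_j$; an $H^{-1}$-normalized function is \emph{large} at high frequencies, $|\widehat{g_j}(\xi)|\sim|\xi|\,|h_j(\xi)|$. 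In the high--high interaction the elliptic gain $(|\xi|^2+|\zeta-\xi|^2)^{-1}\leq(|\xi|\,|\zeta-\xi|)^{-1}$ is therefore exactly consumed by the two factors $|\xi|$ and $|\zeta-\xi|$, leaving only $|\widehat F_{hh}|\leq|h_1|\ast|h_2|$ with $h_1,h_2\in L^2$: this is an $L^\infty$ bound with no $L^{3/2}$ control, and Young plus Hausdorff--Young cannot produce $F_{hh}\in L^3$. There is no derivative to spare; the ``full derivative gain'' you describe is precisely what is needed to undo the $H^{-1}$ weight, with nothing left over. (The high--low piece does survive with slightly adjusted Young exponents, and your low--low endpoint Strichartz argument is fine.)

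In fact your intermediate claim $\|F\|_{L^3}\les\|g_1\|_{H^{-1}}\|g_2\|_{H^{-1}}$, and with it the remark that only the $L^{3/2}$ part of $\|U\|_{L^\infty\cap L^{3/2}}$ is used, are false, already for $V=0$. Take $\widehat g=N^{-1/2}\chi_{N\leq|\xi|\leq2N}$, so that $\|g\|_{H^{-1}}\sim1$. Since the square is unconjugated, $\widehat F(\zeta)=ic\int\frac{\widehat g(\xi)\,\widehat g(\zeta-\xi)}{|\xi|^2+|\zeta-\xi|^2}\dd\xi$ has a fixed phase, and $|\widehat F(\zeta)|\gtrsim1$ for $|\zeta|\leq N/4$; since $\widehat F/i\geq0$ and is supported in $|\zeta|\leq4N$, this gives $\Im F\gtrsim N^3$ on $|x|\leq\epsilon/N$, hence $\|F\|_{L^3}\gtrsim N^2$. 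Pairing with $U=-i\chi_{|x|\leq\epsilon/N}$, for which $\|U\|_{L^{3/2}}\sim N^{-2}$ while $\|U\|_{L^\infty}=1$, yields $|\int UF|\gtrsim1\gg N^{-2}\|g\|_{H^{-1}}^2$. So the $L^\infty$ component of the norm is genuinely needed, exactly for the high--high interaction your argument mishandles. This is the point where the paper invokes Janson--Peetre \cite{peetre}: $\frac{|\xi_1||\xi_2|}{|\xi_1|^2+|\xi_2|^2}$ is a Schur multiplier, so the kernel $(|\xi_1|^2+|\xi_2|^2)^{-1}$ may be replaced by $(|\xi_1||\xi_2|)^{-1}$ \emph{without} taking absolute values; the form then reduces to $\int U\,h_1h_2$ with $h_j\in\dot H^1+L^2$, whose $L^2\times L^2$ (high--high) part is estimated against $\|U\|_{L^\infty}$. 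Some such structure-preserving device is unavoidable here: once absolute values are taken inside the $\xi$-integral, as in your convolution bound, the high--high piece cannot be closed.
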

This multilinear estimate is not implied by the $L^1$ boundedness of wave operators. The proof uses the structure formula (\ref{eqn1.8}) of Theorem \ref{theorem_1.1}.

\section{Proof of the statements}
\subsection{Notations}

Let $R_0(\lambda) = (H_0 - \lambda)^{-1}$ be the free resolvent and let $R_V(\lambda) = (H-\lambda)^{-1}$ be the perturbed resolvent. Explicitly, in three dimensions and for $\Im \lambda \geq 0$, 
\be\lb{eq_3.55}
R_0(\lambda^2)(x, y) = \frac 1 {4\pi} \frac {e^{i \lambda |x-y|}}{|x-y|}.
\ee
$R_0(\lambda)$ is analytic in $\C \setminus [0, \infty)$ and discontinuous along $[0, \infty)$, taking different boundary values in the upper and in the lower half-plane; zero is a branching point. $R_V(\lambda)$ is meromorphic in $\C \setminus [0, \infty)$, with poles being the eigenvalues~of~$H$.


We also denote, for $\lambda \in \set R$,
\be\begin{aligned}\lb{eq_2.2}
R_{0a}(\lambda) &= \frac 1 i \big(R_0(\lambda+i0) - R_0(\lambda-i0)\big) \\
R_{Va}(\lambda) &= \frac 1 i \big(R_V(\lambda+i0) - R_V(\lambda-i0)\big),
\end{aligned}\ee
only when $\lambda \in \set R$ is not an eigenvalue of $H$, for the latter.\\

\paragraph{\bfseries{Other notations}}
We denote Lorenz spaces by $L^{p, q}$, $1\leq p, q \leq \infty$, Sobolev spaces by $W^{s, p}$, $s \in \set R$, $1 \leq p \leq \infty$, and fix the Fourier transform to
$$
\widehat f(\eta) = \int_{\set R^d} e^{-ix \eta} f(x) \dd x,\ f^{\vee}(x) = (2\pi)^{-d} \int_{\set R^d} e^{i\eta x} f(\eta) \dd \eta.
$$
We adopt the point of view according to which
$$\begin{aligned}
e^{itH_0} &= (R_{0a}(\lambda))^{\vee}(t);\ R_{0a}(\lambda) = (e^{itH_0})^{\wedge},\ \lambda \in \R;\\
i R_0(\lambda) &= (\chi_{[0, \infty)}(t) e^{itH_0})^{\wedge}(\lambda),\ \Im \lambda < 0.
\end{aligned}$$
We use the kernel $K(x_0, x_1)$ to represent an operator $K$ if
$$
\langle Kf, g \rangle = \int_{\R^6} f(x_0) K(x_0, x_1) \ov g(x_1) \dd x_0 \dd x_1.
$$
Composition of operators translates into that of the corresponding kernels:
\be\lb{comp_op}
(K_2 \circ K_1 f)(x_2) = \int_{\R^3} \Big(\int_{\R^3} K_1(x_0, x_1) K_2(x_1, x_2) \dd x_1\Big) f(x_0) \dd x_0.
\ee
Also, let
\begin{list}{\labelitemi}{\leftmargin=1em}
\item[$\ast$] $\chi_A$ be the characteristic function of the set $A$;
\item[$\ast$] $\mc M$ be the space of finite-mass Borel measures on $\set R$ or $\set R^d$;
\item[$\ast$] $\dot W^{1, \mc M}$ be the set of functions $f$ such that $|\dl f| \in \mc M$;
\item[$\ast$] $\delta_x$ denote Dirac's measure at $x$;
\item[$\ast$] $\langle x \rangle = (1+|x|^2)^{1/2}$;
\item[$\ast$] $\langle x \rangle^{\alpha} L^p = \{\langle x \rangle^{\alpha} f(x) \mid f \in L^p\}$ with the natural norm;
\item[$\ast$] $\B(B_1, B_2)$ be the Banach space of bounded operators from $B_1$ to $B_2$;
\item[$\ast$] $\mc F_{x_1, x_2, \ldots} f$ be the Fourier transform in $x_1$, $x_2$, etc., of $f$;
\item[$\ast$] $\widehat f$ be the Fourier transform of $f$ in all its variables;
\item[$\ast$] Fourier multipliers be $F(\dl) f := \big(F(i\xi) \widehat f(\xi)\big)^{\wedge}$;
\item[$\ast$] $C$ be any constant (not always the same throughout the paper);
\item[$\ast$] $\mc S$ be the Schwartz space;
\item[$\ast$] $S^{d-1}$ be the $d-1$-dimensional unit sphere;
\item[$\ast$] $S_{\omega}(x) = x - 2(x \cdot \omega) \omega$ be the reflection of $x$ along $\omega$;
\item[$\ast$] $ISO(d)$ be the group of isometries of $\set R^d$.
\end{list}

\subsection{Preliminary lemmas}\lb{lemmas}
We first explain the spectral condition of Theorem \ref{theorem_1.1}. The natural condition arising from the proof is that the equation
\be\lb{1.10}
f = - R_0(0) V f
\ee
has no solution $f \in L^{\infty}$. We prove this can be replaced with the condition stated in Theorem \ref{theorem_1.1} --- that there are no eigenvalues or resonances at zero.

The kernel of $R_0(0) = (-\Delta)^{-1}$  is explicitly given by (\ref{eq_3.55}), which in this case has the form $R_0(0)(x, y) = \frac 1 {4\pi} \frac 1 {|x-y|}$.

\begin{lemma}\lb{lem2.1} Assume that $V \in L^{3/2, 1}$ and let $H=-\Delta + V$. If (\ref{1.10}) has a solution $f \in L^{\infty}$, then $Hf = 0$ and $f \in L^{3, \infty} \subset B' \subset \langle x \rangle^\sigma L^2$ for any $\sigma > 1/2$.

Moreover, when $V \in B$, assume there exists a solution $f \in B'$ to (\ref{1.10}), i.e.\ one such that
$$
\sup_{k \in \Z} 2^{-k/2} \|\chi_{|x| \in [2^k, 2^{k+1}]}(x) f(x)\|_{L^2} < \infty.
$$
Then $f \in L^{\infty} \cap L^{3, \infty}$.
\end{lemma}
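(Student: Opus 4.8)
The plan is to establish the two implications in Lemma \ref{lem2.1} separately, both driven by a \emph{bootstrapping} argument that exploits the smoothing properties of the Newtonian potential $R_0(0)$ together with the membership of $V$ in the stated space.

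First I would treat the $L^\infty$-hypothesis case. Suppose $f \in L^\infty$ solves $f = -R_0(0)Vf$. Since $V \in L^{3/2,1}$ and $f \in L^\infty$, the product $Vf \in L^{3/2,1}$. The key is to understand the mapping properties of $R_0(0)$, whose kernel is $\frac{1}{4\pi|x-y|}$; this is the Riesz potential $I_2$ (convolution with $|x|^{-1}$ in $\R^3$), which by the Hardy--Littlewood--Sobolev inequality maps $L^{3/2,1} \to L^{3,\infty}$ (the endpoint Lorentz-space version of $L^{3/2} \to L^3$). Hence $f = -R_0(0)Vf \in L^{3,\infty}$. The inclusion $L^{3,\infty} \subset \langle x\rangle^\sigma L^2$ for every $\sigma > 1/2$ is a routine check: one splits into $|x| \leq 1$ and $|x| > 1$, using that $L^{3,\infty}$ functions are locally $L^2$ near the origin while the weight $\langle x\rangle^\sigma$ with $\sigma > 1/2$ compensates the slow decay at infinity, since $\int \langle x\rangle^{-2\sigma}|x|^{-2}\,dx$-type integrals against the $L^{3,\infty}$ profile converge. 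To obtain $Hf = 0$, I would apply $-\Delta$ to both sides of $f = -R_0(0)Vf$, using $(-\Delta)R_0(0) = I$ in the distributional sense, which yields $-\Delta f = -Vf$, i.e. $(-\Delta + V)f = 0$. I would also record $B' \subset \langle x\rangle^\sigma L^2$, noting that $B = (L^2, |x|^{-1}L^2)_{1/2,1}$ dualizes to the stated weighted condition.

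For the second part, the hypothesis is weaker — $f \in B'$ rather than $L^\infty$ — and the goal is the reverse improvement $f \in L^\infty \cap L^{3,\infty}$. Here I would again use the fixed-point relation $f = -R_0(0)Vf$, but now estimate the kernel action directly. The natural strategy is to show that $R_0(0)$ maps the space $V \cdot B'$ into $L^\infty$. Since $V \in B$ and $f \in B'$, by the pairing structure one controls $\|Vf\|$ in an $L^1$-type or dyadically-summed norm adapted to $B$; concretely, one estimates $\|\,|x|^{-1} * (Vf)\,\|_{L^\infty}$ by decomposing both the convolution kernel and $Vf$ into dyadic shells and summing, the $2^{k/2}$ weights in the definition of $B$ being exactly tuned (via Cauchy--Schwarz on each shell) to make $\sum_k 2^{k/2}\|\chi_{[2^k,2^{k+1}]}Vf\|$ converge and dominate the pointwise value of the potential. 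This recovers $f \in L^\infty$, after which the first-part argument gives $f \in L^{3,\infty}$ as well.

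The main obstacle I anticipate is the second implication: the space $B'$ is genuinely larger than $L^\infty$, so the gain from $B'$ to $L^\infty$ cannot come from Sobolev embedding alone and must rely on the precise dyadic-weight balance between $B$ and $B'$. The delicate point is verifying that $R_0(0)(Vf)$ is pointwise bounded \emph{uniformly} in $x$: one must handle the near-diagonal contribution $|x-y|$ small (where $|x-y|^{-1}$ is singular but $Vf$ is controlled in $L^2_{loc}$ via the $B'$ bound on the relevant shell) and the far contribution $|x-y|$ large (where decay of the kernel competes against the possibly non-decaying $B'$ profile of $f$) on an equal footing. Getting these two regimes to close simultaneously, with the shell weights $2^{k/2}$ appearing with the correct sign in each, is where the scaling-criticality of $B$ is essential, and I expect the bulk of the technical work to reside in that dyadic summation.
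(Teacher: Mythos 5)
Both halves of your proposal contain a genuine gap, and the two gaps share a common missing idea. In the first half, your key claim --- that the Riesz potential $I_2 = R_0(0)$ maps $L^{3/2,1} \to L^{3,\infty}$ as ``the endpoint Lorentz version of $L^{3/2}\to L^3$'' --- is false; it is not even scale-invariant. In $\R^3$ the HLS scaling forces $1/q = 1/p - 2/3$, so the endpoint with $p=3/2$ is $q=\infty$: the true statements are $R_0(0)\colon L^{3/2,1}\to L^{\infty}$ (Lorentz duality, since $|x|^{-1}\in L^{3,\infty}$) and $R_0(0)\colon L^{1}\to L^{3,\infty}$ (weak type). Indeed, if $\|I_2 g\|_{L^{3,\infty}} \leq C \|g\|_{L^{3/2,1}}$ held, applying it to $g(\lambda \cdot)$ and letting $\lambda \to 0$ would force $I_2 g \equiv 0$. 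Consequently, from $Vf \in L^{3/2,1}$ you may only conclude $f \in L^{\infty}$, which is what you assumed: your argument produces no decay, and the conclusion $f \in L^{3,\infty}$ is unproved. The paper gets the decay by splitting $V = V_1 + V_2$ with $V_1$ bounded and compactly supported and $\|V_2\|_{L^{3/2,1}}$ small: then $V_1 f \in L^1$, so $R_0(0)V_1 f \in L^{3,\infty}$ by the weak-type bound, while $I + R_0(0)V_2$ is inverted on $L^{3,\infty}$ by Neumann series, and $f = -(I+R_0(0)V_2)^{-1}R_0(0)V_1 f \in L^{3,\infty}$.

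In the second half, your plan to show that $R_0(0)$ maps $V \cdot B'$ into $L^{\infty}$ by dyadic bookkeeping cannot succeed, because that bilinear estimate is simply false: fix $|x_0|=1$ and set $V = f = |y-x_0|^{-5/4}\chi_{|y-x_0|\leq 1/2}$. Then $V$ is compactly supported and in $L^2$, hence $V \in B$; likewise $f \in B'$; but
$$
\big(R_0(0)(Vf)\big)(x_0) = c\int_0^{1/2} r^{-5/2}\, r^{-1}\, r^{2} \dd r = c\int_0^{1/2} r^{-3/2} \dd r = \infty.
$$
The near-diagonal regime that you yourself flag as the main obstacle is exactly where this fails: the $B$--$B'$ pairing controls $Vf$ only in $L^1$ shell by shell, and $L^1 \ast |x|^{-1} \not\subset L^{\infty}$, so no arrangement of the $2^{k/2}$ weights can close it. The lemma is nevertheless true because $f$ is not arbitrary --- it solves the equation --- and this permits the paper's two-step bootstrap: first, $Vf \in L^1$ gives $f = -R_0(0)Vf \in L^{3,\infty}$ by the weak-type bound; second, with the same splitting as above, $V_1 f$ is now compactly supported and in $L^{3,\infty}$, hence in $L^{3/2,1}$, so $R_0(0)V_1 f \in L^{\infty}$, and the Neumann series for $(I + R_0(0)V_2)^{-1}$ on $L^{\infty}$ yields $f \in L^{\infty}$. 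The decomposition $V = V_1 + V_2$ with a compactly supported bounded piece plus a small piece, and the passage through the intermediate space $L^{3,\infty}$ before upgrading to $L^{\infty}$, are the ideas missing from your proposal; without them neither implication goes through.
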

Here $L^{3, \infty}$ is the same as weak-$L^3$. The second part of the lemma shows the equivalence of the two notions of resonance, one involving $L^{\infty}$ and the other involving weighted $L^2$ spaces, i.e.\ $\langle x \rangle^{\sigma} L^2$ for any $\sigma > 1/2$.

\begin{proof} If such $f \in L^{\infty}$ exists, 
then $\partial_k \partial_\ell f$ are in $L^{3/2, 1}$ as well because the Riesz transform is bounded on $L^{3/2, 1}$.

Thus $-\Delta f \in L^{3/2, 1}$ satisfies the equation $-\Delta f = \Delta R_0(0) V f = -V f$, so $H f = 0$.

Write $V = V_1 + V_2$, where $V_1$ is bounded of compact support and $\|V_2\|_{L^{3/2, 1}}$ is small. Then $(I + R_0(0) V_2)^{-1}$ is a bounded operator on $L^{3, \infty}$ and
$$
f = (I + R_0(0) V_2)^{-1} R_0(0) V_1 f.
$$
Since $V_1 f \in L^1$, $R_0(0) V_1 f \in L^{3, \infty}$, so $f \in L^{3, \infty} \subset B'$.

Hence $f$ belongs to $\langle x \rangle^{\sigma} L^2$ for any $\sigma>1/2$.

When $V \in B$ and $f \in B'$, then $V f \in L^1$, so $f = R_0(0) V f \in L^{3, \infty}$. We again split $V$ into two parts, $V = V_1 + V_2$, and write
$$
f = (I + R_0(0) V_2)^{-1} R_0(0) V_1 f.
$$
$(I + R_0(0) V_2)^{-1}$ is bounded on $L^{\infty}$ and $V_1 f \in L^{3/2, 1}$, $R_0(0) V_1 f \in L^{\infty}$, so $f \in L^{\infty}$.
\end{proof}

The discussion in the sequel is carried for $V \in L^{3/2, 1}$, because $B \subset L^{3/2, 1}$.



Consider $V \in L^{3/2, 1}$ and real-valued; also assume that the spectral condition (\ref{1.10}) holds. The spectral projection $P_p = I - P_c$, corresponding to the point spectrum, is a finite-rank operator of the form
\be
P_p = \sum_{\ell=1}^N \langle \cdot, f_{\ell} \rangle f_{\ell}.
\ee
$P_p$ is an orthogonal projection, i.e.\ $\langle f_{\ell_1}, f_{\ell_2} \rangle = \delta_{\ell_1}(\ell_2)$. $f_{\ell}$ are eigenfunctions of $H=-\Delta+V$: $H f_\ell = \lambda_\ell f_\ell$.

If the spectral condition (\ref{1.10}) holds, then $f_{\ell}$ decay exponentially by Agmon's bound, see \cite{agmon} and \cite{bec}, and there is no zero resonance, which would not be detected by $P_p$ anyway.

Next, we derive an elementary formula for the wave operators.
\begin{lemma} Assume that $V \in L^{3/2, 1}$ is real-valued and $H=-\Delta+V$ has no zero eigenvectors or resonances. Let $P_p=I-P_c$ be the projection on the point spectrum of $H$. Then $W_+$ exists, $W_+ = \slim_{t \to \infty} e^{itH} P_c e^{-itH_0}$, $W_+$ is $L^2$-bounded, and
\be\begin{aligned}\lb{eqn1.6}
W_+ f &= f + i \int_0^{\infty} e^{itH} V e^{-itH_0} f \dd t \\
&= P_c f + i \int_0^{\infty} e^{itH} P_c V e^{-itH_0} f \dd t.
\end{aligned}\ee
\end{lemma}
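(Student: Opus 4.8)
The plan is to peel the four assertions apart: existence of the strong limit comes first, $L^2$-boundedness is then free, the location of $\Ran W_+$ produces the $P_c$-insertion, and Duhamel's formula follows by integrating a derivative.

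For existence I would run Cook's method. Writing $U(t):=e^{itH}e^{-itH_0}$, each $U(t)$ is unitary, so the family is uniformly bounded and it is enough to verify the Cauchy criterion as $t\to\infty$ on a dense set. A formal differentiation gives $\frac{d}{dt}U(t)f = i\,e^{itH}(H-H_0)e^{-itH_0}f = i\,e^{itH}Ve^{-itH_0}f$, so convergence is reduced to $\int^\infty \|Ve^{-isH_0}f\|_{L^2}\dd s<\infty$ for $f$ in a dense class, which I take to be $\{f : \widehat f\in C_c^\infty(\R^3\setminus\{0\})\}$. For such $f$ the free evolution is a wave packet concentrated in $|x|\sim s$, with $\|e^{-isH_0}f\|_{L^\infty}\les\langle s\rangle^{-3/2}$ and rapid decay off the packet, whence on the tail $\|Ve^{-isH_0}f\|_{L^2}\les \langle s\rangle^{-3/2}\|V\|_{L^2(|x|\sim s)} + O_N(\langle s\rangle^{-N})$. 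A dyadic decomposition in $s$ then gives $\int_1^\infty \langle s\rangle^{-3/2}\|V\|_{L^2(|x|\sim s)}\dd s \les \sum_{j\ge 0} 2^{-j/2}\|V\|_{L^2(|x|\sim 2^j)}\le \|V\|_B<\infty$, so the large-time part is controlled precisely by the shell structure of $B$. Having produced $W_+$ as a strong limit, $\|W_+f\|_{L^2}=\lim_t\|U(t)f\|_{L^2}=\|f\|_{L^2}$, so $W_+$ is an isometry and in particular $L^2$-bounded.

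The main obstacle is that, at the scaling-critical, merely $L^{3/2,1}$ regularity of $V$, the time integrand $Ve^{-isH_0}f$ need not be globally $L^2$ --- $B$ does not embed into $L^2_{loc}$ near the origin --- so the naive Cook estimate is incomplete at small times and at the borderline. I would bridge this gap by invoking the limiting absorption principle and the asymptotic completeness established by Ionescu--Schlag \cite{ionschlag} for $V\in L^{3/2}\supset B$, which yield existence, $L^2$-boundedness, and the absence of singular continuous spectrum without requiring the time integrand to lie in $L^2$ pointwise; the Cook computation above then serves to identify the limit and to justify convergence of the improper integral in the representation formula.

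Finally I would assemble the two displayed identities. The intertwining relation $e^{i\tau H}W_+=W_+e^{i\tau H_0}$ shows that the $H$-spectral measure of $W_+f$ is absolutely continuous, so $\Ran W_+\subseteq\mc H_{ac}(H)$; combined with $\sigma_{sc}(H)=\varnothing$ this gives $\mc H_{ac}(H)=P_cL^2$ and hence $W_+=P_cW_+$. Since $P_c$ commutes with $e^{itH}$ and is bounded, $e^{itH}P_ce^{-itH_0}=P_cU(t)\to P_cW_+=W_+$ strongly, which is the asserted alternative form of the limit. Integrating the derivative above yields $U(t)f-f=i\int_0^t e^{isH}Ve^{-isH_0}f\dd s$; letting $t\to\infty$, the left side tends to $W_+f-f$ in $L^2$, giving the first line of (\ref{eqn1.6}), and applying $P_c$ together with $P_cW_+=W_+$ and $P_ce^{isH}=e^{isH}P_c$ converts it into the second line.
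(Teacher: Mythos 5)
Your diagnosis of the obstacle is exactly right --- for $V$ merely in $L^{3/2,1}$ (or even in $B$), the integrand $Ve^{-itH_0}f$ need not lie in $L^2$ for any $t$, so Cook's method cannot run --- but your proposed repair does not close the gap. Invoking Ionescu--Schlag \cite{ionschlag} yields existence of $W_\pm$, asymptotic completeness, and $\sigma_{sc}(H)=\varnothing$, but it does not produce the representation formula (\ref{eqn1.6}), which is the actual content of this lemma (and the starting point of the expansion (\ref{2.2}) used in the rest of the paper). Your claim that ``the Cook computation above then serves \dots\ to justify convergence of the improper integral'' is circular: that computation bounds $\int \|Ve^{-isH_0}f\|_{L^2}\dd s$, which is precisely the quantity you have just observed is not finite --- indeed not even defined, since $s\mapsto e^{isH}Ve^{-isH_0}f$ is not an $L^2$-valued function; for the same reason, even the finite-time Duhamel identity that you propose to integrate and then let $t\to\infty$ has not been given a meaning. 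A secondary mismatch: your dyadic tail estimate is phrased in terms of $\|V\|_B$, whereas the lemma assumes only $V\in L^{3/2,1}$, a strictly larger class.

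The paper closes this gap with a different mechanism: endpoint Strichartz estimates. By H\"older's inequality in Lorentz spaces, $\|Ve^{-isH_0}f\|_{L^{6/5,2}_x}\les \|V\|_{L^{3/2,1}}\|e^{-isH_0}f\|_{L^{6,2}_x}$, and Keel--Tao \cite{keetao} gives $e^{-isH_0}f\in L^2_s L^{6,2}_x$; dually, the map $F\mapsto \int_0^t e^{isH}P_c F(s)\dd s$ is bounded from $L^2_sL^{6/5,2}_x$ to $L^2_x$ because $e^{itH}P_c$ \emph{also} satisfies the endpoint Strichartz estimates for this class of potentials --- this is the key external input, taken from \cite{bec}, not from \cite{ionschlag}. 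With these two facts the Duhamel integral $i\int_0^t e^{isH}P_cVe^{-isH_0}f\dd s$ is well defined and converges in norm as $t\to\infty$, which simultaneously gives the second line of (\ref{eqn1.6}) and the existence of $\slim_{t\to\infty}e^{itH}P_ce^{-itH_0}$. The point-spectrum part is then handled by Agmon decay of the eigenfunctions (yielding $P_pe^{-itH_0}f\to 0$, so that the two forms of the limit agree) together with the explicit computation $\slim_{t\to\infty} i\int_0^t e^{isH}P_pVe^{-isH_0}\dd s = P_p$, based on $f_\ell=-R_0(\lambda_\ell)Vf_\ell$, which upgrades the $P_c$-formula to the first line of (\ref{eqn1.6}). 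Your intertwining argument showing $\Ran W_+\subset P_cL^2$ is a perfectly good substitute for that last step, but only after the integral formula has been given meaning, and that is exactly what requires the Strichartz machinery absent from your proposal.
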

This lemma is a direct consequence of \cite{bec} and also holds when $V$ is in $L^{3/2, \infty}_0$. 
There exist similar formulae for $W_-$ and $W_{\pm}^*$; in particular,
\be\lb{eqn1.7}
W_-^* f = f + i \int_{-\infty}^0 e^{itH_0} V e^{-itH} f \dd t.
\ee
\begin{proof}
The second half of (\ref{eqn1.6}) is an immediate consequence of the endpoint Strichartz estimates of Keel--Tao \cite{keetao}, which hold in this case for both $e^{itH_0}$ and $e^{itH} P_c$; see \cite{bec} for the latter. Indeed, one has
$$\begin{aligned}
\|e^{it H_0} f\|_{L^2_t L^{6, 2}_x} &\les \|f\|_{L^2}\\
\Big\|\int e^{-isH_0} F(s) \dd s\Big\|_{L^2_x} &\les\|F\|_{L^2_t L^{6/5, 2}_x}, 
\end{aligned}$$
and likewise for $e^{it H} P_c$.

Recall that by Duhamel's formula
\be
e^{itH} P_c e^{-itH_0} f = P_c f + i \int_0^t e^{isH} P_c V e^{-isH_0} f \dd s.
\ee
Since this integral converges in norm, we let $t \to \infty$ and obtain the second part of (\ref{eqn1.6}). Thus, endpoint Strichartz estimates imply the existence of the strong limit $\slim_{t \to \infty} e^{-itH} P_c e^{itH_0}$ in $L^2$.

In the absence of zero eigenvectors or resonances, all eigenvectors, if they exist, must decay exponentially by Agmon's bound.
Then, for $f \in L^2$
\be\lb{28}
\lim_{t \to \infty} P_p e^{-itH_0} f = 0.
\ee
To show this, we approximate $f$ in $L^2$ by $L^1$ functions, for which $e^{itH_0} f$ decays pointwise like $|t|^{-3/2}$. Consequently,
$$
W_+ = \slim_{t \to \infty} e^{itH} e^{-itH_0} = \slim_{t \to \infty} e^{itH} P_c e^{-itH_0}.
$$

In order to pass to the first part of (\ref{eqn1.6}), a separate computation shows~that
$$
\slim_{t \to \infty} i \int_0^t e^{isH} P_p V e^{-isH_0} \dd s = \slim_{t \to \infty} P_p (I - e^{itH} e^{-itH_0}) = P_p.
$$
Indeed, for each eigenfunction $f_{\ell}$ corresponding to an eigenvalue $\lambda_{\ell}$
$$
f_{\ell} = -R_0(\lambda_{\ell}) V f_{\ell},
$$
so
$$
\slim_{t \to \infty} i \int_0^t e^{is\lambda_{\ell}} \langle V e^{-isH_0} f, f_{\ell} \rangle f_{\ell} \dd s = \slim_{t \to \infty} \langle (I - e^{it\lambda_{\ell}} e^{-itH_0}) f, f_{\ell} \rangle f_{\ell} = \langle f, f_{\ell} \rangle f_{\ell}.
$$
\end{proof}

Next, we derive the asymptotic expansion (\ref{2.2}) for $W_+$ by iterated applications of Duhamel's formula. For $f \in L^2$
\be\lb{duham}
e^{itH} f = e^{itH_0} f -i \int_0^t e^{i(t-s)H_0} V e^{isH} f \dd s.
\ee
Applying this in (\ref{eqn1.6}), we obtain (\ref{2.2}).

Since $W_{n+}$ and $W_{\pm}$ involve singular integrals akin to the Hilbert transform, for $\epsilon>0$ we introduce the mollified versions
\be\begin{aligned}\lb{eq2.5}
W_{n+}^\epsilon f&:= i^n \int_{0\leq t_1 \leq \ldots \leq t_n} e^{i(t_n-t_{n-1})H_0-\epsilon(t_n-t_{n-1})} V \ldots \\
& e^{i(t_2 - t_1)H_0 - \epsilon(t_2 - t_1)} V e^{it_1 H_0 - \epsilon t_1} V e^{-it_nH_0} f \dd t_1 \ldots \dd t_n,
\end{aligned}\ee
together with
\be\lb{eq2.6}
W_+^\epsilon = I + i \int_0^{\infty} e^{it H-\epsilon t} V e^{-it H_0} \dd t.
\ee
By (\ref{eqn1.6}) and (\ref{eq2.6}), $W_+^\epsilon f \to W_+ f$ as $\epsilon \to 0$ for each $f \in L^2$. Indeed, the dispersive terms converge by dominated convergence and for every $f \in L^2$
$$
\lim_{\epsilon \to 0} \int_0^{\infty} e^{-itH - \epsilon t} P_p e^{itH_0} \dd t = \int_0^{\infty} e^{-itH} P_p e^{itH_0} \dd t,
$$
where the left-hand side is absolutely integrable for each $\epsilon>0$ and the right-hand side is an improper integral.



%



In the sequel, we shall employ the following form of the operators $W_{n+}^\epsilon$ and $W_+^\epsilon$, introduced by Yajima in \cite{yajima0}:
\begin{definition} For $\epsilon \geq 0$, let $T_{1+}^\epsilon(x_0, x_1, y)$ be defined by
\be\lb{2.16}
(\mc F_{x_0, x_1, y} T_{1+}^\epsilon)(\xi_0, \xi_1, \eta) := \frac {\widehat V(\xi_1 - \xi_0)}{|\xi_1 + \eta|^2 - |\eta|^2 - i\epsilon}
\ee
and, more generally,
\be\begin{aligned}\lb{2.17}
(\mc F_{x_0, x_n, y} T_{n+}^\epsilon)(\xi_0, \xi_n, \eta) &:= 
\int_{\R^{3(n-1)}} \frac{\prod_{\ell=1}^n \widehat V(\xi_{\ell} - \xi_{\ell-1}) \dd \xi_1 \ldots \dd \xi_{n-1}}{\prod_{\ell=1}^n (|\xi_{\ell}+\eta|^2-|\eta|^2 - i \epsilon)}.
\end{aligned}\ee
Also let $T_+^\epsilon$ be given by
\be\begin{aligned}\lb{eqn2.13}
\mc F_{y} T_+^\epsilon(x_0, x_1, \eta) &:= e^{ix_0 \eta} \big(R_V(|\eta|^2 + i\epsilon) V\big)(x_0, x_1) e^{-ix_1\eta};
\end{aligned}\ee
see (\ref{2.24}) for an alternate form. Finally, define $T_{n-}^\epsilon$ and $T_-^\epsilon$ by
\be\begin{aligned}
(\mc F_{x_0, x_n, y} T_{n-}^\epsilon)(\xi_0, \xi_n, \eta) &:= 
\int_{\R^{3(n-1)}} \frac{\prod_{\ell=1}^n \widehat V(\xi_{\ell} - \xi_{\ell-1}) \dd \xi_1 \ldots \dd \xi_{n-1}}{\prod_{\ell=1}^n (|\xi_{\ell}+\eta|^2-|\eta|^2 + i \epsilon)},\\
\mc F_{y} T_-^\epsilon(x_0, x_1, \eta) &:= e^{ix_0 \eta} \big(R_V(|\eta|^2 - i\epsilon) V\big)(x_0, x_1) e^{-ix_1\eta}.
\end{aligned}\ee
\end{definition}

For each $\eta \in \R^3$, $\mc F_\eta T_+^\epsilon \in \B(L^1_{x_1}, L^1_{x_0})$ and $\mc F_\eta T_+^\epsilon \in \B(L^{\infty}_{x_0}, L^{\infty}_{x_1})$. The same holds for all the other kernels that we consider --- see Lemma \ref{lema2.4}.


\begin{lemma}\lb{lemma2.1} For $\epsilon>0$, $n \geq 1$,
\be\begin{aligned}\lb{2.19}
\langle W_{n+}^\epsilon f, g \rangle &= - \int_{\set R^9} T_{n+}^\epsilon(x_0, x, y) f(x-y) \ov g(x) \dd x_0 \dd y \dd x
\end{aligned}\ee
and
\be\begin{aligned}\lb{2.20}
\langle W_+^\epsilon f, g \rangle &= \int_{\set R^9} T_+^\epsilon(x_0, x, y) f(x-y) \ov g(x) \dd x_0 \dd y \dd x.
\end{aligned}\ee
\end{lemma}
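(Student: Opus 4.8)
The plan is to evaluate both pairings by inserting the plane-wave (Fourier) representation of every free propagator and of every multiplication operator $V$, carrying out the $\epsilon$-regularized time integrals explicitly, and matching the outcome against the Fourier-side definitions (\ref{2.16})--(\ref{eqn2.13}) of the kernels. I would first establish the identities for Schwartz $f,g$ (and, if convenient, for bounded compactly supported $V$), extending afterwards to $f,g \in L^2$ and $V \in L^{3/2,1}$ by density together with the uniform $L^1$- and $L^\infty$-boundedness of the kernels recorded in Lemma \ref{lema2.4}. Throughout, the regularization $\epsilon>0$ is precisely what makes each oscillatory time integral \emph{absolutely} convergent, so that Fubini legitimizes every interchange of time, frequency, and spatial integration performed below.

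For $W_{n+}^\epsilon$ I would start from (\ref{eq2.5}) and pass to the increment variables $s_1=t_1$, $s_j=t_j-t_{j-1}$ ($2\le j\le n$), which unfolds the simplex $0\le t_1\le\cdots\le t_n$ into the product $[0,\infty)^n$ and rewrites the operator so that the free propagator sitting to the left of the $j$-th copy of $V$ carries the increment $s_j$, while the rightmost factor $e^{-i(s_1+\cdots+s_n)H_0}$ acts on $f$. Using that $e^{-isH_0}$ acts as $e^{-is|\xi|^2}$ on the Fourier side and that multiplication by $V$ is convolution by $\widehat V$, the pairing $\langle W_{n+}^\epsilon f,g\rangle$ becomes an integral over the momenta, with each copy of $V$ contributing a factor $\widehat V(\xi_\ell-\xi_{\ell-1})$. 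Identifying the frequency of $f$ with the variable $\eta$ dual to the shift in $f(x-y)$ and measuring each intermediate momentum relative to $\eta$ turns the free dispersion in the $j$-th time integral into $|\xi_j+\eta|^2-|\eta|^2$; each scalar integral $\int_0^\infty e^{is_j(|\xi_j+\eta|^2-|\eta|^2)-\epsilon s_j}\dd s_j$ then yields exactly one resolvent denominator $(|\xi_j+\eta|^2-|\eta|^2\mp i\epsilon)^{-1}$. Integrating out the internal frequencies $\xi_1,\ldots,\xi_{n-1}$ reproduces the integrand of (\ref{2.17}); the $x_0$-integration sets the pre-scattering relative momentum $\xi_0$ to zero, so that $\widehat V(\xi_1-\xi_0)=\widehat V(\xi_1)$, in agreement with the spatial form of (\ref{2.19}). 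The factors of $i$ from the prefactor in (\ref{eq2.5}) and from the $n$ time integrals combine into the single overall sign displayed in (\ref{2.19}), independently of $n$.

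For $W_+^\epsilon$ I would avoid the expansion and compute directly from (\ref{eq2.6}). Decomposing $f$ into plane waves of frequency $\eta$, the factor $e^{-itH_0}$ contributes $e^{-it|\eta|^2}$, so the integral in (\ref{eq2.6}) reduces to $i\int_0^\infty e^{it(H-|\eta|^2)-\epsilon t}\dd t\,V$. Since $\|e^{it(H-|\eta|^2)-\epsilon t}\|=e^{-\epsilon t}$, this integral converges in operator norm, and by the spectral theorem (the same Laplace-transform identity that underlies $iR_0(\lambda)=(\chi_{[0,\infty)}(t)e^{itH_0})^{\wedge}(\lambda)$) it equals $\mp R_V(|\eta|^2\pm i\epsilon)\,V$, the boundary value and sign being fixed by that convention. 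Conjugating by the plane waves $e^{ix_0\eta}$ and $e^{-ix_1\eta}$ that arise from the $f(x-y)$ input and the $\overline{g(x)}$ pairing then produces exactly $\mc F_y T_+^\epsilon(x_0,x_1,\eta)$ as in (\ref{eqn2.13}), hence (\ref{2.20}); note that the resolvent identity $R_V(\lambda)V = I - R_V(\lambda)(H_0-\lambda)$ shows that the identity summand $I$ of (\ref{eq2.6}) is already encoded in $T_+^\epsilon$, so no separate term must be added.

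The main obstacle is not any single computation but the rigorous justification of these interchanges: the oscillatory time integrals are only conditionally convergent at $\epsilon=0$, and the frequency convolutions against $\widehat V$ must be controlled. I would dispatch this by carrying out all manipulations first for Schwartz data and regular $V$, where the $\epsilon>0$ regularization makes every iterated integral absolutely convergent and Fubini applies verbatim, and then passing to the general case via density and the bounds of Lemma \ref{lema2.4}. The only remaining delicate point is pinning down the correct boundary value ($+i\epsilon$ versus $-i\epsilon$) and the overall sign; this is pure bookkeeping of the factors of $i$ and of the Fourier conventions, settled once and for all by $iR_0(\lambda)=(\chi_{[0,\infty)}(t)e^{itH_0})^{\wedge}(\lambda)$.
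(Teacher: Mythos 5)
Your treatment of (\ref{2.19}) follows the paper's own route: plane-wave decomposition, unfolding of the time simplex into increment variables, explicit evaluation of each damped time integral into a resolvent denominator, and identification with (\ref{2.17}), with the $x_0$-integration enforcing $\xi_0=0$. One bookkeeping caveat: each time integral produces a factor of $i$ alongside its denominator, so starting from the prefactor $i^n$ of (\ref{eq2.5}) you obtain $i^n\cdot i^n=(-1)^n$, an $n$-dependent sign, not the uniform minus sign of (\ref{2.19}). The uniform sign requires the prefactor $(-1)^{n-1}i^n$ of (\ref{2.2}); the paper's proof silently uses that prefactor (the displays (\ref{2.2}) and (\ref{eq2.5}) are not consistent with each other), so your assertion that the factors of $i$ combine into an $n$-independent sign ``from (\ref{eq2.5})'' is not literally correct, though the intended computation is the same as the paper's.

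The genuine gap is your justification of (\ref{2.20}). You claim that the resolvent identity $R_V(\lambda)V=I-R_V(\lambda)(H_0-\lambda)$ shows the identity summand of (\ref{eq2.6}) is ``already encoded in $T_+^\epsilon$,'' so that no separate $\langle f,g\rangle$ term is needed. This cannot be right. Testing on a plane wave $e^{ix\eta}$ at $\lambda=|\eta|^2\pm i\epsilon$, one has $(H_0-\lambda)e^{ix\eta}=\mp i\epsilon\, e^{ix\eta}$, hence $R_V(\lambda)Ve^{ix\eta}=e^{ix\eta}\pm i\epsilon R_V(\lambda)e^{ix\eta}$, and the correction term is $O(1)$, not $o(1)$: since $|\eta|^2$ lies in the continuous spectrum, $\|R_V(\lambda)\|_{\B(L^2,L^2)}=1/\epsilon$, and for $V=0$ this term exactly cancels $e^{ix\eta}$, as it must, because then $T_+^\epsilon\equiv 0$. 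Even more simply: for fixed $\epsilon>0$ and small $V$, the pairing of $f$, $g$ against $T_+^\epsilon$ is $O(\|V\|)$, while $\langle W_+^\epsilon f,g\rangle=\langle f,g\rangle+O(\|V\|)$, so no identity-free formula of the shape (\ref{2.20}) can hold with $T_+^\epsilon$ defined by (\ref{eqn2.13}). What the direct computation yields --- and what the paper's own proof in fact derives --- is
\[
\langle W_+^\epsilon f,g\rangle=\langle f,g\rangle-\int_{\R^9}T_+^\epsilon(x_0,x,y)\,f(x-y)\,\ov g(x)\dd x_0\dd y\dd x,
\]
with the identity kept explicit and a minus sign; the display (\ref{2.20}) in the statement is a typo, as one also sees from Lemma \ref{lm2.4}, where the $\oast$-inverse of $I+T_{1+}^\epsilon$ is $I-T_+^\epsilon$, i.e.\ the wave operator corresponds to $I-T_+^\epsilon$ and not to $+T_+^\epsilon$. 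The correct move is to derive this formula and flag the discrepancy with the stated display, rather than to manufacture an argument that makes the identity term disappear.
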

We may as well assume for now that $V \in \mc S$, so that all the computations are easily justified. Still, these integrals become singular for $\epsilon = 0$, this being the reason why we introduced the parameter $\epsilon>0$.

All our conclusions concerning $T_+^\epsilon$ apply equally to $T_-^\epsilon$.

\begin{proof}
Firstly, note that
$$\begin{aligned}
\langle W_{1+}^\epsilon f, g\rangle &= i\int_0^{\infty} e^{it|\eta_1|^2-\epsilon t} \widehat V(\eta_1 - \eta_0) e^{-it|\eta_0|^2} \widehat f(\eta_0) \ov{\widehat g}(\eta_1) \dd \eta_1 \dd \eta_0 \dd t \\
&= \int_{\R^6} \frac {\widehat V(\eta_1 - \eta_0)} {|\eta_0|^2 - |\eta_1|^2 - i \epsilon} \widehat f(\eta_0) \ov {\widehat g}(\eta_1) \dd \eta_1 \dd \eta_0.
\end{aligned}
$$
Then, by redenoting $\eta_0 = \eta$,  $\eta_1 - \eta_0 = \xi$, we obtain exactly
\be\lb{eqn2.6}
\langle W_{1+}^\epsilon f, g\rangle = - \int_{\R^6} \frac {\widehat V(\xi)}{|\eta+\xi|^2- |\eta|^2 - i \epsilon} \widehat f(\eta) \ov{\widehat g}(\eta+\xi) \dd \eta \dd \xi.
\ee
More generally, when $n \geq 1$, the expression is given by
$$\begin{aligned}
\langle W_{n+}^\epsilon f, g\rangle &= (-1)^{n-1} i^n\int_{0=t_0\leq t_1\leq \ldots\leq t_n} \prod_{\ell = 1}^n \Big(e^{i(t_{\ell} - t_{\ell-1})|\eta_{\ell}|^2-(t_{\ell}-t_{\ell-1})\epsilon} \widehat V(\eta_{\ell}-\eta_{\ell-1}) \Big) \\
&e^{-it_n|\eta_0|^2} \cdot \widehat f(\eta_0) \ov{\widehat g}(\eta_n) \dd \eta_0 \ldots \dd \eta_n \dd t_1 \ldots \dd t_n,
\end{aligned}$$
where $t_0 = 0$. After integrating in $t_1, \ldots, t_n$, we obtain
$$\begin{aligned}
\langle W_{n+}^\epsilon f, g\rangle &= -\int \frac {\prod_{\ell=1}^n \widehat V(\eta_{\ell}-\eta_{\ell-1})} {\prod_{\ell=1}^n (|\eta_{\ell}|^2 - |\eta_0|^2 - i \epsilon)} \widehat f(\eta_0) \ov {\widehat g}(\eta_n) \dd \eta_0 \ldots \dd \eta_n.
\end{aligned}$$
Redenoting $\eta_0 = \eta$,  $\eta_{\ell} -\eta_0 = \xi_{\ell}$ leads to (\ref{eqn2.7}), for $\xi_0 = 0$:
\be\begin{aligned}\lb{eqn2.7}
\langle W_{n+}^\epsilon f, g\rangle = -\int \frac{\prod_{\ell=1}^n \widehat V(\xi_{\ell} - \xi_{\ell-1}) \dd \xi_1 \ldots \dd \xi_{n-1}}{\prod_{\ell=1}^n (|\eta+\xi_{\ell}|^2- |\eta|^2 - i \epsilon)} \widehat f(\eta) \ov{\widehat g}(\eta+\xi_n) \dd \eta \dd \xi_n.
\end{aligned}\ee
Then
$$\begin{aligned}
\langle W_{n+}^\epsilon f, g \rangle &= - \int_{\set R^6} \mc F_{x_0, x_n, y} T_{n+}^\epsilon(0, \xi_n, \eta) \widehat f(\eta) \ov {\widehat g}(\eta + \xi_n) \dd \eta \dd \xi_n \\
&= - \int_{\set R^9} T_{n+}^\epsilon(x_0, x, y) f(x-y) \ov g(x) \dd x_0 \dd y \dd x.
\end{aligned}$$
We can also write these operators as
$$
\mc F_{x_0, x_1, y} T_{1+}^\epsilon(\xi_0, \xi_1, \eta) = \mc F_{a, b} (R_0(|\eta|^2 + i\epsilon) V)(\xi_0+\eta, \xi_1+\eta),
$$
where $\mc F_{a, b}$ denotes the Fourier transform with respect to the two variables, and, in general,
$$
\mc F_{x_0, x_1, y} T_{n+}^\epsilon(\xi_0, \xi_1, \eta) = \mc F_{a, b} \big((R_0(|\eta|^2 + i\epsilon) V)^n\big)(\xi_0+\eta, \xi_n+\eta).
$$
Concerning the wave operator, we start from
$$\begin{aligned}
\langle W_+^\epsilon f, g\rangle &= I + i\int_0^{\infty} \int_{\R^6} \mc F_{a, b} \big(e^{itH-t\epsilon}V\big)(\eta_0, \eta_1) e^{-it|\eta_0|^2} \widehat f(\eta_0) \ov{\widehat g}(\eta_1) \dd \eta_1 \dd \eta_0 \dd t \\
&= I - \int_{\R^6} \mc F_{a, b} \big(R_V(|\eta_0|^2 + i\epsilon) V\big)(\eta_0, \eta_1) \widehat f(\eta_0) \ov{\widehat g}(\eta_1) \dd \eta_1 \dd \eta_0 \\
&= I - \int_{\R^6} \mc F_{a, b} (R_V(|\eta|^2 + i\epsilon) V)(\eta, \eta+\xi) \widehat f(\eta) \ov {\widehat g}(\eta + \xi) \dd \eta \dd \xi.
\end{aligned}
$$
For $T_+^\epsilon$ defined by (\ref{eqn2.13}), note that
\be\begin{aligned}\lb{2.24}
\mc F_{x_0, x_1, y} T_+^\epsilon(\xi_0, \xi_1, \eta) &= \mc F_{a, b} (R_V(|\eta|^2 + i\epsilon) V)(\xi_0+\eta, \xi_1+\eta).
\end{aligned}\ee
Then
$$\begin{aligned}\lb{eqn2.29}
\langle W_+^\epsilon f, g \rangle &= \langle f, g \rangle - \int_{\set R^6} \mc F_{x_0, x_1, y} T_+^\epsilon(0, \xi_1, \eta) \widehat f(\eta) \ov {\widehat g}(\eta + \xi_1) \dd \eta \dd \xi_1 \\
&= \langle f, g \rangle - \int_{\set R^9} T_+^\epsilon(x_0, x, y) f(x-y) \ov g(x) \dd x_0 \dd y \dd x.
\end{aligned}$$
\end{proof}


By (\ref{eqn2.6}), the kernel associated to $W_{1+}$ is $T_{1+}$, such that
$$
(\mc F_{x_0, x, y} T_{1+})(0, \xi, \eta) = \lim_{\epsilon \downarrow 0} (\mc F_{x_0, x, y} T_{1+}^\epsilon)(0, \xi, \eta) = \lim_{\epsilon \downarrow 0} \frac {\widehat V(\xi)}{|\eta+\xi|^2 - |\eta|^2 - i \epsilon}.
$$
Integrating in $x_0$, following (\ref{2.19}) and (\ref{2.20}), corresponds exactly to setting $\xi_0 = 0$ in (\ref{2.17}).

Two variables are entirely sufficient for representing $W_{n+}^\epsilon$, but we need one more variable for a meaningful algebra structure. This is the reason for having a third variable $x_0$ in (\ref{2.16}) and (\ref{2.17}).

For three-variable kernels $T(x_0, x_1, y)$ we define the composition law
\begin{definition}[Composition law $\oast$]
\be\begin{aligned}\lb{eq2.21}
(T_1 \oast T_2)(x_0, x_2, y) = \int_{\set R^6} T_1(x_0, x_1, y_1) T_2(x_1, x_2, y-y_1) \dd x_1 \dd y_1.
\end{aligned}\ee
\end{definition}
This law consists in convolution in the $y$ variable --- i.e.\ multiplication in the dual variable $\eta$ --- and composition of operators (\ref{comp_op}) in the other two.

Note that
$$
T_{m+}^\epsilon \oast T_{n+}^\epsilon = T_{(m+n)+}^\epsilon.
$$
Thus $\oast$ can be used to recursively generate all of $W_{n+}^\epsilon$, $n \geq 1$, starting from $W_{1+}^\epsilon$. Hence $\oast$ is the proper composition law to consider for these three-variable kernels. 
For example,
$$
\mc F_{x_0, x_2, y} T_{2+}(\xi_0, \xi_2, \eta) = \int_{\set R^3} \frac {\widehat V(\xi_2-\xi_1)}{|\xi_2 + \eta|^2 - |\eta|^2-i0} \cdot \frac {\widehat V(\xi_1-\xi_0)}{|\xi_1 + \eta|^2 - |\eta|^2-i0} \dd \xi_1.
$$

We write the resolvent identity as
\be\lb{2.29}
(I + R_0 V)^{-1} = I - R_V V;\ R_V = (I + R_0 V)^{-1} R_0.
\ee
Both sides of this equality --- also see (\ref{eqn2.13}) and (\ref{2.90}) --- are characterized by this lemma:
\begin{lemma}\lb{lemma2.3} Assume that $V \in L^{3/2, 1}$. Then $I + R_0(|\eta|^2 + i0)V$ is invertible on $L^{\infty}$ if and only if either $\eta \ne 0$ or $\eta = 0$ and zero is neither an eigenvalue, nor a resonance of $-\Delta + V$.

In this case, $I + R_0(|\eta|^2 \pm i\epsilon) V$ is also invertible on $L^{\infty}$ and its inverse is uniformly bounded in $\B(L^{\infty}, L^{\infty})$ for $\Im \eta > 0$.
\end{lemma}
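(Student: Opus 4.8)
The plan is to read this as a Fredholm alternative statement. I would first show that $R_0(|\eta|^2+i0)V$ is a compact operator on $L^\infty$, so that $I + R_0(|\eta|^2+i0)V$ is invertible exactly when it is injective, and then identify its kernel. Following the splitting $V = V_1 + V_2$ used in Lemma \ref{lem2.1}, with $V_1$ bounded of compact support and $\|V_2\|_{L^{3/2,1}}$ small, I would record that $R_0(|\eta|^2+i0) : L^{3/2,1} \to L^\infty$ is bounded uniformly in $\eta$ (the bound comes from $|R_0(|\eta|^2+i0)(x,y)| = \frac{1}{4\pi|x-y|}$, which lies in $L^{3,\infty}_y$, paired with $Vf \in L^{3/2,1}$ via Lorentz--H\"older). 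Hence $\|R_0(|\eta|^2+i0)V_2\|_{\B(L^\infty,L^\infty)} \les \|V_2\|_{L^{3/2,1}} < 1$, so $I + R_0(|\eta|^2+i0)V_2$ is invertible by Neumann series, and
\be
I + R_0(|\eta|^2+i0)V = \big(I + R_0(|\eta|^2+i0)V_2\big)\big(I + (I + R_0(|\eta|^2+i0)V_2)^{-1}R_0(|\eta|^2+i0)V_1\big).
\ee
It then suffices to invert the second factor, which is a compact perturbation of the identity because $R_0(|\eta|^2+i0)V_1$ is compact on $L^\infty$ (its kernel is smoothing and compactly supported in one variable).

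Next I would identify the kernel. At $\eta = 0$ the equation $(I + R_0(0)V)f = 0$ is precisely (\ref{1.10}), and Lemma \ref{lem2.1} shows a nonzero solution $f \in L^\infty$ exists if and only if zero is an eigenvalue or resonance of $H$; this is exactly the asserted dichotomy. For $\eta \neq 0$, any $f \in L^\infty$ with $f = -R_0(|\eta|^2+i0)Vf$ solves $(-\Delta + V)f = |\eta|^2 f$ distributionally and inherits the outgoing radiation condition from the $+i0$ boundary value. The absence of positive eigenvalues of $H$ together with a Rellich-type uniqueness theorem and unique continuation then forces $f \equiv 0$, so the operator is injective, hence invertible, for every $\eta \neq 0$.

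For the uniform bound on the inverse I would use analytic Fredholm theory: the family $\eta \mapsto R_0(|\eta|^2 \pm i\epsilon)V$ is operator-norm continuous on $\B(L^\infty,L^\infty)$ up to the real boundary, consists of compact operators, and $I$ plus this family is invertible at every boundary point by the first two steps. Since the set of invertible operators is open and inversion is continuous on it, I get a local uniform bound by continuity, and a global one by combining this with the behavior as $|\eta| \to \infty$: in the open upper half-plane the kernel carries the exponentially decaying factor $e^{-\Im\lambda|x-y|}$, which drives $\|R_0 V\|_{\B(L^\infty,L^\infty)}$ to zero, so the inverse is controlled by a Neumann series there. Near the branch point $\eta = 0$ I would establish norm-continuity of $\eta \mapsto R_0(|\eta|^2+i0)V$ up to $\eta = 0$ from the Lorentz-space bounds and dominated convergence applied to the difference kernel $\frac{e^{i|\eta||x-y|}-1}{|x-y|}$, and then transfer invertibility at $0$ to a full neighborhood.

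I expect two points to be the main obstacles. The first is the neighborhood of the branch point $\eta = 0$: because $R_0$ is only H\"older at the threshold, one must verify that $R_0(|\eta|^2+i0)V \to R_0(0)V$ in \emph{operator norm} (not merely strongly), uniformly in the direction of approach, to convert invertibility at $0$ into a uniform bound nearby. The second is the high-energy real-axis regime, where the operator norm of $R_0(|\eta|^2+i0)V$ does \emph{not} decay (an adversarial $L^\infty$ datum can align with the oscillation $e^{i|\eta||x-y|}$), so mere strong convergence $I + R_0 V \to I$ is insufficient and a genuine quantitative high-frequency resolvent estimate on $L^\infty$ is required to close the uniform bound.
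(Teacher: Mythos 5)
Your overall route coincides with the paper's: compactness of $R_0(|\eta|^2+i0)V$ on $L^\infty$ (the paper gets it by Arzel\`a--Ascoli, mapping $L^\infty$ into $C_0$ and approximating $V$ by smooth compactly supported potentials; your factorization through $V=V_1+V_2$ achieves the same), Fredholm's alternative, the spectral condition (\ref{1.10}) with Lemma \ref{lem2.1} at $\eta=0$, and a uniqueness theorem at $\eta\ne 0$, followed by continuity-plus-openness for uniform bounds on compact sets and a Neumann series deep in the upper half-plane. One caveat on the uniqueness step: with $V$ only in $L^{3/2,1}$, ``Rellich-type uniqueness plus unique continuation'' must be the rough-potential versions; the paper passes to the dual equation in $L^1$, sets $g=R_0(|\eta|^2-i0)f\in L^{3,\infty}$, upgrades its decay via Goldberg--Schlag \cite{golsch}, and then applies Ionescu--Jerison \cite{ionjer}. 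That is a citation-level repair, not a conceptual difference.

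The genuine gap is exactly the one you flag and leave open: the high-energy regime on the real axis. Your diagnosis there is correct, and --- notably --- it applies to the paper's own proof. Since the $\B(L^{\infty},L^{\infty})$ norm of an integral operator equals $\esssup_x \int |K(x,y)|\dd y$, one has, by (\ref{eq_3.55}), $\|R_0(\lambda^2+i0)V\|_{\B(L^{\infty},L^{\infty})}=\sup_x \int \frac{|V(y)|}{4\pi|x-y|}\dd y$ for \emph{every} $\lambda$ (the supremum is realized by $f(y)=e^{-i\lambda|x_0-y|}\sgn V(y)$, your ``adversarial datum''), so the paper's assertion that $R_0(\lambda+i0)V\to 0$ as $\lambda\to\infty$ after approximating $V$ by Schwartz potentials holds only in the strong sense, and strong convergence alone does not give uniform invertibility (collective compactness also fails, since the images oscillate on scale $\lambda^{-1}$). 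The quantitative input you say is ``required'' is an estimate on the \emph{iterated} kernel: for Schwartz $V$ the kernel of $(R_0(\lambda^2+i0)V)^2$ carries the oscillation $e^{i\lambda(|x-z|+|z-y|)}$ in the internal variable $z$, which no choice of datum can cancel, and stationary phase around the segment $[x,y]$ gives $\|(R_0(\lambda^2+i0)V)^2\|_{\B(L^{\infty},L^{\infty})}\les_V \lambda^{-1}$; then $(I+K)(I-K)=I-K^2$ converts this into invertibility of $I+K$ with uniformly bounded inverse, and density in $L^{3/2,1}$ handles general $V$. This is, in effect, how the paper's main argument treats high frequencies --- the proof of Theorem \ref{theorem_1.1} inverts using the square $T_{1+}\oast T_{1+}$ and the cutoffs (\ref{2.120})--(\ref{2.121}), whose legitimacy rests on the extra regularity of the squared kernel established in Lemma \ref{lm2.13} --- so your instinct that the missing estimate lives on an iterated kernel rather than on $R_0V$ itself is the right one; your proposal simply stops short of supplying it.
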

By (\ref{2.29}), $I + R_0 V$ being invertible on $L^{\infty}$ is equivalent to $R_V$ being bounded from $L^{3/2, 1}$ to $L^{\infty}$.
\begin{proof}
Firstly, we note that $R_0(|\eta|^2 + i0) V$ is a bounded operator from $L^{\infty}$ to $C_0$ (the space of continuous functions that vanish at infinity). By the Arzel\`{a}-Ascoli Theorem, a set $A$ is precompact in $C_0$ if and only if:
\begin{list}{\labelitemi}{\leftmargin=1em}
\item[1.] it is equicontinuous:
\be\lb{1st_cond}
\forall \epsilon>0\ \exists \delta>0\ \forall |y|<\delta\ \forall a \in A\ \|a(\cdot -y) - a\|_{\infty} < \epsilon;
\ee
\item[2.] it has uniform decay at infinity:
\be\lb{2nd_cond}
\forall \epsilon>0\ \exists R\ \forall a \in A\ \|\chi_{|x|>R}(x) a(x)\|_{\infty} < \epsilon.
\ee
\end{list}

Then, assume that $V$ is smooth and compactly supported; consider the image through $R_0(|\eta|^2 + i0) V$ of a bounded set in $L^{\infty}$. (\ref{2nd_cond}) holds due to the compactness of $V$'s support. (\ref{1st_cond}) is implied by the gain in regularity due to convolution. By approximating $V \in L^{3/2, 1}$ with smooth, compactly supported potentials, compactness follows in this general case as well.



By Fredholm's alternative, $I + R_0(|\eta|^2 + i0) V$ is invertible if and only if the equation
$$
f = -V R_0(|\eta|^2 - i0) f
$$
has no nonzero solution within $L^1$. Assume it did; then let $g = R_0(|\eta|^2 - i0) f$; $g$ is in $L^{3, \infty}$ and satisfies the equation
$$
g= -R_0(|\eta|^2 - i0) V g.
$$
Since $V$ is real-valued, the results of Goldberg--Schlag \cite{golsch} imply that $g \in \langle x \rangle^{1/2-\epsilon} L^2$. By Ionescu--Jerison \cite{ionjer}, when $\eta \ne 0$ this leads to $g=0$.

When $\eta=0$, we refer to the spectral condition (\ref{1.10}) and to Lemma \ref{lem2.1}. Thus, the inverse $(I + R_0(|\eta|^2 + i0) V)^{-1}$ exists for every $\eta \in \set R^3$.

Due to the continuity of the mapping $\lambda \mapsto R_0(\lambda + i0) V \in \B(L^{\infty}, L^{\infty})$, the inverses have uniformly bounded norms when $\lambda$ is in a compact set.

By approximating $V$ with Schwartz-class potentials, we also obtain that $R_0(\lambda + i0) V \to 0$ as $\lambda \to \infty$, so $(I + R_0(\lambda + i0) V)^{-1}$ is bounded for $|\lambda|>>0$.

This extends to any set in the complex plane at a positive distance away from the eigenvalues --- in particular to the whole right half-plane.
\end{proof}

Next, we establish a framework in which $\oast$ is a bounded operation, which we use to express the relation between $T_{1+}^\epsilon$ and $T_+^\epsilon$.
\begin{lemma}\lb{lm2.4} Let
$$\begin{aligned}
Z = \{T(x_0, x_1, y) \mid & T \in \widehat {L^{\infty}_y} \B(L^{\infty}_{x_0}, L^{\infty}_{x_1})\}.
\end{aligned}$$
Then $Z$ is a Banach algebra under $\oast$.

If $V \in L^{3/2, 1}$ then $T_{1+}^\epsilon$ defined by (\ref{2.16}) is in $Z$ and 
$\mc F_y T_{1+}^\epsilon$ is given by
\be\lb{2.90}\begin{aligned}
\mc F_y T_{1+}^\epsilon(x_0, x_1, \eta) &= V(x_0) \frac {e^{i|x_0-x_1| \sqrt {|\eta|^2 + i\epsilon} - i(x_0-x_1) \eta}}{|x_0-x_1|} \\
&= V(x_0) e^{-ix_0 \eta} R_0(|\eta|^2+i\epsilon)(x_0, x_1) e^{ix_1 \eta}.
\end{aligned}\ee
If, in addition, $H = -\Delta+V$ has neither eigenvectors, nor resonances at zero, then $T_+^\epsilon$ 
also belongs to $Z$ and
\be\lb{2.37}
(I + T_{1+}^\epsilon) \oast (I - T_+^\epsilon) = (I - T_+^\epsilon) \oast (I + T_{1+}^\epsilon) = I.
\ee
\end{lemma}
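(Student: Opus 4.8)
The plan is to push the entire lemma through the partial Fourier transform $\mc F_y$, which converts the product $\oast$ into pointwise-in-$\eta$ operator composition and reduces every claim to a statement about a single scalar spectral parameter $\lambda = |\eta|^2 + i\epsilon$. I would first make precise the remark following (\ref{eq2.21}): a direct computation gives $\mc F_y(T_1\oast T_2)(x_0,x_2,\eta) = \int_{\R^3}\mc F_y T_1(x_0,x_1,\eta)\,\mc F_y T_2(x_1,x_2,\eta)\dd x_1$, i.e. convolution in $y$ becomes a product in $\eta$ and the $\dd x_1$–integral is exactly operator composition as in (\ref{comp_op}). Thus $T\mapsto \mc F_y T$ is an isometric isomorphism of $Z$ onto $L^\infty_\eta\big(\B(L^\infty_x,L^\infty_x)\big)$ intertwining $\oast$ with the pointwise composition product. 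Since $\B(L^\infty_x,L^\infty_x)$ is a Banach algebra under composition and, for any Banach algebra $\mc A$, the essentially bounded $\mc A$–valued functions with pointwise product and $\esssup$ norm again form a Banach algebra (submultiplicativity is checked at each $\eta$, completeness is standard), it follows at once that $Z$ is a Banach algebra; its unit is $\delta(x_0-x_1)\delta(y)$, whose $\mc F_y$ is the identity of $L^\infty_x$ for every $\eta$.

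For $T_{1+}^\epsilon$ I would invert $\mc F_{x_0,x_1}$ in the defining formula (\ref{2.16}). With $\lambda=|\eta|^2+i\epsilon$, the factor $(|\xi_1+\eta|^2-|\eta|^2-i\epsilon)^{-1}=(|\xi_1+\eta|^2-\lambda)^{-1}$ is the symbol of a phase–conjugate of the free resolvent, which together with the $V$–factor yields (\ref{2.90}), the two displayed forms being identified through the explicit kernel (\ref{eq_3.55}). The structural point is that $\mc F_y T_{1+}^\epsilon(\cdot,\cdot,\eta)=\Phi_\eta\big(R_0(\lambda)V\big)$, where $\Phi_\eta$ is the isometric automorphism of $\B(L^\infty_x)$ implemented by conjugation with the unimodular phases in (\ref{2.90}). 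Because $\Phi_\eta$ is an $L^\infty$–isometry, $\|\mc F_y T_{1+}^\epsilon(\cdot,\cdot,\eta)\|_{\B(L^\infty)}=\|R_0(\lambda)V\|_{\B(L^\infty)}$, which I would bound by $\sup_x\int|V(z)|(4\pi|x-z|)^{-1}\dd z=\|R_0(0)|V|\|_{L^\infty}\les\|V\|_{L^{3/2,1}}$ — the exponential in $R_0(\lambda)(x,z)$ has modulus $\le 1$ since $\Im\sqrt\lambda\ge 0$, and $|x|^{-1}\in L^{3,\infty}$ gives the weak-Young estimate. This is uniform in $\eta$ and $\epsilon$, so $T_{1+}^\epsilon\in Z$.

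For $T_+^\epsilon$, the defining relation (\ref{2.24}) shows that the \emph{same} automorphism gives $\mc F_y T_+^\epsilon(\cdot,\cdot,\eta)=\Phi_\eta\big(R_V(\lambda)V\big)$; the opposite literal phase patterns in (\ref{2.90}) and (\ref{2.24}) are reconciled by the kernel convention (\ref{comp_op}), since $V$ and the phases commute as multiplication operators. Writing $R_V(\lambda)V=(I+R_0(\lambda)V)^{-1}(R_0(\lambda)V)$ from (\ref{2.29}), I get $\|\mc F_y T_+^\epsilon(\cdot,\cdot,\eta)\|_{\B(L^\infty)}=\|R_V(\lambda)V\|_{\B(L^\infty)}\le\|(I+R_0(\lambda)V)^{-1}\|_{\B(L^\infty)}\,\|R_0(\lambda)V\|_{\B(L^\infty)}$, where Lemma \ref{lemma2.3} bounds the first factor uniformly over all $\eta\in\R^3$ and $\epsilon\ge 0$ and the second factor is $\les\|V\|_{L^{3/2,1}}$ as above; hence $T_+^\epsilon\in Z$. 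Finally, to obtain (\ref{2.37}) I apply $\mc F_y$: since $\oast$ becomes composition and $\Phi_\eta$ is an automorphism with $\Phi_\eta(I)=I$, the quantity $\mc F_y\big[(I+T_{1+}^\epsilon)\oast(I-T_+^\epsilon)\big](\eta)$ equals the composition $(I-\mc F_yT_+^\epsilon(\eta))\circ(I+\mc F_yT_{1+}^\epsilon(\eta))=\Phi_\eta\big((I-R_V(\lambda)V)(I+R_0(\lambda)V)\big)=\Phi_\eta(I)=I$, where $(I-R_VV)(I+R_0V)=I$ is exactly (\ref{2.29}); the reverse order is identical. As this holds for every $\eta$, (\ref{2.37}) holds in $Z$.

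The only genuinely substantive step is the \emph{uniformity} in $\eta$ (and in $\epsilon\downarrow 0$) of the $\B(L^\infty)$ estimates, above all the uniform invertibility of $I+R_0(|\eta|^2+i\epsilon)V$ on $L^\infty$: this is precisely Lemma \ref{lemma2.3}, where the no-eigenvalue/no-resonance hypothesis enters to cover $\eta=0$, while the decay $R_0(\lambda)V\to 0$ controls $|\eta|\to\infty$. Everything else — inverting the Fourier transform in (\ref{2.16}) and tracking the phase factors so that $T_{1+}^\epsilon$ and $T_+^\epsilon$ appear as images of $R_0(\lambda)V$ and $R_V(\lambda)V$ under one and the same automorphism $\Phi_\eta$ — is routine bookkeeping.
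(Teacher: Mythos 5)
Your proposal is correct and takes essentially the same route as the paper's own (much terser) proof: pass to the Fourier transform in $y$ so that $\oast$ becomes pointwise-in-$\eta$ operator composition, bound $\mc F_y T_{1+}^\epsilon$ uniformly by the $L^{3/2,1}$--$L^{3,\infty}$ pairing, bound $\mc F_y T_+^\epsilon$ via the uniform invertibility of $I+R_0(|\eta|^2+i\epsilon)V$ from Lemma \ref{lemma2.3}, and deduce (\ref{2.37}) from the resolvent identity (\ref{2.29}). Your conjugation-automorphism $\Phi_\eta$ is just an explicit bookkeeping device for the phase factors that the paper leaves implicit, and it correctly handles the point that $T_{1+}^\epsilon$ and $T_+^\epsilon$ must be images of $R_0(\lambda)V$ and $R_V(\lambda)V$ under one and the same conjugation.
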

The main shortcoming of using the space $Z$ is that $T_+$'s being in $Z$ does not imply the boundedness of the wave operators, so the proof of the latter fact uses a different space.
\begin{proof}[Proof of Lemma \ref{lm2.4}]
The algebra properties of $Z$ under $\oast$ follow from those of the ordinary composition of operators.

The Fourier transform of $T_{1+}^{\epsilon}$ is given by (\ref{2.90}) as a consequence of (\ref{2.16}) --- which contains the resolvent in $\xi_1$, translated in frequency ---, so the uniform boundedness follows from the pairing of $L^{3/2, 1}$ and $L^{3, \infty}$ functions.

$T_\epsilon^+$ is in $Z$ if its Fourier transform (\ref{eqn2.13}) is uniformly bounded, which is guaranteed by Lemma \ref{lemma2.3} if $V \in L^{3/2, 1}$ and $H$ fulfills the spectral condition~(\ref{1.10}).

Formula (\ref{2.37}) follows from the resolvent identity (\ref{2.29}), in which both sides are bounded on the right half-plane, away from eigenvalues.
\end{proof}

\subsection{Spaces of functions}
In this section we exhibit a structure for $W_{n+}$ that enables us to apply Wiener's theorem.


Recall that $s$ are elements of $\ISO(3) = \{s \in \B(\R^3, \R^3) \mid s^* s = I\}$, i.e. isometries: the identity operator, rotations, reflections, and improper rotations.

 
\begin{definition} For $g \in L^{\infty}$, $s \in \ISO(3)$, and $y \in \R^3$, let elementary transformations be mappings $e_{g, y, s}:L^p_x \to L^p_x$ of the form
$$
e_{g, y, s} f(x) := g(x) f(s x - y).
$$
\end{definition}
Note that elementary transformations are bounded on $L^p$, $1 \leq p \leq \infty$.

Our goal is to prove that $W_\pm$ and $W_\pm^*$ belong to the operator space $X$:
\begin{definition} Let $X$ the space of two-variable kernels
\be\begin{aligned}\lb{2.9}
X := &\{\frak X \in \B(L^{\infty}, L^{\infty})\mid (\frak X f)(x) = \int_{\R^3} \frak X(x, y) f(x-y)  \dd y, \\
&\frak X(x, y) = \int_{\ISO(3)} g_{s, y + x - s x}(x) \dd s, \\
&\int_{\R^3} \int_{\ISO(3)} \dd \|g_{s, y}\|_{L^{\infty}_x} < \infty\},
\end{aligned}\ee
\end{definition}

$g_{\omega, y}(x) \in \mc M_{\omega, y}$ is an $L^{\infty}$-valued measure, possibly singular in $y$ and $\omega$, such that $\|g_{\omega, y}\|_{L^{\infty}} \in \mc M_{\omega, y}$ is a finite-mass positive Borel measure.

$X$ is the space of integrable combinations of elementary transformations $e_{g, y, s}$. Each $\frak X \in X$ has the kernel $\frak X(x-y, x)$, i.e.
\be\lb{xf}\begin{aligned}
(\frak X f)(x) &= \int_{\R^3} \frak X(x, y) f(x-y)  \dd y \\
&= \int_{\ISO(3)} \int_{\set R^3} f(s x - y) \dd g_{s, y}(x).
\end{aligned}\ee
When $f$ is bounded (\ref{xf}) is absolutely convergent for each $x$.

Since elementary transformations are bounded on $L^p$ for $1 \leq p \leq \infty$, by Minkowski's inequality the same is true for every $\frak X \in X$. $\frak X \in X$ are distributions on $\R^6$ and $\|\frak X\|_{\B(L^p_{x_0}, L^p_{x_1})} \les \|\frak X\|_X$.

$X$ is also an algebra under operator composition --- a noncommutative semigroup algebra. Furthermore, we prove in the sequel that $W_{n+}^\epsilon$, $W_{\pm}$, and $W_{\pm}^*$ belong to $X$ for all $n$.

Even so, the composition operation in $X$ does not generate~$W_{n+}^\epsilon$. We introduce the extra structure of the space $Y$ defined below in order to recursively generate $W_{n+}^\epsilon$ and apply Wiener's theorem in $Y$.

\begin{definition} Let $Y$ be the space of three-variable kernels
\be\lb{2.31}\begin{aligned}
Y &:= \{T(x_0, x_1, y) \in Z \mid \forall s \in \ISO(3)\ T(x_0, x_1, y + x_0-s x_0) \in Z, \\
&\forall g \in L^{\infty}\ \forall s \in \ISO(3)\ \int_{\R^3} g(x_0) T(x_0, x, y+x_0-s x_0) \dd x_0 \in X\}.
\end{aligned}\ee
\end{definition}

We allow both singular and continuous locally integrable Borel measures to belong to $X$ or to $Y$, according to the definitions. Then both $X$ and $Y$ already contain their own identity elements, in~the~form~of
$$
I(x, y) = \delta_I(s) \delta_0(y)
$$
(times the constant function one in the $x$ variable) for $X$ and
\be\lb{ident}
I(x_0, x_1, y) = \delta_{x_0}(x_1) \delta_0(y) = \delta_{x_1}(x_0) \delta_0(y)
\ee
for $Y$.

For $\chi \in L^1_y$, we distinguish elements of the form $\frak X(x, y) = \chi(y) \in X$ given by $g_{y, s}(x) = \delta_I(s) \chi(y)$, respectively $T(x_0, x_1, y) = \delta_{x_0}(x_1) \chi(y) \in Y$. The latter represents convolution with $\chi$ within $Y$.

Except for the identity (\ref{ident}), elements of $Y$ that appear in the sequel have the further regularity property that
$$
\int_{\ISO(3)} \dd\|g_{s, y}(x)\|_{L^{\infty}_x} \in L^1_y.
$$
We can also add this property to the definition of $Y$ and put the identity element back in separately, as in \cite{becgol}.

The norms on $X$ and $Y$ are the natural ones:
$$\begin{aligned}
\|\frak X\|_X &= \inf\Big\{ \int_{\ISO(3)} \int_{\set R^3} d\|g_{s, y}\|_{L^{\infty}_x} \mid  \frak X(x, y) = \int_{\ISO(3)} g_{s, y + x - s x}(x) \dd \omega\Big\}
\end{aligned}$$
and
$$\begin{aligned}
\|T\|_Y &= \sup_{\substack{\|f\|_{\infty} = 1\\ s \in \ISO(3)}} \Big\|\int_{\R^3} f(x_0) T(x_0, x, y+x_0-s x_0) \dd x_0\Big\|_X.
\end{aligned}$$
Note that both norms are invariant under translation in $y$.

We prove the Wiener-type theorem in $Y$ because $Y$ has the required Wiener algebra structure; see Lemma \ref{yalg}.

For an elementary transformation $e_{g, y_0, s}$ and for $T(x_0, x_1, y) \in Y$, let the \emph{contraction} of $T$ by $e_{g, y_0, s}$ be
$$
(e_{g, y_0, s}T)(x, y) := \int_{\R^3} g(x_0) T(x_0, x, y + x_0 - s x_0) \dd x_0.
$$
The translation parameter $y_0$ need not enter the expression of $e_{g, y_0, s} T$.

Thus $Y$ consists of those three-variable kernels $T$ whose contraction $e_{g, y_0, s} T$ belongs to $X$ for any elementary transformation $e_{g, y_0, s}$. Note that
$$
\|e_{g, y_0, s} T\|_X \les \|g\|_{L^{\infty}_x} \|T\|_Y.
$$

The wave operators, which are in $X$, will be obtained by contracting elements of $Y$ as per Lemma \ref{lemma2.1}.

As a stronger alternative to integrability in (\ref{2.9}), we also use the weighted integrability condition
\be\lb{2.13}
\int_{\R^3} \langle y \rangle^{\beta} \Big(\int_{\ISO(3)} \dd \|g_{s, y}(x)\|_{L^{\infty}_x}\Big) \dd y < \infty.
\ee
Denote the spaces formed under condition (\ref{2.13}) for $0 \leq \beta < 1$ by $X_{\beta} \subset X$ and $Y_{\beta} \subset Y$:
\begin{definition} For $0 \leq \beta < 1$ let
$$\begin{aligned}
X_\beta := &\big\{ \frak X \in X \mid \frak X f(x) = \int_{\R^3} \frak X(x, y) f(x-y)  \dd y, \\
&\frak X(x, y) = \int_{\ISO(3)} g_{s, y + x - s x}(x) \dd \omega,\ (\ref{2.13}) \text{ holds}\big\},
\end{aligned}$$
respectively
$$\begin{aligned}
Y_\beta &:= \{T(x_0, x_1, y) \in Y \mid \forall g \in L^{\infty},\ \forall s \in \ISO(3),\\
&\int_{\R^3} g(x_0) T(x_0, x, y+x_0-s x_0) \dd x_0 \in X_\beta\}.
\end{aligned}$$
\end{definition}

We consider the range of spaces $X_\beta$ and $Y_\beta$ for $0\leq\beta<1$. $\beta \geq 0$ ensures that $X_{\beta}$ and $Y_{\beta}$ are algebras, while $\beta$ near to $1$ is almost optimal generically for the wave operator. See Lemma \ref{lema2.9} for results within weighted spaces.

For a rapidly decaying potential, the first term in the asymptotic expansion of $T_{1+}$ will be of order $\int_{\ISO(3)} \dd \|g_{s, y}\| \sim \langle y \rangle^{-4}$ by (\ref{2.81tertt}) and (\ref{2.81tert}), so it will not belong to~$Y_1$.

To reflect this, we define $X_{1+\epsilon}$ and $Y_{1+\epsilon}$ using the condition
\be\lb{cond_asimp}\begin{aligned}
&\exists g^1_{s, y}\ s.t.\ \int_{\ISO(3)} \dd\|g^1_{s, y}(x)\|_{L^{\infty}_x} \les \langle y \rangle^{-4},\\
&\int_{\R^3} \langle y \rangle^{1+\epsilon} \Big(\int_{\ISO(3)} \dd \|g_{s, y}(x) - g^1_{s, y}(x)\|_{L^{\infty}_x}\Big) \dd y < \infty.
\end{aligned}\ee
\begin{definition} For $0 < \epsilon < 1$ let
$$\begin{aligned}
X_{1+\epsilon} := &\big\{ \frak X \in X \mid \frak X f(x) = \int_{\R^3} \frak X(x, y) f(x-y)  \dd y, \\
&\frak X(x, y) = \int_{\ISO(3)} g_{s, y + x - s x}(x) \dd \omega,\ (\ref{cond_asimp}) \text{ holds}\big\},
\end{aligned}$$
respectively
$$\begin{aligned}
Y_{1+\epsilon} &:= \{T(x_0, x_1, y) \in Y \mid \forall g \in L^{\infty},\ \forall \omega \in S^2,\\
&\int_{\R^3} g(x_0) T(x_0, x, y) \dd x_0 \in X_{1+\epsilon},\\
&\int_{\R^3} g(x_0) T(x_0, x, y+x_0-s x_0) \dd x_0 \in X_{1+\epsilon}\}.
\end{aligned}$$
\end{definition}
By this definition, we isolate the first term in the asymptotic expansion of $T \in Y_{1+\epsilon}$ and state that all other terms have faster decay on average. We prove results in this setting by Lemma \ref{asimptotic}.

\begin{lemma}\lb{yalg}
$Y$ defined by (\ref{2.31}), $Y_{\beta}$ defined by (\ref{2.13}), and $Y_{1+\epsilon}$ defined by (\ref{cond_asimp}) are Banach algebras with the operation $\oast$, see~(\ref{eq2.21}). Moreover, $Y_{1+\epsilon} \subset Y_{\beta} \subset Y$.
\end{lemma}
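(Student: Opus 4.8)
The plan is to prove the three algebra properties together, since the verification of the Banach-algebra axioms is structurally identical in each case and only the bookkeeping of the decay weight changes. The core of the argument is to show that $\oast$ is a bounded bilinear operation on $Y$, i.e. that $\|T_1 \oast T_2\|_Y \lesssim \|T_1\|_Y \|T_2\|_Y$, together with associativity and the presence of the identity element (\ref{ident}). Since the $Y$-norm is defined by testing contractions $e_{g,y_0,s} T$ against the $X$-norm, the first reduction I would make is to express the contraction of a $\oast$-product in terms of contractions of the factors. Unwinding the definitions of $\oast$ in (\ref{eq2.21}) and of the contraction, for a fixed $g \in L^\infty$ and $s \in \ISO(3)$ one has, after the change of variables that absorbs the shift $x_0 - s x_0$ into the $y$-argument,
\be\lb{contr_prod}
e_{g, y_0, s}(T_1 \oast T_2)(x, y) = \int_{\R^3} \int_{\R^3} (e_{g, y_0, s} T_1)(x_1, y_1)\, T_2(x_1, x, y - y_1 + \ldots)\dd x_1 \dd y_1,
\ee
so that the outer contraction of $T_1 \oast T_2$ factors as a contraction of $T_1$ by $e_{g,y_0,s}$ followed by an $X$-level action built from $T_2$.

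The key structural point I would then isolate is that $X$ is a left module over the semigroup of elementary transformations and that composing an element of $X$ with the "operator part'' of a kernel in $Y$ again lands in $X$ with the expected norm bound. Concretely, if $\frak X \in X$ is an integrable combination $\int_{\ISO(3)}\int_{\R^3} e_{g_{s,y}, y, s}\dd s \dd y$ and $T_2 \in Y$, then I would show that the composite $\frak X$ acting first and then the operator associated to $T_2$ is again an integrable combination of elementary transformations: this is exactly where the semigroup law on $\ISO(3) \ltimes \R^3$ (composition of affine isometries) enters, since $e_{g_1, y_1, s_1} \circ e_{g_2, y_2, s_2}$ is again of the form $e_{g, y, s}$ with $s = s_2 s_1$ and a shifted translation. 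The mass bound follows from Fubini and the triangle inequality, giving
\be
\|T_1 \oast T_2\|_Y \les \|T_1\|_Y \|T_2\|_Y.
\ee
For the weighted cases, the translation parameters add: when the $y$-mass of the first factor decays like $\langle y\rangle^{-\beta}$ and that of the second like $\langle y \rangle^{-\beta}$, the convolution structure in $y$ together with the inequality $\langle y_1 + y_2\rangle^{\beta}\lesssim \langle y_1\rangle^{\beta}\langle y_2\rangle^{\beta}$ (valid for $\beta \geq 0$) yields the $\langle y\rangle^{-\beta}$ decay of the product. This is precisely why $\beta \geq 0$ is required for $Y_\beta$ to be an algebra.

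For the inclusions $Y_{1+\epsilon} \subset Y_\beta \subset Y$, the middle inclusion is immediate because the weighted integrability condition (\ref{2.13}) with $\beta \geq 0$ dominates the unweighted one. The inclusion $Y_{1+\epsilon} \subset Y_\beta$ for $0 \leq \beta < 1$ requires checking that a kernel satisfying (\ref{cond_asimp}) also satisfies (\ref{2.13}): the leading term $g^1_{s,y}$ has mass $\lesssim \langle y\rangle^{-4}$, which is integrable against $\langle y\rangle^{\beta}$ for $\beta < 3$, hence certainly for $\beta < 1$, and the remainder is controlled directly by the $\langle y\rangle^{1+\epsilon}$-weighted condition in (\ref{cond_asimp}), which dominates the $\langle y\rangle^{\beta}$ weight for $\beta \leq 1 < 1+\epsilon$. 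The main obstacle I anticipate is the $Y_{1+\epsilon}$ algebra closure: one must verify that the $\oast$-product of two kernels each carrying a distinguished leading term $g^1_{s,y}$ of size $\langle y\rangle^{-4}$ again admits such a decomposition, with the new leading term built from the two old ones and a genuinely integrable remainder. Here the delicate computation is that the convolution of two $\langle y\rangle^{-4}$ profiles, and the cross terms pairing a leading profile with a $\langle y\rangle^{-1-\epsilon}$ remainder, must be separated into a new $\langle y\rangle^{-4}$ piece and a remainder decaying faster than $\langle y\rangle^{-1-\epsilon}$; this rests on the elementary but careful estimate that $\int_{\R^3}\langle y_1\rangle^{-4}\langle y - y_1\rangle^{-1-\epsilon}\dd y_1 \lesssim \langle y\rangle^{-1-\epsilon}$, together with the identification of which part of the convolution genuinely behaves like $\langle y \rangle^{-4}$.
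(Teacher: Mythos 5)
Your proposal follows the paper's own proof essentially step for step: the decisive move in both is to rewrite the contraction of $T_1 \oast T_2$ as the $X$-module action of the contraction of $T_1$ on $T_2$ (the paper's bound $\|\frak X T\|_X \les \|\frak X\|_X \|T\|_Y$), then handle $Y_\beta$ via submultiplicativity $\langle y_1+y_2\rangle^\beta \les \langle y_1\rangle^\beta\langle y_2\rangle^\beta$ for $\beta \geq 0$, and $Y_{1+\epsilon}$ via convolution estimates separating a $\langle y\rangle^{-4}$ leading part from a $\langle y\rangle^{-1-\epsilon}$-weighted integrable remainder. Two harmless slips worth noting: in $\R^3$ the profile $\langle y\rangle^{-4}$ is $\langle y\rangle^{\beta}$-integrable only for $\beta<1$ (not $\beta<3$), and since the remainder class is $\langle y\rangle^{-1-\epsilon}L^1$ (weighted integrability, not pointwise decay), the cross terms are properly handled as in the paper by splitting the convolution integral over the regions $|y-y_1|\leq|y_1|$ and $|y-y_1|\geq|y_1|$, rather than by your pointwise bound $\int_{\R^3}\langle y_1\rangle^{-4}\langle y-y_1\rangle^{-1-\epsilon}\dd y_1 \les \langle y\rangle^{-1-\epsilon}$.
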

Thus, provided $I + T_{1+}^\epsilon$ is invertible in $Y$, hence in $Z$, its inverse will be $I - T_+^\epsilon$ both in $Z$ and in $Y$, hence $T_+^\epsilon \in Y$.
\begin{proof}

The fact that $\oast$ is associative and non-commutative is clear in $Z$; the unit element is given by (\ref{ident}). Since $Y \subset Z$, the same is true in $Y$.

Performing a Fourier transform, $\oast$ takes the from
\be\lb{comp}
(\mc F_{x_0, x_2, y} T_1 \oast T_2)(\xi_0, \xi_2, \eta) = \int_{\set R^3} (\mc F_{x_0, x_1, y} T_1)(\xi_0, \xi_1, \eta) (\mc F_{x_1, x_2, y} T_2)(\xi_1, \xi_2, \eta) \dd \xi_1.
\ee
That is, the Fourier transform turns convolution in $y$ into pointwise multiplication in $\eta$, while the Fourier transform in $\xi_0$, $\xi_1$, and $\xi_2$ preserves the composition of operators in those variables.

Since kernels $\frak X \in X$ are integrable combinations of elementary transformations, for $\frak X \in X$ and $T \in Y$ we define the contraction $\frak X T$:
$$\begin{aligned}
(\frak X T)(x, y) &:= \int \frak X(x_0, y_0) T(x_0, x, y-y_0) \dd y_0 \dd x_0 \\
&=\int_{\R^6 \times \ISO(3)} T(x_0, x, y - y_0) \dd g_{s, y_0+x_0-s x_0}(x_0) \dd x_0 \\
&=\int_{\R^6 \times \ISO(3)} T(x_0, x, y - y_0 + x_0 - s x_0) \dd g_{s, y_0}(x_0) \dd x_0.
\end{aligned}$$

The definitions of $X$ and $Y$ imply that $\frak X T \in X$ and
\be\lb{xt}
\|\frak X T\|_X \les \|\frak X\|_X \|T\|_Y.
\ee

Take $T_1$ and $T_2$ in $Y$ and let $T_3 = T_1 \oast T_2$. Testing the definition of $Y$ for $T_3$, we see that
$$\begin{aligned}
&\int f(x_0) T_3(x_0, x_2, y + x_0 - s x_0) \dd x_0 = \\
&= \int_{\set R^9} f(x_0) T_1(x_0, x_1, y_1 + x_0 - s x_0) T_2(x_1, x_2, y-y_1) \dd x_1 \dd y_1 \dd x_0.
\end{aligned}$$
Integrating in $x_0$, in each case we obtain an expression of the form $\frak X T_2$ for $\frak X \in X$ with $\|\frak X\|_X \les \|T_1\|_Y$. This expression belongs to $X$ by (\ref{xt}). Thus, $T_3 = T_1 \oast T_2 \in Y$ and
$$
\|T_1 \oast T_2\|_Y \les \|T_1\|_Y \|T_2\|_Y.
$$

The same reasoning applies to $Y_{\beta}$. Note that $\langle y \rangle^{-\beta} L^1_y$ is an algebra under convolution for $\beta \geq 0$, by the triangle inequality. Then $X_\beta$ and consequently $Y_\beta$ are also algebras.

Finally, observe that
$$\begin{aligned}
\int_{\R^3} \langle x -y \rangle^{-4} \langle y \rangle^{-4} \dd y &= 2\int_{|x-y|\leq|y|} \langle x -y \rangle^{-4} \langle y \rangle^{-4} \dd y \\
&\les \int_{|y|\geq |x|/2} \langle x-y \rangle^{-4} \langle y \rangle^{-4} \dd y \\
&\les \langle x \rangle^{-4} \int_{\R^3} \langle x-y \rangle^{-4} \dd y \les \langle x \rangle^{-4}.
\end{aligned}$$
Likewise, if $f \in \langle x \rangle^{-1-\epsilon} L^1_x$,
$$
\int_{\R^3} \langle y \rangle^{-4} f(x-y) \dd y \leq \int_{|x-y|\leq|y|} \langle y \rangle^{-4} f(x-y) \dd y + \int_{|x-y|\geq|y|} \langle y \rangle^{-4} f(x-y) \dd y;
$$
then,
$$
\langle x \rangle^{1+\epsilon} \int_{|x-y|\geq|y|} \langle y \rangle^{-4} f(x-y) \dd y \les \int_{|x-y|\geq|y|} \langle y \rangle^{-4} \langle x-y \rangle^{1+\epsilon} f(x-y) \dd y < \infty
$$
and
$$
\int_{|x-y|\leq|y|} \langle y \rangle^{-4} f(x-y) \dd y \les \langle x \rangle^{-4} \int_{\R^3} f(x-y) \dd y \les \langle x \rangle^{-4}.
$$

This implies that $X_{1+\epsilon}$ and $Y_{1+\epsilon}$ are also algebras, under composition of operators and $\oast$ respectively.
\end{proof}

We use the following fact in defining the Fourier transform on $Y$:
\begin{lemma}\lb{lema2.4} For any $h \in L^{\infty}_y$, $\int T(x_0, x_1, y) h(y) \dd y$ defines a bounded operator in $\B(L^{\infty}_{x_0}, L^{\infty}_{x_1})$ and in $\B(L^1_{x_1}, L^1_{x_0})$ by
\be\lb{2.33}
\Big|\int f(x_0) T(x_0, x_1, y) h(y) g(x_1) \dd x_0 \dd y \dd x_1\Big| \les \|f\|_{\infty} \|T\|_Y \|h\|_{\infty} \|g\|_1.
\ee
\end{lemma}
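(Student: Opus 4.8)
The plan is to deduce both operator bounds from the single trilinear estimate (\ref{2.33}): once that inequality is in hand, $L^1$--$L^{\infty}$ duality delivers both mapping properties at once. Writing $K_h(x_0, x_1) := \int T(x_0, x_1, y) h(y) \dd y$ for the resulting two-variable kernel and $A_h$ for the associated operator, the left-hand side of (\ref{2.33}) is exactly the pairing $\la A_h f, g \ra$. Holding $f \in L^{\infty}_{x_0}$ fixed, (\ref{2.33}) reads $|\la A_h f, g\ra| \les \|f\|_{\infty} \|T\|_Y \|h\|_{\infty} \|g\|_1$ for every $g \in L^1_{x_1}$; since $(L^1)^* = L^{\infty}$ this identifies $A_h f \in L^{\infty}_{x_1}$ with the corresponding norm control, i.e.\ $A_h \in \B(L^{\infty}_{x_0}, L^{\infty}_{x_1})$. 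Reading the same inequality with $g \in L^1_{x_1}$ fixed and testing against all $f \in L^{\infty}_{x_0}$ shows that $g \mapsto \int K_h(\cdot, x_1) g(x_1) \dd x_1$ sends $L^1_{x_1}$ into $L^1_{x_0}$ boundedly, which is the statement $A_h \in \B(L^1_{x_1}, L^1_{x_0})$.

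It then remains to establish (\ref{2.33}). First I would integrate out $x_0$, setting
$$
\frak X(x_1, y) := \int_{\R^3} f(x_0) T(x_0, x_1, y) \dd x_0.
$$
This is precisely the contraction $e_{f, 0, I} T$ corresponding to the choice $s = I$ (for which $x_0 - s x_0 = 0$), so by the very definition of the $Y$-norm it lies in $X$ with $\|\frak X\|_X \leq \|f\|_{\infty} \|T\|_Y$. The trilinear form in (\ref{2.33}) then collapses to $\int \frak X(x_1, y) h(y) \ov{g}(x_1) \dd y \dd x_1$, and the task is to bound it by $\|h\|_{\infty} \|g\|_1 \|\frak X\|_X$.

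The crux is the uniform-in-$x_1$ bound
$$
\sup_{x_1 \in \R^3} \int_{\R^3} |\frak X(x_1, y)| \dd y \leq \|\frak X\|_X.
$$
To obtain it I would insert the representation $\frak X(x_1, y) = \int_{\ISO(3)} g_{s, y + x_1 - s x_1}(x_1) \dd s$ from the definition of $X$, regard $(s, z) \mapsto g_{s, z}(x_1)$ as a scalar measure whose total variation is dominated by the finite positive measure $\dd \|g_{s, z}\|_{L^{\infty}_x}$, and change variables $z = y + x_1 - s x_1$, a Lebesgue-measure-preserving translation in $y$ for each fixed $s$. The $y$-integral then equals the full mass $\int_{\R^3} \int_{\ISO(3)} \dd \|g_{s, z}\|_{L^{\infty}_x}$, which is $\|\frak X\|_X$ after passing to the infimum over representations and is independent of $x_1$. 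Granting this, bounding $|h| \leq \|h\|_{\infty}$ pointwise and pairing the uniform bound against $g \in L^1$ gives $|\int \frak X h \ov{g}| \leq \|h\|_{\infty} \|\frak X\|_X \|g\|_1$; combined with $\|\frak X\|_X \leq \|f\|_{\infty} \|T\|_Y$ this is exactly (\ref{2.33}).

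I expect the only delicate point to be the measure-theoretic bookkeeping in the last display: since $g_{s, y}(x)$ is an $L^{\infty}_x$-valued measure that is singular in $(s, y)$, both the interchange of the $\ISO(3)$- and $y$-integrations and the domination $|g_{s, z}(x_1)| \leq \|g_{s, z}\|_{L^{\infty}_x}$ must be read at the level of total variations of vector measures rather than pointwise in $(s, y)$. Everything else is the elementary duality reduction together with a translation change of variables.
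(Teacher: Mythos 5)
Your proof is correct and follows essentially the same route as the paper: integrate out $x_0$ to recognize $\frak X = e_{f,0,I}T \in X$ with $\|\frak X\|_X \les \|f\|_{\infty}\|T\|_Y$, then control the pairing with $h$ and $g$ via the uniform-in-$x_1$ integrability of the $X$-kernel. The only difference is that where the paper simply cites the fact that operators in $X$ are bounded on $L^{\infty}$ (established earlier by Minkowski's inequality), you prove the underlying kernel bound $\sup_{x_1}\int|\frak X(x_1,y)|\dd y \leq \|\frak X\|_X$ directly from the definition of $X$ via total-variation domination and the translation change of variables --- a more self-contained rendering of the same step.
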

\begin{proof}[Proof of Lemma \ref{lema2.4}] The conclusion will be true because
$$
\esssup_{x_1} \bigg| \int_{\R^3} \Big|\int_{\R^3} f(x_0) T(x_0, x_1, y) h(y) \dd x_0\Big| \dd y\bigg| \les \|f\|_{\infty} \|T\|_Y \|h\|_{\infty}.
$$
Performing the integral in $x_0$, for
$$
\frak X = \int_{\R^3} f(x_0) T(x_0, x_1, y) \dd x_0 \in X,\ \|\frak X\|_X \les \|f\|_{\infty} \|T\|_Y,
$$
we arrive at
$$
\esssup_{x_1} \Big|\int_{\R^3} \frak X(x_1, y) h(y) \dd y\Big| \les \|\frak X\|_X \|h\|_{\infty}.
$$
This is true because operators in $X$ are bounded on $L^{\infty}$.
\end{proof}

Next, in preparation for Wiener's theorem, we characterize the Fourier transform of elements $T \in Y$.
\begin{lemma}[Fourier transform]\lb{lemma2.14} For $T \in Y$, let
\be
\mc F_y T(x_0, x_1, \eta) = \int T(x_0, x_1, y) e^{-iy\eta} \dd \eta.
\ee
Then, for each $\eta$, $\mc F_y T(x_0, x_1, \eta)$ is a bounded $L^1$ operator:
\be\lb{2.89}
\esssup_{\eta} \|\mc F_y T(x_0, x_1, \eta)\|_ {\B(L^1_{x_1}, L^1_{x_0})} \les \|T\|_Y.
\ee
Conversely, consider a kernel $T$ having the product form
$$
T(x_0, x_1, y) := T_0(x_0, x_1) \chi(y),
$$
where $\chi \in L^1$ and $T_0 \in \B(L^\infty_{x_0}, L^\infty_{x_1})$. Then $T \in Y$ and
\be\lb{converse}
\|T\|_Y \les \|T_0\|_{\B(L^\infty_{x_0}, L^\infty_{x_1})} \|\chi\|_{L^1}.
\ee
\end{lemma}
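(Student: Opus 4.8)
The plan for the direct bound (\ref{2.89}) is to feed the character $h(y) = e^{-iy\eta}$ into Lemma \ref{lema2.4}. Since $\|e^{-iy\eta}\|_{L^\infty_y} = 1$ for every $\eta \in \R^3$, inequality (\ref{2.33}) gives, for all $f \in L^\infty_{x_0}$ and $g \in L^1_{x_1}$ and uniformly in $\eta$,
$$
\Big| \int f(x_0)\, \mc F_y T(x_0, x_1, \eta)\, \ov{g}(x_1) \dd x_0 \dd x_1 \Big| \les \|f\|_{L^\infty} \|T\|_Y \|g\|_{L^1},
$$
because the inner $y$-integral of $T(x_0,x_1,y) e^{-iy\eta}$ is exactly $\mc F_y T(x_0,x_1,\eta)$. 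Reading this as a bound on the pairing of $L^\infty_{x_0}$ against $L^1_{x_1}$ and taking the supremum over $\|f\|_{L^\infty} = 1$ yields $\|\mc F_y T(\cdot,\cdot,\eta)\|_{\B(L^1_{x_1}, L^1_{x_0})} \les \|T\|_Y$, uniformly in $\eta$, which is (\ref{2.89}). The only point needing care is Fubini in the $x_0$- and $y$-variables; this is legitimate because $T \in Y \subset Z$ is an $L^\infty$-valued Borel measure of locally finite mass in $y$, so $\mc F_y T(\cdot,\cdot,\eta)$ is well defined for each $\eta$ and the displayed estimate controls it.

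For the converse I must bound, for every $s \in \ISO(3)$ and every $f$ with $\|f\|_{L^\infty} \le 1$, the $X$-norm of the contraction $C_{f,s}(x,y) = \int f(x_0) T_0(x_0,x)\chi(y + x_0 - sx_0) \dd x_0$. The plan is to peel off the $y$-dependence, which sits entirely in $\chi$, and to recognize $C_{f,s}$ as the composition of convolution by $\chi$ — an element of $X$ of norm at most $\|\chi\|_{L^1}$, as recorded after (\ref{ident}) — with the spatial operation carried by $T_0$ and the isometry $s$. Carrying out the $y$-integral against $h(x-y)$ and writing $H := \chi * h$, the contraction becomes $(C_{f,s}h)(x) = \int f(x_0) T_0(x_0,x)\, H\big(sx + (s-I)(x_0-x)\big) \dd x_0$, so on the diagonal $x_0 = x$ the argument of $H$ collapses to $sx$ and the isometry $s$ emerges. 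Since $X$ is a Banach algebra under composition, it suffices to produce a representation of $C_{f,s}$ as an integrable combination of elementary transformations with isometry component $s$ and to show its total mass is $\les \|f\|_{L^\infty}\|T_0\|_{\B(L^\infty)}\|\chi\|_{L^1}$; in the model case $s = I$ this is immediate, since then $(C_{f,I}h)(x) = (T_0 f)(x)\,(\chi*h)(x)$ is multiplication by the bounded function $T_0 f$ followed by $\chi$-convolution.

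The key computation for general $s$ is the change of variables $x_0 = x + z$ together with the expansion $H = \chi*h$, which rewrites the contraction as $(C_{f,s}h)(x) = \int_v g_{s,v}(x)\, h(sx - v)\dd v$ with $g_{s,v}(x) = \int_z f(x+z)\, T_0(x+z,x)\,\chi\big(v + (s-I)z\big)\dd z$; reading off the isometry $s$, the $X$-norm is at most $\int_v \|g_{s,v}\|_{L^\infty_x}\dd v$. Bounding the weight and integrating in $v$, the translation invariance $\int_v |\chi(v+(s-I)z)|\dd v = \|\chi\|_{L^1}$ leaves the internal integral $\int_z |T_0(x+z,x)|\dd z = \int_{x_0}|T_0(x_0,x)|\dd x_0 \le \|T_0\|_{\B(L^\infty)}$. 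I expect the main obstacle to be precisely the order of the $\sup_x$ and the $v$-integral: the $X$-norm demands $\int_v \sup_x$, whereas the operator bound for $T_0$ supplies only $\sup_x \int_{x_0}$. The clean resolution occurs when the kernel is translation-like in $x_0-x_1$ — which is exactly the situation for the free-resolvent kernels $R_0(\lambda^2)(x_0,x_1)$ entering the $T_0$ of interest — for then $T_0(x+z,x)$ is independent of $x$ after the change of variables, the supremum in $x$ sees only $f$, and Tonelli returns the product $\|T_0\|_{\B(L^\infty)}\|\chi\|_{L^1}$; for a $T_0$ that is not translation-invariant one must instead split the representation between the isometries $s$ and $I$ according to where the mass of $T_0$ concentrates, matching the same bound.
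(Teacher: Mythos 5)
Your treatment of the direct bound (\ref{2.89}) is correct and is precisely the paper's own proof: the paper simply observes that Lemma \ref{lema2.4} with $h(y)=e^{-iy\eta}$ gives the uniform $\B(L^1_{x_1},L^1_{x_0})$ bound, which is exactly your duality argument.

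The gap is in the converse, and it sits exactly at the step you flag and then wave away. What you actually prove is (\ref{converse}) in two cases: $s=I$, where the contraction factors as $(T_0f)(x)\,(\chi*h)(x)$, and general $s$ with a convolution kernel $T_0(x_0,x_1)=\tau(x_0-x_1)$, where the supremum in $x$ falls only on $f$ and Tonelli closes the estimate. The final clause --- ``split the representation between the isometries $s$ and $I$ according to where the mass of $T_0$ concentrates'' --- is not an argument, and no argument of that kind can work at the stated level of generality. Indeed, take $s=-I$, $f\equiv 1$, and $T_0(x_0,x_1)=\phi\big(x_0-\psi(x_1)\big)$ with $\phi$ a unit-mass bump and $\psi$ measurable, so that $\|T_0\|_{\B(L^\infty_{x_0},L^\infty_{x_1})}=1$; the contraction is then a mollified version of $h\mapsto(\chi*h)\big(x+2\psi(x)\big)$. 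Choosing $\psi$ locally constant on $N$ disjoint unit cubes, with values separated by distances much larger than the diameter of the region under consideration, forces any representation $\int_{\ISO(3)}\int h(\sigma x-v)\dd g_{\sigma,v}(x)$ to carry total variation at least $1$ in each of $N$ pairwise disjoint regions of $(\sigma,v)$-space, so this single contraction has $X$-norm at least $N$ while $\|T_0\|_{\B(L^\infty)}\|\chi\|_{L^1}$ stays fixed. Thus the converse cannot follow from the operator norm of $T_0$ alone once $s\ne I$; some additional structure of $T_0$ must be used. This is not an academic point, because in the proof of Theorem \ref{theorem_1.1} the converse is applied to $T_0=\tilde T_{1+}(\eta)=e^{ix_1\eta}R_V(|\eta|^2+i0)Ve^{-ix_0\eta}$ and to $\mc F_y T_{1+}(\eta)$, which are $V$-weighted, non-convolution kernels (their $\B(L^\infty,L^\infty)$ bound comes from Lemma \ref{lemma2.3}); so the cases your computation does cover miss the intended applications.

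For comparison, the paper's own proof runs differently from yours: it first reduces to $\chi\in L^\infty$ with finite-measure support by the layer-cake decomposition $\chi=\sum_k \chi\,\chi_{\{2^k\le|\chi|<2^{k+1}\}}$, and then, for each \emph{fixed} $y$, reads the whole $x_0$-integral as $T_0$ applied to the bounded function $x_0\mapsto f(x_0)\chi(y+x_0-sx_0)$, obtaining $\|C_{f,s}(\cdot,y)\|_{L^\infty}\le\|f\|_\infty\|T_0\|_{\B(L^\infty)}\|\chi\|_\infty$ with no interchange of $\sup_x$ and $\int_{x_0}$ at all; it then integrates in $y$ and sums the dyadic layers. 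Note, however, that the integration in $y$ tacitly uses that the contraction vanishes outside a set of $y$ of measure comparable to $|\supp\chi|$; this is clear for $s=I$ but fails as stated for $s\ne I$, since the contraction is supported in $\supp\chi+\Ran(I-s)$, a cylinder of infinite measure. In other words, the obstruction you identified is genuinely the crux of this lemma, viewed from either side; your proposal recognizes it but does not resolve it, and resolving it requires exploiting more about $T_0$ than membership in $\B(L^\infty_{x_0},L^\infty_{x_1})$.
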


\begin{proof}
Lemma \ref{lema2.4} directly implies that the Fourier transform $\mc F_y T(x_0, x_1, \eta)$ of an operator $T \in Y$ is $L^1$-bounded from $L^1_{x_1}$ to $L^1_{x_0}$.

Conversely, consider a kernel $T$ having the product form
\be
T = T_0(x_0, x_1) \chi(y).
\ee
Assume that $\chi \in L^\infty$ has a support of finite Lebesgue measure. By the definition (\ref{2.31}), $T$ is in $Y$ when for all $f \in L^{\infty}$ and $s \in \ISO(3)$ the following contracted operators are in $X$:
\be\lb{exp}
\int f(x_0) T(x_0, x_1) \chi(y + x_0 - s x_0) \dd x_0 \in X.
\ee
Then for each fixed $y$
$$\begin{aligned}
\|g(x_1, y)\|_{L^\infty_{x_1}} &:= \Big\|\int f(x_0) T(x_0, x_1) \chi(y+x_0-s x_0) \dd x_0\Big\|_{L^\infty_{x_1}} \\
&\les \|f\|_{L^\infty} \|T\|_{\B(L^\infty_{x_0}, L^\infty_{x_1})} \|\chi\|_{L^\infty}.
\end{aligned}$$
Integrating over the support of $\chi$ we obtain
$$
\|g(x_1, y)\|_X \les \int \|g(x_1, y)\|_{L^\infty_{x_1}} \dd y \les \|f\|_{L^\infty} \|T\|_{\B(L^\infty_{x_0}, L^\infty_{x_1})} \|\chi\|_{L^\infty} |\supp(\chi)|.
$$
Next, consider any $L^1$ function $\chi$ and decompose it as $\sum_k \chi_{2^k \leq |f(x)| < 2^{k+1}}(x) \chi(x)$. The conclusion follows.
\end{proof}

The following simple property of the space $Y$ is also useful in the sequel:
\begin{lemma}
$Y$ is translation-invariant in the $y$ variable: for any $y_0 \in \set R^3$
\be
\|T(x_0, x_1, y)\|_Y = \|T(x_0, x_1, y + y_0)\|_Y
\ee
and for any $\chi \in L^1$
$$
\|\chi *_y T\|_Y = \Big\|\int \chi(y_1) T(x_0, x_1, y-y_1) \dd y_1\Big\|_Y \leq \|\chi\|_1 \cdot \|T\|_Y.
$$
\end{lemma}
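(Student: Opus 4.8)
The plan is to reduce both assertions to the corresponding, already-recorded properties of the $X$-norm, exploiting the fact that the contraction operation defining $\|\cdot\|_Y$ commutes with every operation carried out in the convolution variable $y$. Throughout I write $\Phi_{f,s}[T](x,y) := \int_{\R^3} f(x_0) T(x_0, x, y + x_0 - s x_0)\dd x_0$ for the contraction of $T \in Y$ by a pair $(f,s)$ with $\|f\|_{\infty}=1$, $s \in \ISO(3)$, so that by definition $\|T\|_Y = \sup_{\|f\|_\infty = 1,\, s} \|\Phi_{f,s}[T]\|_X$. The key structural observation is that $\Phi_{f,s}$ touches only the $x_0$ variable, while the $y$-translation and $y$-convolution act on a variable disjoint from $x_0$; hence the two commute.

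For the translation statement, set $T^{y_0}(x_0,x_1,y) := T(x_0,x_1,y+y_0)$. Substituting directly into the contraction, $\Phi_{f,s}[T^{y_0}](x,y) = \int f(x_0) T(x_0,x,(y+y_0)+x_0-sx_0)\dd x_0 = \Phi_{f,s}[T](x, y+y_0)$, i.e.\ the contraction of the shifted kernel is the $y$-shift of the contraction. Since the $X$-norm is translation-invariant in $y$ (already recorded right after the norms on $X$ and $Y$ are defined), each $\|\Phi_{f,s}[T^{y_0}]\|_X = \|\Phi_{f,s}[T]\|_X$, and taking the supremum over $f$ and $s$ gives $\|T^{y_0}\|_Y = \|T\|_Y$.

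For the convolution estimate, put $S := \chi *_y T$. Writing out $\Phi_{f,s}[S]$, interchanging the $x_0$- and $y_1$-integrations (legitimate because $f$ is bounded, $\chi \in L^1$, and $\Phi_{f,s}[T](x,\cdot)$ is $X$-valued integrable), and noting that the shift $x_0 - sx_0$ is independent of the convolution variable $y_1$, one finds $\Phi_{f,s}[S] = \chi *_y \Phi_{f,s}[T]$. It therefore suffices to have the convolution bound at the level of $X$, namely $\|\chi *_y \frak X\|_X \le \|\chi\|_{L^1}\|\frak X\|_X$. This in turn follows from the algebra structure of $X$: a direct computation shows $\chi *_y \frak X = \frak X \circ C_\chi$, where $C_\chi f = \chi * f$ is the convolution operator, which is precisely the element $\frak X(x,y)=\chi(y)\in X$ (with $g_{s,y}=\delta_I(s)\chi(y)$, hence $\|C_\chi\|_X \le \|\chi\|_{L^1}$); since $X$ is a Banach algebra under composition, $\|\frak X\circ C_\chi\|_X \le \|\frak X\|_X\|C_\chi\|_X$. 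Combining, $\|\Phi_{f,s}[S]\|_X \le \|\chi\|_{L^1}\|\Phi_{f,s}[T]\|_X$, and the supremum over $(f,s)$ yields $\|\chi *_y T\|_Y \le \|\chi\|_{L^1}\|T\|_Y$.

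The main point requiring care --- rather than a genuine obstacle --- is the bookkeeping of the affine shift $x_0 - sx_0$ inside the contraction: one must confirm that it passes unchanged through both the $y$-translation and the $y_1$-convolution, which holds because that shift depends only on $x_0$ and $s$, not on $y$ or $y_1$. The only analytic input is the routine Fubini justification together with the two already-established $X$-level facts (translation invariance in $y$ and the Banach-algebra/convolution bound), so no new estimate is needed.
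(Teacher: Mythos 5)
Your proof is correct and follows exactly the route the paper intends: the paper's own proof is the single sentence that the lemma ``follows directly from the definition of $Y$,'' and your argument is precisely the unpacking of that claim --- the contraction $\Phi_{f,s}$ acts only on $x_0$ and hence commutes with translation and convolution in $y$, reducing both statements to the already-recorded $X$-level facts (translation invariance of $\|\cdot\|_X$ and the bound for composition with the convolution element $\chi(y) \in X$). No gap; your write-up simply makes explicit the bookkeeping the paper leaves implicit.
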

\begin{proof}
This property follows directly from the definition of $Y$ (\ref{2.31}).
\end{proof}

\subsection{The structure of the wave operators} In this section we set $\epsilon=0$ and show that the kernels $T_{n+}$, used to represent the operators $W_{n+}$ in the space $Z$ by formula (\ref{2.19}), have a specific structure captured by the definitions of $X$ and $Y$. In particular, then, $W_{n+} \in X$ for each $n$.

The structure of $W_{n+}$ is described in the next lemma.
\begin{lemma}\lb{lema2.13} If $V \in B$, then $T_{n+} \in Y$ and
\be\lb{2.83}
\|T_{n+}\|_Y \les C_2^n \|V\|_B^n.
\ee
If, in addition, $V \in \langle x \rangle^{-\alpha} L^2$, $1/2<\alpha<5/2$, and $0 \leq \beta < \alpha-1/2$, $\beta \ne 1$, then
\be\lb{eq2.84}
\|T_{n+}\|_{Y_{\beta}} \les C_2^n \|V\|_{\langle x \rangle^{-\alpha} L^2}^n.
\ee
\end{lemma}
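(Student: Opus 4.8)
The plan is to prove the statement for $n=1$ and then bootstrap to all $n$ via the Banach-algebra structure. By Lemma \ref{yalg}, $Y$ and $Y_\beta$ are Banach algebras under $\oast$, and the frequency-side formula (\ref{2.17}) gives the semigroup law $T_{m+}\oast T_{n+}=T_{(m+n)+}$, so that $T_{n+}=T_{1+}^{\oast n}$. Submultiplicativity of the $Y$- and $Y_\beta$-norms then yields $\|T_{n+}\|_Y\les\|T_{1+}\|_Y^n$ and $\|T_{n+}\|_{Y_\beta}\les\|T_{1+}\|_{Y_\beta}^n$. Hence (\ref{2.83}) and (\ref{eq2.84}) reduce to the two base estimates $\|T_{1+}\|_Y\les C\|V\|_B$ and $\|T_{1+}\|_{Y_\beta}\les C\|V\|_{\langle x\rangle^{-\alpha}L^2}$, the constant $C$ becoming $C_2$ upon taking $n$-th powers.

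To treat the base case I would start from the closed form (\ref{2.90}) for $\mc F_y T_{1+}$ and invert the Fourier transform in $\eta$. Writing $z=x_0-x_1$, the $\eta$-integral of $e^{i|\eta||z|-i\eta\cdot z}$ is an explicit distribution in $y$ whose singular support is the sphere $\{\,|y-z|=|z|\,\}$, a sphere through the origin, carrying the prefactor $V(x_0)/(4\pi|x_0-x_1|)$. Equivalently, the defining relation $|y-z|=|z|$ is linear in $x_0$: for fixed $y$ and $x_1$ it confines $x_0$ to the affine hyperplane $\{x_0:\ \hat y\cdot x_0=\tfrac{|y|}{2}+\hat y\cdot x_1\}$. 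This hyperplane is precisely a reflection plane, so that after performing the contraction demanded by the definition (\ref{2.31}) of $Y$ — integrating $g(x_0)\,T_{1+}(x_0,x,y+x_0-sx_0)$ in $x_0$ — the resulting kernel decomposes into elementary transformations $f(x)\mapsto h(x)f(s'x-y)$, with $s'$ ranging over the reflections $S_\omega$ and their compositions with $s$. This is how the $\ISO(3)$-measure in the definition (\ref{2.9}) of $X$ is produced, placing $T_{1+}$ in $Y$ once its mass is shown to be finite.

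The crux of the argument, and the step I expect to be the main obstacle, is bounding the total mass $\int_{\R^3}\int_{\ISO(3)}\dd\|g_{s,y}\|_{L^\infty_x}$ of this measure by $\|V\|_B$. Here the dyadic definition (\ref{eq1.6}) of $B$ is indispensable: decomposing $V=\sum_k\chi_{|x|\in[2^k,2^{k+1}]}V$ and estimating each shell by Cauchy--Schwarz, the factor $1/|x_0-x_1|$ together with the transverse singularity carried by the inverted kernel combine to produce exactly the weight $2^{k/2}\|\chi_{|x|\in[2^k,2^{k+1}]}V\|_{L^2}$; summing over $k\in\Z$ reconstitutes $\|V\|_B$. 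The delicate features are that the inverted kernel carries a $\delta'$-type (double-pole) singularity transverse to the sphere, so that regularity must be recovered from the $x_0$-integration, and that the matching of powers of $2^k$ is forced by the scaling-criticality of $B$: any other weight would either diverge at small scales or fail to sum at large scales. Controlling the $L^\infty_x$-valued nature of the measure uniformly in $x$, rather than merely its scalar mass, is the extra bookkeeping this step requires.

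For the weighted estimate (\ref{eq2.84}) I would rerun the same computation while tracking decay in $y$. Since the scattering sphere has radius $|x_0-x_1|\ge|y|/2$, the decay $\langle x_0\rangle^{-\alpha}$ of $V$ transfers, after integration against the transverse singularity, into $\langle y\rangle^\beta$-weighted integrability of the $\ISO(3)$-measure; the admissible exponent is governed by the balance between the decay rate $\alpha$ and the half-power loss intrinsic to the shellwise $L^2$-to-mass passage, which yields the threshold $0\le\beta<\alpha-1/2$. The exclusion $\beta\ne1$ is structural rather than technical: as noted after (\ref{cond_asimp}), $T_{1+}$ possesses a leading profile, constant in $x$ and of size $\langle y\rangle^{-4}$, whose mass sits exactly at the borderline where the weight $\langle y\rangle^{\beta}$ with $\beta=1$ fails to be integrable in $\R^3$; it therefore cannot lie in $Y_1$ and must instead be isolated in the refined space $Y_{1+\epsilon}$. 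For every other $\beta$ in the stated range, the weighted triangle inequality that makes $Y_\beta$ an algebra (Lemma \ref{yalg}) closes the $n$-th power estimate exactly as in the unweighted case.
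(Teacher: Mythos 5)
Your proposal is correct and follows essentially the same route as the paper: reduction to $n=1$ via the Banach-algebra structure of $Y$ and $Y_\beta$ (Lemma \ref{yalg}), Fourier inversion of the resolvent formula revealing the reflection/half-space structure of $T_{1+}$ and hence its membership in $Y$ after contraction, an $L^1$ mass bound obtained by playing the Plancherel estimate against the $t$-weighted estimate, and the threshold $\beta<\alpha-1/2$ (with $\beta\ne 1$ excluded precisely because the $\langle y\rangle^{-4}$ leading profile fails $\langle y\rangle$-weighted integrability in $\R^3$) in the weighted case. The only real difference is presentational: you run the interpolation by hand --- dyadic shells of $V$ plus Cauchy--Schwarz per shell --- where the paper (Lemma \ref{lema2.9}) decomposes $L_1$ dyadically in $t$ and invokes the real-interpolation identity $B=(L^2,|x|^{-1}L^2)_{\frac 1 2,1}$, which amounts to the same estimate.
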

Similar results hold for $W_{n+}^\epsilon$ when $\epsilon>0$, but the proof is more involved.
\begin{proof}
Firstly, we recall from Lemma \ref{lemma2.1} a specific formula for $W_{n+}^\epsilon$:
\be\begin{aligned}
\langle W_{n+}^\epsilon f, g \rangle &= (-1)^{n-1} \int_{\set R^6} T_{n+}^\epsilon(x_0, x, y) f(x-y) \ov g(x) \dd y \dd x \dd x_0 \\
&= (-1)^{n-1} \int_{\set R^6} (\mc F_{x_0, x, y} T_{n+}^\epsilon)(0, \xi, \eta) \widehat f(\xi) \ov {\widehat g}(\xi+\eta) \dd \eta \dd \xi,
\end{aligned}\ee
where the necessary and sufficient condition for $L^1$ boundedness is
\be\lb{2.36}
\sup_x \int \Big|\int T_{n+}^\epsilon(x_0, x, y) \dd x_0\Big| \dd y < \infty
\ee
and the kernels have the form (\ref{2.16}--\ref{2.17}).

By (\ref{eqn2.6}), the kernel associated to $W_{1+}$ is $T_{1+}$ given by
$$
(\mc F_{x_0, x, y} T_{1+})(\xi_0, \xi, \eta) = \lim_{\epsilon \downarrow 0} (\mc F_{x_0, x, y} T_{W_{1+}^\epsilon})(\xi_0, \xi, \eta) = \lim_{\epsilon \downarrow 0} \frac {\widehat V(\xi-\xi_0)}{|\xi+ \eta|^2 - |\eta|^2 - i \epsilon}.
$$
For $n \geq 1$, more generally (see (\ref{eqn2.7})), $T_{n+}^\epsilon$ represents $W_{n+}^\epsilon$:
\be\begin{aligned}\lb{2.56}
(\mc F_{x_0, x_n, y} T_{n+})(\xi_0, \xi_n, \eta) &= \lim_{\epsilon \downarrow 0} (\mc F_{x_0, x_n, y} T_{n+}^\epsilon)(\xi_0, \xi_n, \eta) \\
&= \lim_{\epsilon \downarrow 0} \int \frac{\prod_{\ell=1}^n \widehat V(\xi_{\ell} - \xi_{\ell-1}) \dd \xi_1 \ldots \dd \xi_{n-1}}{\prod_{\ell=1}^n (|\xi_{\ell}+\eta|^2-|\eta|^2 - i \epsilon)}.
\end{aligned}\ee

The subsequent computations can be carried on in full generality for $n \geq 1$, but become quite involved when $n > 1$, see \cite{yajima0}. Thus, we spare a considerable effort by directly proving (\ref{2.83}) and (\ref{eq2.84}) only when $n=1$ and then using the algebra structure of $Y$ and $Y_{\beta}$ to infer that the same results hold for $n>1$.

Reversing the Fourier transform in (\ref{2.56}), note that the expression will contain a convolution in the $y$ variable (resulting from a product in $\eta$). Effectively, for $\epsilon>0$ and any Schwartz function $f$
\be\lb{eqn2.57}
\Big(\mc F_{\eta} \int_{\R^3} \frac {f(\xi)}{|\xi+\eta|^2-|\eta|^2-i\epsilon}\dd\xi \Big) (t \omega) = \int_0^{\infty} f(s \omega) e^{-its/2-\epsilon t/(2s)} s \dd s.
\ee
At this point we assume that $V$ is of Schwartz class; then, we can let $\epsilon$ become zero in (\ref{eqn2.57}), thus in (\ref{2.56}). We find that $W_{1+}$ has the form
\be\begin{aligned}\lb{eqn2.90}
W_{1+} f(x) = \int_{S^2} \int_{[0, \infty)} &K_1(x, t \omega) f(x+t \omega) \dd t \dd \omega,
\end{aligned}\ee
where $K_1$ can be written in polar coordinates as
\be\begin{aligned}\lb{2.50}
K_1(x, t\omega) = \frac 12\int_{[0, \infty)} \widehat {V}(s \omega) e^{-it s/2} e^{i s \omega \cdot x} s \dd s.
\end{aligned}\ee
We transform the $x$ dependence in (\ref{2.50}) by letting
\be\begin{aligned}\lb{2.60}
L_1(t\omega) := \int_{[0, \infty)} \widehat {V}(s \omega) e^{-it s/2} s \dd s,
\end{aligned}\ee
respectively
\be\begin{aligned}\lb{2.61}
\tilde L_1(t\omega) := \int_{(-\infty, 0]} \widehat {V}(s \omega) e^{-it s/2} s \dd s.
\end{aligned}\ee
$L_1$ and $\tilde L_1$ involve the same integrand, but integrated over different regions.

With this notation, we distinguish between two situations in (\ref{2.50}), namely $x \cdot \omega \leq t/2$ and $x \cdot \omega \geq t/2$. In the first situation, $(t-2x\cdot \omega) \omega$ has the same orientation as $\omega$. In the second situation, $(t-2x\cdot \omega) \omega$ and $\omega$ have opposite orientations.

Combining the two cases, one has that
$$\begin{aligned}
K_1(x, t \omega) &= \frac 12\chi_{(-\infty, \frac {t} 2)}(x \cdot \omega)\ L_1((t-2x\cdot \omega) \omega) +\\
&+ \frac 12\chi_{(\frac {t} 2, \infty)}(x \cdot \omega)\ \tilde L_1((t - 2x \cdot \omega)\omega).
\end{aligned}$$

Under suitable conditions on $V$, the next lemma, Lemma \ref{lema2.9}, shows that both $L_1$ and $\tilde L_1$ are integrable. For $\omega \in S^2$, let $S_\omega(x) = x - 2(x \cdot \omega) \omega \in \ISO(3)$. Then, for each $x$, let $g_{\omega, y}(x)$ be the measure on $\ISO(3) \times \set R^3$ supported on the codimension-three subset
$$
\supp g_{s, y}(x) = \{(S_\omega, t\omega) \mid \omega \in S^2,\ t \leq 0\}
$$
and given on this set by
$$\begin{aligned}
dg_{S_\omega, -t\omega}(x) &= \frac 12\big(\chi_{(-\infty, t/2)}(x \cdot \omega)\ L_1(t\omega) + \chi_{(t/2, \infty)}(x \cdot \omega)\ \tilde L_1(t\omega)\big) \dd t \dd \omega.
\end{aligned}$$
Othewise put,
\be\lb{2.100}
\dd g_{S_\omega, -y}(x) = \frac 12\delta_{\frac y {|y|}}(\omega) \big(\chi_{(-\infty, |y|/2)}(x \cdot \omega)\ L_1(y) + \chi_{(|y|/2, \infty)}(x \cdot \omega)\ \tilde L_1(y)\big) |y|^{-2} \dd y.
\ee
Finally, we can rewrite (\ref{eqn2.90}) as
\be\lb{2.32}
(W_{1+} f)(x) = \int_{\ISO(3)} \int_{\set R^3} f(s x - y) \dd g_{s, y}(x),
\ee
where by Lemma \ref{lema2.9}
\be\lb{eq2.33}\begin{aligned}
\int_{\set R^3} \Big(\int_{\ISO(3)} d \|g_{s, y}(x)\|_{L^{\infty}_x}\Big) \dd y &\leq \int \big(|L_1(y)| + |\tilde L_1(y)|\big) |y|^{-2} \dd y \les \|V\|_B < \infty.
\end{aligned}\ee
In particular, the $L^1$-boundedness of $T_{1+}$, (\ref{2.36}), is implied by the strictly stronger assertions (\ref{2.32}) and (\ref{eq2.33}).

To emphasize the dependence of $W_{1+}$ and $T_{1+}$ on the potential $V \in B$,~let
\be\lb{xfrak}\begin{aligned}
\frak X_{V+}(x, y) &:= W_{1+} = \int_{\R^3} T_{1+}(x_0, x, y) \dd x_0, \\
\frak X_{V-}(x, y) &:= W_{1-} = \int_{\R^3} T_{1-}(x_0, x, y) \dd x_0.
\end{aligned}\ee
Using the notation of Lemma \ref{lemma2.1}, we rephrase (\ref{2.32}) and (\ref{2.33}) as
\be\lb{eq2.98}
\|\frak X_{V+}(x, y)\|_X = \Big\|\int_{\R^3} T_{1+}(x_0, x, y) \dd x_0\Big\|_X \les \|V\|_B.
\ee

Based on (\ref{eq2.98}), we next show that $T_{1+} \in Y$, by checking that $eT_{1+} \in X$ for every elementary transformation $e$.

For any $f \in L^{\infty}$ let $V_f = f(-x) V(x)$ and note that, since $B$ is a Banach lattice, $\|V_f\|_B \les \|f\|_{\infty} \|V\|_B$.

We show that contractions $e_{f, y, s} T_{1+}$ have a well-determined form. To begin with the simpler case of $e_{f, y, I}$ (i.e.\ $s=I \in \ISO(3)$),
$$\begin{aligned}
e_{f, y_0, I} T_{1+} &= \int_{\R^3} f(x_0) T_{1+}(x_0, x, y) \dd x_0 \\
&= \mc F^{-1}_{x, y} \int \frac {\widehat f(-\xi_0) \widehat V(\xi-\xi_0)}{|\xi+\eta|^2-|\eta|^2-i0} \dd \xi_0 = \frak X_{V_f+} \in X.
\end{aligned}$$
Regarding $e_{f, y_0, s}$ in general, for $s \in \ISO(3)$ we evaluate
$$
E(x, y) := \int f(x_0) T_{1+}(x_0, x, y + x_0 - s x_0) \dd x_0.
$$
For a matrix $A$ of determinant one, $\mc F \{f(Ax)\} = \widehat f((A^{-1})^t \xi)$. Thus
$$
A \bpm x \\ y \epm = \bpm x \\ y+x-sx \epm \implies
(A^{-1})^t \bpm \xi \\ \eta \epm = \bpm \xi - (\eta-s^{-1}\eta) \\ \eta \epm.
$$
Performing the Fourier transform
$$\begin{aligned}
E &= \int_{\R^3} f(x_0) T_{1+}(x_0, x, y + x_0 - sx_0) \dd x_0\\
&= \int_{\R^3} f(x_0) \mc F^{-1}_{x_0, x, y} \Big(\frac {\widehat{V}(\xi-\xi_0)}{|\xi+\eta|^2-|\eta|^2-i0}\Big)(x_0, x, y + x_0-sx_0) \dd x_0 \\
&= \int_{\R^3} f(x_0) \mc F^{-1}_{x, y} \Big(\frac {\widehat{V}(\xi-\xi_0+\eta-s^{-1}\eta)}{|\xi+\eta|^2-|\eta|^2-i0}\Big)(x_0, x, y) \dd x_0 \\
&= \mc F^{-1}_{x, y} \Big(\frac {\widehat{V_f}(\xi+\eta-s^{-1}\eta}{|\xi+\eta|^2-|\eta|^2-i0}\Big)(x, y).
\end{aligned}$$

Letting $\tilde \xi = \xi + \eta - s^{-1} \eta$, $\tilde \eta = s^{-1} \eta$,

$$\begin{aligned}
\mc F_{x, y} E&= \frac {\widehat {V_f}(\xi + \eta-s^{-1}\eta)}{|\xi + \eta|^2 - |\eta|^2 - i0} = \frac {\widehat {V_f}(\tilde \xi)}{|\tilde \xi + \tilde \eta|^2 - |\tilde \eta|^2 - i0} \\
&= (\mc F_{x, y} \frak X_{fV+})(\xi + \eta-s^{-1}\eta, s^{-1}\eta).
\end{aligned}$$
$\frak X_{fV+}$, corresponding to the potential $fV$, belongs to $X$ by Lemma \ref{lema2.9}.
Note that
$$
A \bpm \xi \\ \eta \epm = \bpm \xi + \eta-s^{-1}\eta \\ s^{-1}\eta \epm \implies (A^{-1})^t \bpm x \\ y \epm = \bpm x \\ s^{-1}y + x-s^{-1}x \epm.
$$
The expression becomes
$$\begin{aligned}
E &= \mc F^{-1}_{x, y} \big(\mc F_{x, y} \frak X_{V_f+}(\xi + \eta - s \eta, s \eta)\big) \\
&= \frak X_{V_f+}(x, s^{-1}y + x-s^{-1}x).
\end{aligned}$$
Let $\frak X_{V_f+}$ have the kernel
$$
\frak X_{V_f+}(x, y) = \int_{\ISO(3)} g_{\sigma, y+x-\sigma x}(x) \dd \sigma,
$$
i.e. (see(\ref{xf}))
$$
(\frak X_{V_f+} h)(x) = \int_{\ISO(3)} \int_{\R^3} h(\sigma x-y) \dd g_{\sigma, y}(x).
$$
Then $\frak X_{V_f+}(x, s^{-1}y + x-s^{-1}x)$ is going to have the kernel
$$
\frak X_{V_f+}(x, sy + x-sx) = \int_{\ISO(3)} g_{\sigma, s^{-1}(y+x-\sigma x)+x-s^{-1}x}(x) \dd \sigma,
$$
so that
$$
(\frak X_{V_f+}(x, s^{-1}y + x-s^{-1}x) h)(x) = \int_{\ISO(3)} \int_{\R^3} h(s^{-1}\sigma x-s^{-1}y) \dd g_{\sigma, y}(x).
$$
Thus the measure $g_{\sigma, y}$ is replaced by $g_{s \sigma, s y}$, which has exactly the same properties as the original --- in particular,
$$
\int_{\R^3} \int_{\ISO(3)} \dd \|g_{s \sigma, s y}\|_{L^\infty_x} = \int_{\R^3} \int_{\ISO(3)} \dd \|g_{\sigma, y}\|_{L^\infty_x}.
$$
This shows that $\|\frak X_{V_f+}(x, s^{-1}y + x-s^{-1}x)\|_X = \|\frak X_{V_f+}\|_X$. Thus, $E \in X$, with control of the norms at every step.

This implies that $T_{1+} \in Y$ and, recursively, that $T_{n+} \in Y$ for all $n \geq 1$.

Henceforth we no longer assume that $V$ is of Schwartz class. For general $V \in B$, consider a sequence of approximations $V_n \in \mc S$.

On one hand, $T_{1+}(V_n)$ form a Cauchy sequence in $Y$, as
$$
\|T_{1+}(V_n) - T_{1+}(V_m)\|_Y \les \|V_n-V_m\|_B,
$$
Since $Y$ is a complete metric space, $T_{1+}(V_n)$ then have a limit in $Y$.

On the other hand, $V_n \to V$ in $B$ implies that $V_n \to V$ in $L^{3/2, 1}$, so $T_{1+}(V_n) \to T_{1+}(V)$ in $Z$. But the limit must be the same in $Y$ and in $Z$ since $Y \subset Z$, so $T_{1+}(V) \in Y$ and
$$
\|T_{1+}(V)\|_Y = \lim_{n \to \infty} \|T_{1+}(V_n)\|_Y \les \|V\|_B.
$$

Analogously, for $V\in\langle x \rangle^{-\alpha} L^2$, $1/2<\alpha<3/2$, Lemma \ref{lema2.9} shows that $T_{1+} \in Y_{\beta}$, as per~(\ref{2.81}) and (\ref{2.81bis}). For $3/2<\alpha<5/2$, we obtain by (\ref{2.81tert}) and (\ref{2.81tertt}) that $T_{1+} \in Y_{1+\epsilon}$, where $\epsilon = \alpha-1/2$.
\end{proof}

Next, we prove that $L_1$ and $\tilde L_1$ are integrable, as required for (\ref{2.33}). A similar representation exists for $W_{n+}$ when $n > 1$, but the computations are more involved.
\begin{lemma}\lb{lema2.9} For $L_1$ and $\tilde L_1$ defined by (\ref{2.60}), respectively (\ref{2.61}),
\be\begin{aligned}
\int_{S^2} \int_{[0, \infty)} &|L_1(t \omega)| \dd t \dd \omega \les \|V\|_B
\end{aligned}\ee
Moreover, when $V \in \langle x \rangle^{-1} L^2$,
\be\begin{aligned}\lb{2.81}
\int_{S^2} \int_{[0, \infty)} &|L_1(t \omega)|^2  \langle t \rangle^2 \dd t \dd \omega \les \|V\|_{\langle x \rangle^{-1} L^2}^2,
\end{aligned}\ee
and if $V \in \langle x \rangle^{-\alpha} L^2$, $1/2 < \alpha < 3/2$,
\be\begin{aligned}\lb{2.81bis}
\int_{S^2} \int_{[0, \infty)} &|L_1(t \omega)|^2 \langle t \rangle^{2\alpha} \dd t \dd \omega \les \|V\|_{\langle x \rangle^{-\alpha} L^2}^2.
\end{aligned}\ee
Finally, if $V \in \langle x \rangle^{-3/2-\epsilon} L^2$, then there exists
\be\lb{2.81tertt}
L_1^1(t\omega) = \tilde L_1^1(t\omega) = 4 \langle t \rangle^{-2} \widehat V(0)
\ee
such that
\be\lb{2.81tert}
\begin{aligned}
\int_{S^2} \int_{[0, \infty)} &|L_1(t \omega)-L_1^1(t\omega)|^2 \langle t \rangle^{3+2\epsilon} \dd t \dd \omega \les \|V\|_{\langle x \rangle^{-3/2-\epsilon} L^2}^2
\end{aligned}
\ee
and same for $\tilde L_1$.
\end{lemma}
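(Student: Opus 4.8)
The plan is to reduce all four estimates to a single Plancherel identity and then dispose of the integer case, the unweighted $L^1$ case, and the fractional/endpoint cases in turn. For fixed $\omega \in S^2$ set $\Psi_\omega(s) := \chi_{[0,\infty)}(s)\,s\,\widehat V(s\omega)$, a function of $s \in \R$, so that $L_1(t\omega) = \int_\R \Psi_\omega(s) e^{-ist/2}\dd s = \widehat{\Psi_\omega}(t/2)$. By Plancherel in $t$ (with $t/2$ dual to $s$) the weight $\langle t\rangle^{2\alpha}$ turns into the Bessel weight on the Fourier side, giving
\be
\int_\R \langle t\rangle^{2\alpha}|L_1(t\omega)|^2\dd t \;\sim\; \|\Psi_\omega\|_{H^\alpha_s}^2,
\ee
and restricting to $t \geq 0$ only decreases the left-hand side. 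Thus every weighted $L^2$ claim follows once I bound $\int_{S^2}\|\Psi_\omega\|_{H^\alpha_s}^2\dd\omega$ by the appropriate weighted norm of $V$. The key structural fact is that $G_\omega(s) := s\widehat V(s\omega)$ vanishes at $s=0$, which governs how the cutoff $\chi_{[0,\infty)}$ interacts with the $H^\alpha$ norm.

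I would first carry out the clean integer case, namely (\ref{2.81}), which is the value $\alpha=1$ of (\ref{2.81bis}) and already suffices for the $L^1$ bound. Since $G_\omega(0)=0$, multiplication by $\chi_{[0,\infty)}$ creates no jump, so $\|\Psi_\omega\|_{H^1}\leq\|G_\omega\|_{H^1}$. Writing $G_\omega'(s)=\widehat V(s\omega)+s\,(\omega\cdot\nabla\widehat V)(s\omega)$ and integrating in $\omega$: the term $s(\omega\cdot\nabla\widehat V)(s\omega)$ is a radial derivative, so after passing to polar coordinates $\dd\xi=s^2\dd s\dd\omega$ it is controlled by $\|\nabla_\xi\widehat V\|_{L^2}^2\sim\||x|V\|_{L^2}^2$; and the remaining term $\|\widehat V(s\omega)\|_{L^2_{s,\omega}}^2 = 2\int|\widehat V(\xi)|^2|\xi|^{-2}\dd\xi = 2\|V\|_{\dot H^{-1}}^2$ is dominated by $\||x|V\|_{L^2}^2$ through the dual Hardy inequality $|\langle V,\phi\rangle|\leq\||x|V\|_{L^2}\||x|^{-1}\phi\|_{L^2}\les\||x|V\|_{L^2}\|\nabla\phi\|_{L^2}$. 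Together with the $\|G_\omega\|_{L^2}^2\sim\|V\|_{L^2}^2$ contribution this gives (\ref{2.81}).

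Next I would deduce the unweighted $L^1$ bound from $\alpha=1$ by scaling. A direct change of variables shows that $\int_{S^2}\int_0^\infty|L_1(t\omega)|\dd t\dd\omega$ is invariant under the critical rescaling $V\mapsto\lambda^2 V(\lambda\,\cdot)$, which also preserves $\|V\|_B$; hence it suffices to treat $V$ supported on the unit shell $\{|x|\sim 1\}$ and then sum the dyadic pieces $V_k=\chi_{|x|\in[2^k,2^{k+1}]}V$. On the unit shell, Cauchy--Schwarz with the weight $\langle t\rangle$ gives $\int_{S^2}\int_0^\infty|L_1|\dd t\dd\omega\les\big(\int_{S^2}\int\langle t\rangle^2|L_1|^2\big)^{1/2}\les\|\langle x\rangle V\|_{L^2}\sim\|V\|_{L^2}$ by (\ref{2.81}); rescaling the $k$-th shell to unit size converts $\|V_k\|_{L^2}$ into $2^{k/2}\|V_k\|_{L^2}$, and summing reproduces $\|V\|_B$ exactly.

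The fractional and endpoint estimates (\ref{2.81bis}) and (\ref{2.81tert}) follow the same scheme, and this is where the real work and the restriction on $\alpha$ sit. For $1/2<\alpha<3/2$ the Heaviside function is still a bounded multiplier on $\{u\in H^\alpha(\R):u(0)=0\}$, so $\|\Psi_\omega\|_{H^\alpha}\les\|G_\omega\|_{H^\alpha}$; the main term $\int_{S^2}\||D_s|^\alpha G_\omega\|_{L^2}^2\dd\omega$ is then estimated by $\||x|^\alpha V\|_{L^2}^2$ via the fractional Leibniz rule (the commutator $[\,|D_s|^\alpha,s\,]$ has order $\alpha-1$) together with a fractional Hardy inequality for the lower-order piece. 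I expect this weighted restriction-to-rays inequality --- that the fractional radial derivative of $s\widehat V(s\omega)$, integrated over directions, is dominated by the full weighted norm $\||x|^\alpha V\|_{L^2}^2$ --- to be the principal obstacle, both because of the fractional calculus and because the admissible range is dictated by the Heaviside multiplier bound, which breaks down precisely at $\alpha=3/2$. That breakdown is caused by the one datum that does not vanish, $G_\omega'(0)=\widehat V(0)$: the resulting corner of $\Psi_\omega$ at $s=0$ produces exactly the slowly decaying tail $L_1\sim 4\widehat V(0)(t-i0)^{-2}$, matching the claimed $L_1^1(t\omega)=4\langle t\rangle^{-2}\widehat V(0)$ of (\ref{2.81tertt}). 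Subtracting $L_1^1$ removes the corner, so the remainder corresponds to $s(\widehat V(s\omega)-\widehat V(0))$, which vanishes to second order at $s=0$; the Heaviside multiplier is then admissible up to $\alpha<5/2$, i.e.\ for $\alpha=3/2+\epsilon$ with $\epsilon<1$, and the same Leibniz--Hardy analysis, now using $V\in\langle x\rangle^{-3/2-\epsilon}L^2$, closes (\ref{2.81tert}).
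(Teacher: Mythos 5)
Your core reduction---Plancherel in $t$, turning the weight $\langle t\rangle^{2\alpha}$ into $H^{\alpha}_{s}$ regularity of $\Psi_\omega(s)=\chi_{[0,\infty)}(s)\,s\,\widehat V(s\omega)$---is also the paper's starting point (there it is phrased as repeated integration by parts), and your proof of (\ref{2.81}) is the paper's proof: radial derivative dominated by the full gradient, plus the dual Hardy inequality for $\||\xi|^{-1}\widehat V\|_{L^2}$. Your route to the $\|V\|_B$ bound, however, is genuinely different. The paper cuts $L_1$ into dyadic pieces in $t$ and invokes real interpolation, using $B=(L^2,|x|^{-1}L^2)_{1/2,1}$; you cut $V$ into dyadic shells, use the exact invariance of $\int_{S^2}\int_0^\infty|L_1|\dd t\dd\omega$ under $V\mapsto\lambda^2V(\lambda\,\cdot)$ (indeed $L_1\mapsto\lambda L_1(\lambda t\omega)$, so the $L^1_{t,\omega}$ norm is preserved), and apply Cauchy--Schwarz against $\langle t\rangle^{-1}$ on the unit shell, where $\|\langle x\rangle V\|_{L^2}\sim\|V\|_{L^2}$; the rescaled shell piece has $L^2$ norm $2^{k/2}\|V_k\|_{L^2}$, which sums to exactly $\|V\|_B$. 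This argument is correct and more elementary and self-contained than the paper's interpolation; it buys transparency, while the paper's abstract machinery runs the same dyadic idea in a form that transfers to its other weighted statements.

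The fractional estimates (\ref{2.81bis}) and (\ref{2.81tert}) are where your proposal has a genuine gap. The Heaviside-multiplier reduction is sound, including the ranges ($\alpha<3/2$ for first-order vanishing at $s=0$, $\alpha<5/2$ once the corner is removed), but after it everything rests on
\[
\int_{S^2}\big\||D_s|^{\alpha}\big(s\,\widehat V(s\omega)\big)\big\|_{L^2_s}^2\dd\omega\ \les\ \|\langle x\rangle^{\alpha}V\|_{L^2}^2,
\]
and the tools you name do not prove it. The commutator $[\,|D_s|^{\alpha},s\,]$ and fractional Hardy handle the lower-order piece of order $\alpha-1$, but not the main term $\int_{S^2}\|s\,|D_s|^{\alpha}\widehat V(s\omega)\|_{L^2_s}^2\dd\omega$: for non-integer $\alpha$ the fractional derivative along a ray is not the restriction of any derivative of $\widehat V$, so the integer-case mechanism (radial derivative $\le$ full gradient, pointwise) has no analogue, and you yourself defer this ``restriction-to-rays'' inequality as the principal obstacle without closing it. The inequality is true, and the paper's proof of it is the missing idea: interpolate the map $V\mapsto\chi_{[0,\infty)}(s)\,s\,\widehat V(s\omega)$ between $L^2\to L^2_\omega L^2_s$ and $|x|^{-2}L^2\to L^2_\omega\dot H^2_s$, the latter on the subspace $\widehat V(0)=0$, and then remove that constraint by density, which is legitimate because $V\mapsto\widehat V(0)$ is unbounded on $|x|^{-\alpha}L^2$ for $\alpha<3/2$. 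Two details in your endpoint discussion also need repair. First, the remainder after subtracting $L_1^1$ is not $\chi_{[0,\infty)}(s)\,s\,(\widehat V(s\omega)-\widehat V(0))$: that function grows linearly and lies in no $H^\alpha$; one must subtract a damped profile, and the choice (\ref{2.81tertt}) corresponds on the $s$ side to $c\,\widehat V(0)e^{-|s|/2}$, which matches the corner of $\Psi_\omega$ at $s=0$ and decays. Second, you inherit the paper's sign: two integrations by parts give $-\frac{t^2}{4}L_1=\widehat V(0)+\int_0^\infty\partial_s^2\big(s\widehat V(s\omega)\big)e^{-its/2}\dd s$, so the leading profile is $-4\langle t\rangle^{-2}\widehat V(0)$; with the coefficient $+4\widehat V(0)$ as written, (\ref{2.81tert}) would fail whenever $\widehat V(0)\ne0$. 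This is immaterial to the method, but worth fixing in both places.
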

\begin{proof} We prove this following Yajima \cite{yajima0}, for $L_1$ only, where
$$
L_1(t \omega) = \int_0^{\infty} \widehat V(s\omega) e^{-its/2} s \dd s.
$$
By Plancherel's identity,
$$
\int |L_1(t\omega)|^2 \dd t \les \int |\widehat V(s\omega)|^2 s^2 \dd s.
$$
Integrating over $\omega \in S^2$,
\be\lb{2.84}
\|L_1\|_{L^2_{t, \omega}}^2 = \int |L_1(t\omega)|^2 \dd t \dd \omega \les \|\widehat V\|_2^2 = C \|V\|_2^2.
\ee
Integrating by parts, we likewise obtain, from
$$
\frac {it} 2 L_1(t \omega) = \int_0^{\infty} \partial_s (\widehat V(s\omega) s) e^{-its/2} \dd s,
$$
that
\be\lb{2.85}
\|t L_1\|_{L^2_{t, \omega}}^2 = \int |t L_1(t\omega)|^2 \dd t \dd \omega \les (\|\dl \widehat V\|_2^2 + \||\eta|^{-1} \widehat V\|_{L^2_{\eta}}^2) \les \|V\|_{|x|^{-1} L^2}^2.
\ee
When $V \in \langle x \rangle^{-1} L^2$, both (\ref{2.84}) and (\ref{2.85}) are valid, so we combine them and obtain (\ref{2.81}).

One more integration by parts yields that
$$
- \frac {t^2} 4 L_1(t \omega) = \int_0^{\infty} \partial_s^2 (\widehat V(s\omega) s) e^{-its/2} \dd s - \widehat V(0).
$$
If $\widehat V(0) = 0$, it follows that
\be\lb{2.105}
\|t^2 L_1\|_{L^2_{t, \omega}}^2 \les (\|\dl^2 \widehat V\|_2^2 + \||\eta|^{-1} \dl \widehat V\|_{L^2_{\eta}}^2).
\ee
However, the condition $\widehat V(0) = 0$ is unstable under perturbations, so the best possible rate of decay in (\ref{2.105}) --- for generic Schwartz potentials, for example --- is $t^{-2}$. Further integrating by parts, one obtains an asymptotic expansion:
$$
L_1(t \omega) = \Big(\frac {it} 2\Big)^{-n} \int_0^{\infty} \partial_s^n (\widehat V(s\omega) s) e^{-its/2} \dd s + \sum_{\ell=0}^{n-1} \Big(\frac {it} 2\Big)^{-\ell-1} \partial^{\ell}_s (\widehat V(s\omega) s) \mid_{s=0}.
$$

We next exhibit an almost-optimal decay rate of $t^{-3/2+\epsilon} L^2_t$ for generic potentials.
Indeed, following (\ref{2.85}),
\be\lb{2.85bis}
\|t^{3/2-\epsilon} L_1(t\omega)\|_{L^2_{t, \omega}}^2 \les \|\chi_{[0, \infty)}(s) \widehat V(s\omega) s\|_{L^2_{\omega} \dot H^{3/2-\epsilon}_s} \les_{\epsilon} \|V\|_{|x|^{-3/2+\epsilon} L^2}.
\ee
To show the second inequality, begin by assuming that $V \in \langle x \rangle^{-2} L^2$ and $\widehat V(0) = 0$. Then
$$
\|\partial^2_{|\eta|} \widehat V(\eta)\|_{L^2_{\eta}} \les \|V\|_{|x|^{-2} L^2} \text{ and } \|\widehat V(\eta)\|_{L^2_{\eta}} = C \|V\|_{L^2}.
$$
By interpolation, we get that for all $V \in \langle x \rangle^{-2} L^2$ such that $\widehat V(0) = 0$
$$
\|s \partial^{3/2-\epsilon}\big(\chi_{[0, \infty)}(s) \widehat V(s\omega)\big)\|_{L^2_{s, \omega}} \les \|V\|_{|x|^{-3/2+\epsilon} L^2}.
$$
The set $\{V \in \langle x \rangle^{-2} L^2 \mid \widehat V(0) = 0\}$ is dense in $|x|^{-3/2+\epsilon} L^2$, so the conclusion obtains for all $V$ in $|x|^{-3/2+\epsilon} L^2$. Finally, by Leibniz's rule,
$$\begin{aligned}
\|\partial^{3/2-\epsilon}\big(s \chi_{[0, \infty)}(s) \widehat V(s\omega)\big)\|_{L^2_{s, \omega}} &\les\big(\|s \partial^{3/2-\epsilon}\big(\chi_{[0, \infty)}(s) \widehat V(s\omega)\big)\|_{L^2_{s, \omega}} \\
&+ \|\partial^{1/2-\epsilon}\big(\chi_{[0, \infty)}(s) \widehat V(s\omega)\big)\|_{L^2_{s, \omega}}\big).
\end{aligned}$$
The second term can be bounded by similar means and by using Hardy's inequality. We retrieve the second part of (\ref{2.85bis}).

For $V \in \langle x \rangle^{-\alpha} L^2$, both (\ref{2.84}) and (\ref{2.85bis}) are valid, so we obtain (\ref{2.81bis}).

(\ref{2.81tert}) is proved in the same manner, after isolating the leading-order term at infinity in the asymptotic expansion.

On the other hand, when $V \in B$ we apply the real interpolation method, see \cite{bergh}. Begin by partitioning $L_1$ into dyadic pieces
$$
L_{1j}(t\omega) = L_1(t\omega) (\chi(2^{-j-1} t) - \chi(2^{-j+1} t)).
$$
When $V \in L^2$, we rewrite (\ref{2.84}) as
$$
L_{1j}(t\omega) \in \ell^2_j(L^2_{t, \omega}),\ \|L_{1j}(t\omega)\|_{\ell^2_j(L^2_{t, \omega})} \les \|V\|_2.
$$
Likewise, (\ref{2.85}) becomes
$$
L_{1j}(t\omega) \in 2^{-j} \ell^2_j(L^2_{t, \omega}),\ \|L_{1j}(t\omega)\|_{2^{-j} \ell^2_j(L^2_{t, \omega})} \les \|V\|_{|x|^{-1} L^2}.
$$
By real interpolation we obtain, since $B$ was chosen precisely to be the real interpolation space $B = (L^2, |x|^{-1} L^2)_{(\frac 1 2, 1)}$, that
$$
\|L_{1j}(t\omega)\|_{2^{-j/2} \ell^1_j(L^2_{t, \omega})} \les \|V\|_B,
$$
hence
$$
\|L_1\|_{L^1_{t, \omega}} \les \|L_{1j}(t\omega)\|_{\ell^1_j(L^1_{t, \omega})} \les\|L_{1j}(t\omega)\|_{2^{-j/2} \ell^1_j(L^2_{t, \omega})} \les \|V\|_B.
$$
\end{proof}

We identify the subspace of elements of $Y$ with two specific properties, which we call \emph{continuity} and \emph{decay at infinity}.

In particular, for every $V \in B$, $(\frak X_{V+})^2 := T_{1+} \oast T_{1+} = T_{2+}$ has them:
\begin{lemma}\lb{lm2.13} Assume $V \in B$ and let $T_{1+}$ be given by (\ref {2.16}). Then
\be\lb{2.73}
\lim_{y_0 \to 0} \|(T_{1+} \oast T_{1+})(x_0, x_1, y + y_0) - (T_{1+} \oast T_{1+})(x_0, x_1, y)\|_Y = 0
\ee
and, uniformly for all contractions $e_{f, 0, s}$ with $\|f\|_{L^\infty} \leq 1$, $s \in \ISO(3)$,
\be\lb{2.74}\begin{aligned}
&\lim_{R \to \infty} \sup_{\substack{\|f\|_{L^\infty} \leq 1\\ s \in \ISO(3)}} \|(1-\chi(y/R)) e_{f, 0, s} T_{1+}(x_0, x_1, y)\|_X = \\
&= \lim_{R \to \infty} \sup_{\substack{\|f\|_{L^\infty} \leq 1\\ s \in \ISO(3)}} \|(1-\chi(y/R)) \frak X_{V_f+}(x_1, s^{-1} y + x_1 - s^{-1} x_1)\|_X = 0.
\end{aligned}\ee
\end{lemma}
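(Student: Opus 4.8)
The two assertions reflect different mechanisms, so I would treat them separately. Estimate (\ref{2.74}) is a decay statement about $T_{1+}$ alone, and I expect it to follow from the integrability bounds of Lemma \ref{lema2.9} together with a truncation of $V$. Estimate (\ref{2.73}) is a continuity statement that genuinely exploits the composition: by (\ref{2.100}) the measure representing $T_{1+}$ is \emph{singular} in $y$ --- it lives on the three-dimensional locus of reflections $S_\omega$ paired with radial translations $t\omega$ --- so $T_{1+}$ itself is not translation-continuous in $Y$, and it is precisely the convolution built into $\oast$ that manufactures an absolutely continuous, hence translation-continuous, kernel.

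\textbf{Decay (\ref{2.74}).} I would first recall from the proof of Lemma \ref{lema2.13} that the contraction $e_{f,0,s}T_{1+}$ equals $\frak X_{V_f+}$ after the substitution $g_{\sigma,y}\mapsto g_{s\sigma,sy}$, where $V_f=f(-x)V(x)$. Since $s$ is an isometry, this substitution preserves both the total variation and the region $\{|y|>R\}$ (as $|sy|=|y|$), so the two $X$-norms in (\ref{2.74}) are literally equal; by (\ref{eq2.33}) the common value reduces to the tail integral $\int_{|y|>R}\big(|L_1^{V_f}(y)|+|\tilde L_1^{V_f}(y)|\big)\,|y|^{-2}\dd y$, whose cutoff now sits on the radial translation variable $|y|$ of (\ref{2.32}). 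I would then split $V=V'+V''$ with $V'=V\chi_{|x|\le N}$ compactly supported and $\|V''\|_B<\delta$. Because $B$ is a Banach lattice and $\|f\|_\infty\le 1$, the $V''$-piece contributes $\les\|V''_f\|_B\le\|V''\|_B<\delta$, uniformly in $f,s$. For the $V'$-piece, $V'\in\la x\ra^{-\alpha}L^2$ for every $\alpha$, so invoking (\ref{2.81bis}) with some fixed $\alpha\in(1/2,3/2)$ and Cauchy--Schwarz gives a tail $\les R^{-(\alpha-1/2)}\|V'\|_{\la x\ra^{-\alpha}L^2}$; and $\|V'_f\|_{\la x\ra^{-\alpha}L^2}\le\|V'\|_{\la x\ra^{-\alpha}L^2}$ again by the lattice property, so this too is uniform in $f,s$. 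Letting $R\to\infty$ and then $\delta\to 0$ yields (\ref{2.74}).

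\textbf{Continuity (\ref{2.73}).} Here I would argue by density. Using translation-invariance of $\|\cdot\|_Y$ and the algebra inequality of Lemma \ref{yalg}, for Schwartz $V_n\to V$ in $B$ one has $\|T_{2+}(V)-T_{2+}(V_n)\|_Y\les(\|V\|_B+\|V_n\|_B)\|V-V_n\|_B$, so it suffices to establish the continuity for Schwartz $V$ and pass to the limit. For Schwartz $V$ the crux is that $T_{1+}\oast T_{1+}$ has an \emph{absolutely continuous} measure in $y$: each factor contributes a reflection $S_{\omega_i}$ and a radial shift $t_i$, the composed affine map being $x\mapsto S_{\omega_1}S_{\omega_2}x+\big(t_1\omega_1+t_2 S_{\omega_1}\omega_2\big)$, so the translation map $(\omega_1,\omega_2,t_1,t_2)\mapsto t_1\omega_1+t_2 S_{\omega_1}\omega_2$ is generically a submersion onto $\R^3$ and pushes the product of the two sphere-supported singular measures forward to an $L^1_y$ density. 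Exactly as in the decay step, I would then reduce (\ref{2.73}) to uniform $L^1_y$-continuity of translations of the contractions $e_{f,0,s}T_{2+}$, which holds because their densities are $L^1_y$ functions bounded by $C\|V\|_B^2$ uniformly in $f,s$, and translations act continuously on $L^1$.

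\textbf{Main obstacle.} The delicate point is the absolute-continuity step for (\ref{2.73}). The pushforward density is singular along the degenerate locus $\omega_1=\pm\omega_2$, where $t_1\omega_1+t_2 S_{\omega_1}\omega_2$ collapses onto a line and the Jacobian of the translation map vanishes; I must show this singularity is integrable, i.e.\ that the contribution of the near-diagonal region $|\omega_1\mp\omega_2|<\eta$ tends to $0$ with $\eta$, uniformly in $f,s$. I expect to control it by pairing the $|y|^{-2}$ weights from (\ref{2.100}) against the $L^2_t$ bounds of Lemma \ref{lema2.9}, so that the radial integrations absorb the angular degeneracy --- the same balance that already keeps $\|T_{2+}\|_Y\les\|V\|_B^2$ finite in Lemma \ref{lema2.13}.
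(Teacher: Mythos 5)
Your overall skeleton---treating (\ref{2.74}) as a decay statement about $T_{1+}$ alone, proved by splitting off a piece that is small in $B$ plus a nicer piece with a quantitative weighted bound, and treating (\ref{2.73}) as a smoothing effect of $\oast$, proved for Schwartz $V$ and extended by density---is indeed the paper's skeleton. But both halves have genuine gaps. In the decay step, the claim that $V'=V\chi_{|x|\le N}$ lies in $\langle x\rangle^{-\alpha}L^2$ is false in general: $B$ does not control local square-integrability, because the weight $2^{k/2}$ degenerates as $k\to-\infty$. A radial $V$ with $\|\chi_{|x|\in[2^k,2^{k+1}]}V\|_{L^2}=|k|^{-1/2}$ for $k\le-2$ and zero otherwise has $\|V\|_B=\sum_k 2^{k/2}|k|^{-1/2}<\infty$, is compactly supported, yet is not in $L^2$, hence in no $\langle x\rangle^{-\alpha}L^2$; for such $V$ your compactly supported piece satisfies none of the bounds (\ref{2.81})--(\ref{2.81bis}). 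The repair is what the paper does: approximate $V$ in the $B$-norm by potentials in $\langle x\rangle^{-1}L^2$ (dense, since $B=(L^2,|x|^{-1}L^2)_{1/2,1}$), apply Lemma \ref{lema2.9} to the nice piece, and use linearity and the lattice property for the error---your two-piece scheme, but with the correct dense class.

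The continuity step has a structural problem beyond the degenerate locus you flagged. The $X$- and $Y$-norms integrate $\esssup_x$ of an $L^\infty_x$-valued measure, and your group-composition representation of $T_{1+}\oast T_{1+}$ (rotations $S_{\omega_1}S_{\omega_2}$, translations $t_1\omega_1+t_2S_{\omega_1}\omega_2$, coefficients carrying half-space cutoffs in \emph{both} directions) is unsuited to this norm: for a fixed composed rotation, translating $y$ forces the parameters $(\omega_1,\omega_2,t_1,t_2)$ to move along the fiber, hence rotates the half-space cutoffs in $x$, and $\esssup_x$ of the difference of indicators of two distinct half-spaces is of order one however small $y_0$ is. So your pushforward argument yields absolute continuity of the scalar mass in $y$, which does not imply smallness of the total-variation modulus that (\ref{2.73}) requires. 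The paper avoids this with a different representation, (\ref{2.134}): integrating out the inner frequency variable puts the measure of $T_{2+}$ on single reflections $S_{\omega_2}$, with $x$-dependence $\chi_{(-\infty,t_2/2)}(x\cdot\omega_2)$ tied only to $\omega_2$ and with the $y$-variable absolutely continuous through $y_1$; a translation of $y$ is then absorbed entirely by $y_1$ without touching the $x$-structure, reducing (\ref{2.73}) to condition (\ref{cnd}), i.e.\ $L^1$-continuity of $L_2(y_1,y_2)/(|y_1|^2|y_2|^2)$ in $y_1$. Moreover, your concluding inference ``the densities are bounded in $L^1$ and translations act continuously on $L^1$'' is a non sequitur: (\ref{2.73}) is a supremum over all contractions $(f,s)$, and translation continuity is not uniform on bounded subsets of $L^1$. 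What is needed is equicontinuity of the whole family, which the paper extracts from the quantitative bound $\int_{\{\epsilon<|y_1|<R\}\times\R^3}|\partial_{y_1}L_2|\,|y_1|^{-2}|y_2|^{-2}\dd y_1\dd y_2<\infty$ for Schwartz $V$, together with the mass bound $\int|L_2|\,|y_1|^{-2}|y_2|^{-2}\les\|V\|_B^2$; these are stable under $V\mapsto V_f$, $\|f\|_{\infty}\le 1$, since they involve only moments of $V$, and they are also exactly what controls the degeneracies you left open. As written, the core of (\ref{2.73}) is missing.
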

(\ref{2.73}) and (\ref{2.74}) together or separately define Banach subalgebras of~$Y$.

For $T \in Y$, let
$$
\mc F_y T(x_0, x_1, \eta) = \int_{\R^3} T(x_0, x_1, y) e^{-iy\eta} \dd y.
$$
(\ref{2.73}) and (\ref{2.74}) are useful because we can localize on the Fourier side when (\ref{2.74}) holds and we can disregard the tail at infinity on the Fourier side when (\ref{2.73}) holds.

We take both (\ref{2.73}) and (\ref{2.74}) as hypotheses in the abstract version of Wiener's theorem.

\begin{proof} 
Condition (\ref{2.74}) is clearly true when $V \in \langle x \rangle^{-1} L^2$, so $T_{1+} \in Y_1$ and all contractions of $T_{1+}$ have the same specified decay rate at infinity.

Thus, when $V \in B$, we approximate it by potentials in $\langle x \rangle^{-1} L^2$ and (\ref{2.74}) follows.

Translation by $y_0$ in (\ref{2.73}) exactly corresponds to translation by $y_0$ of the measure $g_{s, y}(x)$ associated to $W\in X$, see (\ref{2.9}) and (\ref{2.31}).

Then (\ref{2.73}) is implied by
\be\begin{aligned}\lb{cnd}
\lim_{y_0 \to 0} & \int_{\R^3} \int _{\ISO(3)} \esssup_x |g_{s, y}(x) - g_{s, y+y_0}(x)| \dd s \dd y = 0.
\end{aligned}\ee

For $T_{1+}$, $g_{s, y}(x)$ is a singular measure in $(s, y)$. For fixed $s=S_\omega$, the support of $g_{s, y}(x)$ has codimension two, being the half-line $y=-t\omega$, $t \geq 0$; see (\ref{2.100}).

Since the support is singular, but not punctual, repeated convolutions make $g_{s, y}(x)$ smoother, hence $T_{2+}$ is more regular.

For $W_{n+}$, $n >1$, the analogous measure has the formula
\be\begin{aligned}\lb{2.134}
g_{s=S_{\omega_n}, y}(x) = \int_{\substack{y = y_1 + \ldots + y_{n-1} + t_n \omega_n}} \chi_{(-\infty, t_n/2)}(x \cdot \omega_n)\ \frac{L_n(y_1, \ldots, t_n \omega_n)}{|y_1|^2 \ldots |y_{n-1}|^2} + \\
+ \chi_{(t_n/2, \infty)}(x \cdot \omega_n)\ \frac{\tilde L_n(y_1, \ldots, t_n \omega_n)}{|y_1|^2 \ldots |y_{n-1}|^2} \dd y_1 \ldots \dd y_{n-1} \dd t_n,
\end{aligned}\ee
where
$$\begin{aligned}
L_n(t_1\omega_1, \ldots, t_n \omega_n) = \int_{[0, \infty)^n} &\prod_{k=1}^n \widehat {V}(s_k \omega_k - s_{k-1} \omega_{k-1}) e^{-it_k s_k/2} \\
&s_1 \ldots s_n \dd s_1 \ldots \dd s_n,
\end{aligned}$$
respectively
$$\begin{aligned}
\tilde L_n(t_1\omega_1, \ldots, t_n \omega_n) = \int_{[0, \infty)^{n-1}} \int_{(-\infty, 0]} &\prod_{k=1}^n \widehat {V}(s_k \omega_k - s_{k-1} \omega_{k-1}) e^{-it_k s_k/2} \\
&s_1 \ldots s_n \dd s_1 \ldots \dd s_n.
\end{aligned}$$
Assume that $V \in \mc S$ and take $n=2$. Then
$$\begin{aligned}
g_{s=S_{\omega_2}, y}(x) = \int_{\substack{y = y_1 + t_2 \omega_2}} \chi_{(-\infty, t_2/2)}(x \cdot \omega_2)\ \frac{L_2(y_1, t_2 \omega_2)}{|y_1|^2} + \\
+ \chi_{(t_2/2, \infty)}(x \cdot \omega_2)\ \frac{\tilde L_2(y_1, t_2 \omega_2)}{|y_1|^2} \dd y_1 \dd t_2.
\end{aligned}$$
Condition (\ref{cnd}) reduces to
$$\begin{aligned}
\lim_{y_0 \to 0} \int_{(\R^3)^2} &\Big|\frac{L_2(y_1+y_0, y_2)}{|y_1+y_0|^2 |y_2|^2} - \frac{L_2(y_1, y_2)}{|y_1|^2 |y_2|^2}\Big| + \\
&+ \Big|\frac{\tilde L_2(y_1+y_0, y_2)}{|y_1+y_0|^2 |y_2|^2} - \frac{\tilde L_2(y_1, y_2)}{|y_1|^2 |y_2|^2}\Big| \dd y_1 \dd y_2 = 0.
\end{aligned}$$
With no loss of generality consider only $L_2$. As in Lemma \ref{lema2.9} or \cite{yajima0}, it is the case that
$$
\int_{(\R^3)^2} \Big|\frac{L_2(y_1, y_2)}{|y_1|^2 |y_2|^2}\Big| \les \|V\|_B^2.
$$
Then it suffices to show that for any $\epsilon>0$, $R< \infty$
$$
\int_{\{y_1 \mid \epsilon < |y_1| < R\} \times \R^3} \frac{|\partial_{y_1} L_2(y_1, y_2)|}{|y_1|^2 |y_2|^2} \dd y_1 \dd y_2 < \infty.
$$
Note that
$$\begin{aligned}
L_2(t_1\omega_1, t_2 \omega_2) = \int_{[0, \infty)^2} \widehat V(s_2 \omega_2 - s_1 \omega_1) \widehat V(s_1\omega_1) e^{-it_2 s_2/2-it_1s_1/2} s_1 s_2 \dd s_1 \dd s_2.
\end{aligned}$$
Differentiating this expression in $t_1$ or $\omega_1$ leads at most to an extra factor of $s_1$ and an extra derivative on $\widehat V(s_1 \omega_1)$, so for $\widehat V \in \mc S$
$$
\int_{\{y_1 \mid \epsilon < |y_1| < R\} \times \R^3} \frac{|\partial_{y_1} L_2(y_1, y_2)|}{|y_1|^2 |y_2|^2} \dd y_1 \dd y_2 < \infty.
$$
This suffices to prove (\ref{2.73}) for $V \in \mc S$. Again by approximation, we obtain that (\ref{2.73}) holds for any $V \in B$.
\end{proof}



\subsection{Proofs of the main statements}
We next prove the principal result, Theorem \ref{theorem_1.1}.
\begin{proof}[Proof of Theorem \ref{theorem_1.1}]
From the beginning, we set $\epsilon = 0$.



Let $\chi$ be a smooth cutoff function such that $\widehat \chi(\eta) = 1$ on $B(0, 1)$ and $\supp \widehat \chi \subset B(0, 2)$. Denote
$$
\chi_{\delta}(y) = \delta^{3} \chi(\delta y), \chi_R(y) = R^{3} \chi(R y).
$$
Then $\widehat{\chi_{\delta}}(\eta) = \widehat{\chi}(\delta^{-1} \eta)$ and $\widehat{\chi_R}(\eta) = \widehat{\chi}(R^{-1} \eta)$.

Firstly, due to (\ref{2.73}) from Lemma \ref{lm2.13}, we can cut the high frequencies off of $T_{1+} \oast T_{1+}$ by convolution with a localized function:
\be\lb{2.120}
\lim_{R \to \infty} \|\chi_R(y) * (T_{1+} \oast T_{1+}) - T_{1+} \oast T_{1+}\|_Y = 0.
\ee
We use (\ref{2.120}) to show that, for any sufficiently large $R$,
\be\lb{2.121}
(I-\chi_R(y)) * (I+T_{1+})^{-1} \in Y.
\ee
Indeed, consider two large radii $R_1$ and $R_2$ with $2R_1<R_2$. Then
$$\begin{aligned}
&(I-\chi_{R_2}(y)) * (I + T_{1+})^{-1} =\\
&= (I-\chi_{R_2}(y)) * \big(I - (I-\chi_{R_1}(y))*(T_{1+} \oast T_{1+})\big)^{-1} \oast (I - T_{1+}).
\end{aligned}$$
This formal computation is legitimate if conducted in the space $Z$, instead of $Y$. By (\ref{2.120}), for large enough $R_1$, the series
\be
(I + (I-\chi_{R_1}(y))*T_{1+}\oast T_{1+})^{-1} = \sum_{\ell=0}^{\infty} (-1)^{\ell} \big((I-\chi_{R_1}(y))*T_{1+}\oast T_{1+}\big)^{\ell}
\ee
converges in $Y$. Thus, $\big(I + (I-\chi_{R_1}(y))*T_{1+} \oast T_{1+}\big)^{-1} \in Y$, implying (\ref{2.121}).

Next, by Lemma \ref{lm2.13} again, we can localize $T_{1+}$ in frequency at each fixed $\eta \in \set R^3$. $T_{1+}$, localized at frequency $\eta$ by convolution, is very close to a product operator, of the type described in Lemma \ref{lemma2.14}: for each $\eta$
\be\lb{lalala}
\lim_{\delta \to 0} \|(e^{i\eta y} \chi_{\delta}(y)) * T_{1+} - \big(\mc F_y T_{1+}(\eta)\big)(x_0, x_1)\, e^{i\eta y} \chi_{\delta}(y)\|_Y = 0.
\ee
Let us prove (\ref{lalala}), while keeping the proof as general as possible. Explicitly written, $(e^{i\eta y} \chi_{\delta}(y)) * T_{1+}$ has the form
$$
(e^{i\eta y} \chi_{\delta}(y)) * T_{1+}(x_0, x_1, y_0) = \int_{\R^3} e^{i\eta(y_0-y)} \chi_\delta(y_0-y) T(x_0, x_1, y) \dd y.
$$
For $s \in \ISO(3)$ and $\|f\|_{L^\infty} \leq 1$, its corresponding contraction $e_{f, 0, s} (e^{i\eta y} \chi_{\delta}(y)) * T_{1+}$ takes the form
$$\begin{aligned}
&e_{f, 0, s} (e^{i\eta y} \chi_{\delta}(y)) * T_{1+}(x_1, y_0) = \\
&= \int_{\R^6} e^{i\eta(y_0+x_0-sx_0-y)} \chi_\delta(y_0+x_0-sx_0-y) f(x_0) T_{1+}(x_0, x_1, y) \dd x_0 \dd y \\
&= \int_{\R^6} e^{i\eta(y_0-y)} \chi_\delta(y_0-y) f(x_0) T_{1+}(x_0, x_1, y+x_0-sx_0) \dd x_0 \dd y \\
&= \int_{\R^3} e^{i\eta(y_0-y)} \chi_\delta(y_0-y) \frak X_{V_f+}(x_1, s^{-1} y + x_1 - s^{-1} x_1) \dd y,
\end{aligned}$$
where, if $\frak X_{V_f+}(x_1, y)$ is given by the measure $g_{\sigma, y}(x)$, then
$$
\frak X_{V_f+}(x_1, s^{-1} y + x_1 - s^{-1} x_1)
$$
is given by the measure $g_{s\sigma, sy}(x)$ and
$$
\|\frak X_{V_f+}(x_1, s^{-1} y + x_1 - s^{-1} x_1)\|_X = \|\frak X_{V_f+}(x_1, y)\|_X \les \|T_{1+}\|_Y.
$$
Due to Lemma \ref{lm2.13}, uniformly in $s$ and $f$, $\|f\|_{L^\infty} \leq 1$,
$$
\|(1-\chi(y/R)) \frak X_{V_f+}(x_1, s^{-1} y + x_1 - s^{-1} x_1)\|_X \to 0
$$
as $R \to \infty$. Then we can fix $R$ such that
$$
\Big\|\int_{\R^3} e^{i\eta(y_0-y)} \chi_\delta(y_0-y) (1-\chi(y/R)) \frak X_{V_f+}(x_1, s^{-1} y + x_1 - s^{-1} x_1) \dd y\Big\|_X < \epsilon.
$$
Within the remaining set of radius $\les R$ we use the fact that
$$
\int_{\R^3} \sup_{y \in \R^3} \big(|\chi_\delta(y_0-y)-\chi_\delta(y_0)| \chi(y/R)\big) \dd y_0 \les \delta (1+R^3).
$$
Then, by definition,
$$\begin{aligned}
&\Big\|\int_{\R^3} e^{i\eta(y_0-y)} (\chi_\delta(y_0-y)-\chi_\delta(y_0)) \chi(y/R) \frak X_{V_f+}(x_1, s^{-1} y + x_1 - s^{-1} x_1) \dd y\Big\|_X \les \\
&\les \int_{\R^3} \bigg\|\int_{\R^3} |\chi_\delta(y_0-y)-\chi_\delta(y_0)| \chi(y/R) |\frak X_{V_f+}(x_1, s^{-1} y + x_1 - s^{-1} x_1)| \dd y\bigg\|_{L^\infty_{x_1}} \dd y_0 \\
&\les \|\frak X_{V_f+}(x_1, s^{-1} y + x_1 - s^{-1} x_1)\|_{L^1_y L^\infty_{x_1}} \int_{\R^3} \sup_{y \in \R^3} \big(|\chi_\delta(y_0-y)-\chi_\delta(y_0)| \chi(y/R)\big) \dd y_0 \\
&\les \|T_{1+}\|_Y \delta (1+R^3).
\end{aligned}$$
Thus, as $\delta$ goes to $0$, uniformly in $s \in \ISO(3)$ and $f$, $\|f\|_{L^\infty} \leq 1$,
\be\lb{diff}
\bigg\|\int_{\R^3} e^{i\eta(y_0-y)} (\chi_\delta(y_0-y)-\chi_\delta(y_0)) \frak X_{V_f+}(x_1, s^{-1} y + x_1 - s^{-1} x_1) \dd y\bigg\|_X \to 0.
\ee
On the other hand,
$$\begin{aligned}
&\int_{\R^3} e^{i\eta(y_0-y)} \chi_\delta(y_0) \frak X_{V_f+}(x_1, s^{-1} y + x_1 - s^{-1} x_1) \dd y = \\
&= \int_{\R^6} e^{i\eta(y_0-y)} \chi_\delta(y_0) f(x_0) T_{1+}(x_0, x_1, y + x_0 - s x_0) \dd x_0 \dd y \\
&= \int_{\R^6} e^{i\eta(y_0+x_0-sx_0-y)} \chi_\delta(y_0+x_0-sx_0) f(x_0) T_{1+}(x_0, x_1, y) \dd x_0 \dd y \\
&= \int_{R^3} e^{i\eta(y_0+x_0-sx_0} \chi_\delta(y_0+x_0-sx_0) f(x_0) \big(\mc F_y T_{1+}(\eta)\big)(x_0, x_1) \dd x_0 \\
&= e_{f, 0, s} \big(\mc F_y T_{1+}(\eta)(x_0, x_1) e^{i\eta y_0} \chi_{\delta}(y_0)\big).
\end{aligned}$$
Thus, (\ref{diff}) precisely means that
$$
\lim_{\delta \to 0} \|(e^{i\eta y} \chi_\delta(y)) \ast T_{1+} - \mc F_y T_{1+}(\eta)\big)(x_0, x_1) e^{i\eta y} \chi_{\delta}(y)\|_Y = 0.
$$

For fixed $\eta_0 \in \R^3$, consider two small radii $\delta_1 > 2 \delta_2$ and cutoff functions $\widehat \chi_1(\eta) := \widehat \chi_{\delta_1}(\eta-\eta_0)$, $\widehat \chi_2(\eta) := \widehat \chi_{\delta_2}(\eta-\eta_0)$. 
Then
\be\begin{aligned}
\chi_2 * (I + T_{1+})^{-1} &= \chi_2 * (I + \chi_1 * T_{1+})^{-1} \\
&= \chi_2 * (I + \chi_1(y) \mc F_y T_{1+}(\eta) - (\chi_1(y) \mc F_y T_{1+}(\eta_0) - \chi_1 * T_{1+}))^{-1}.
\end{aligned}\ee
By Lemma \ref{lemma2.3}, $I + \mc F_y T_{1+}$ is invertible at each point, with uniformly bounded inverses:
\be\begin{aligned}
(I + \mc F_y T_{1+}(\eta))^{-1} &= (I + e^{i x_1 \eta} R_0(|\eta|^2+i0)(x_1, x_0) e^{-ix_0 \eta} V(x_0))^{-1} \\
&= I - e^{i x_1 \eta} R_V(|\eta|^2+i0) V e^{-ix_0 \eta} \\
&:= I - \tilde T_{1+}(\eta) \in \B(L^{\infty}_{x_0}, L^{\infty}_{x_1}).
\end{aligned}\ee
Due to this fact, we can invert these product operators locally. A local inverse is provided by
\be
\chi_2 * (I + \chi_1(y) \mc F_y T_{1+}(\eta)) \oast (I - \chi_1(y) \tilde T_{1+}(\eta)) = \chi_2 * I.
\ee
Note that the norm of $I - \chi_1(y) \tilde T_{1+}(\eta)$ is uniformly bounded by Lemma \ref{lm2.13}, independently of $\delta_2$.

If $\delta_1$ is sufficiently small, then by (\ref{lalala}) the series
\be\begin{aligned}
&\chi_2 * (I + \chi_1(y) \mc F_y T_{1+}(\eta_0) - (\chi_1(y) \mc F_y T_{1+}(\eta_0) - \chi_1 * T_{1+}))^{-1} =\\
&= \sum_{\ell=0}^{\infty} \chi_2 * (I - \chi_1(y) \tilde T_{1+}(\eta)) \big((\chi_1(y) \mc F_y T_{1+}(\eta_0) - e^{i\eta_0y}\chi_1 * T_{1+}) (I - \chi_1(y)\tilde T_{1+}(\eta))\big)^{\ell}
\end{aligned}\ee
converges in $Y$, implying that
\be
\chi_2 * (I+T_{1+})^{-1} \in Y.
\ee

Thus, each point $\eta_0 \in \set R^3$ has some neighborhood $\mc N(\eta_0)$ such that, for some smooth, compactly supported function $\widehat \chi_{\eta_0}$ with $\widehat \chi_{\eta_0}(\eta) = 1$ on $\mc N_{\eta_0}$,
\be
\chi_{\eta_0} * (I + T_{1+})^{-1} \in Y.
\ee
For any ball $B(0, R)$, choose a finite covering
$$
B(0, R) \subset \mc N(\eta_1) \cup \ldots \mc N(\eta_n)
$$
by such neighborhoods and a subordinated partition of unity with $\supp \tilde \chi_{\ell} \subset \mc N(\eta_{\ell})$ and $\sum_{\ell = 1}^n \tilde \chi_\ell(\eta) = 1$ on $B(0, R)$.

Then there exists a smooth, compactly supported function $\tilde \chi_R$ such that $\tilde \chi_R(\eta) = 1$ on $B(0, R)$ and
$$
\tilde \chi_R * (I + T_{1+})^{-1} \in Y.
$$
For large enough $R$, $(I - \chi_{R/2}) * (I + T_{1+})^{-1} \in Y$, so
$$
(I + T_{1+})^{-1} = \tilde \chi_R * (I + T_{1+})^{-1} + (I - \tilde \chi_R) * (I - \chi_{R/2}) * (I + T_{1+})^{-1} \in Y.
$$
Hence $I+T_{1+}$ is invertible in $Y$. By Lemma \ref{lm2.4}, we already knew that $I + T_{1+}$ is invertible in $Z$ and its inverse is $I - T_+$. Since $Y \subset Z$ and the inverse is unique, we obtain that $I - T_+ \in Y$. Hence $W_+ \in X$, by (\ref{eqn2.13}), (\ref{eqn2.29}), and (\ref{2.31}).
%
%
\end{proof}

\subsection{A general formulation of Wiener's theorem}\lb{general_wiener}
Wiener's tauberian theorem, obtained by Wiener in 1932, indicates that if $f \in \widehat {L^1}(\set T)$ is such that $f(\xi) \ne 0$ for all $\xi \in \set T$, then $1/f \in \widehat {L^1}(\set T)$.

We state and apply the theorem in a form adapted to the wave operator problem. We provide this formulation for reference in this section, though it is already contained in the proof of Theorem \ref{theorem_1.1}.

The statement uses of two spaces, $X_A$ and $Y_A$, indexed by a Banach space $A$, $A \subset L^{\infty}$. In the proof $A$ is either $L^{\infty}$ or
\be\lb{aaa}
A := \Big\{g = \int_{S^2} g_{\omega}(x \cdot \omega) \dd \omega\mid \int_{S^2} \|g_{\omega}\|_{\infty} + \|g_{\omega}'\|_{\mc M} \dd \omega < \infty\Big\}.
\ee
Otherwise put, $A$ is the space of integrable combinations of characteristic functions of half-spaces.

In the proof of Theorem \ref{theorem_1.1}, note that $g_{s, y}(x) \in A$ for almost all $y$ and $s$ in (\ref{2.100}) and that (\ref{2.32}) can be strengthened to
\be\lb{2.136}
\int_{\set R^3} \int_{\ISO(3)} \dd \|g_{s, y}\|_A \dd y \les \|V\|_B.
\ee

Redefine elementary transformations to be mappings of the form
$$
e_{g, y, s} f(x) := g(x) f(s x + y),
$$
where $g \in A$, $s \in \ISO(3)$, and $y \in \R^3$. 

Let $X_A$ be the space of two-variable kernels
$$\begin{aligned}
X_A := &\big\{\frak X = \frak X(x, y) \mid \frak X(x, y) = \int_{\ISO(3)} g_{s, y + x - s x}(x) \dd s,\\
&\int_{\set R^3} \int_{\ISO(3)} \dd\|g_{\omega, y}\|_A < \infty\big\},
\end{aligned}$$
where $\dd\|g_{\omega, y}\|_A \in \mc M_{\omega, y}$. Thus $X$ is the space of integrable combinations of elementary transformations.

Since $X_A \subset X$, elements of $X_A$ are well-defined as distributions on $\R^6$.

Let $Y_A$ be the space of three-variable kernels
$$\begin{aligned}
Y_A &:= \{T(x_0, x_1, y) \in Z \mid \forall s \in \ISO(3) T(x_0, x_1, y + x_0 - sx_0) \in Z,\\
&\forall g \in A \forall s \in \ISO(3) \int_{R^3} g(x_0) T(x_0, x, y+x_0-s x_0) \dd x_0 \in X_A\}.
\end{aligned}$$


Thus $Y_A$ consists of those kernels $T$ whose contraction $e_{g, y, s}(T)$ done by means of any elementary transformation $e_{g, y, s}$ belongs to $X_A$.

$g_{s, y}(x) \in \mc M_{s, y, x}$ is an $A$-valued measure, possibly singular in $y$ and $\omega$, such that $\|g_{s, y}\|_A \in \mc M_{s, y}$ is a finite-mass positive Borel measure.


Define the operation $\mc F_y (T_1 \oast T_2) := \mc F_y T_1 \circ \mc F_y T_2$ for any $T_1$, $T_2 \in Y_A$. $(Y_A, \oast)$ is a Banach algebra in this general setting by the same proof as that of Lemma \ref{lemma2.1}.

We state the Wiener-type theorem in $Y_A$.

\begin{theorem}\lb{thm7}
If $T \in Y_A$ is invertible in $Y_A$, then $\widehat T(\eta)$ is invertible in $\B(A, A)$ for every $\eta \in \set R^3$. Conversely, if $\widehat T(\lambda)$ is invertible in $\B(A, A)$ for each $\lambda$, $T = I + L$,~and for some $n \geq 1$
$$
\lim_{y \to 0} \|L(\cdot + y)^n - L^n\|_{Y_A} = 0,\ \lim_{R \to \infty} \sup_{\substack{\|f\|_{L^\infty} \leq 1 \\ s \in \ISO(3)}}\|(1-\chi(y/R)) e_{f, 0, s} L(y)\|_{X_A} =0,
$$
then $T$ is invertible in $Y_A$.
\end{theorem}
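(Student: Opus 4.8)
The plan is to run the argument from the proof of Theorem \ref{theorem_1.1}, of which this statement is the abstract distillation, with $L$ in place of $T_{1+}$, the two displayed hypotheses in place of the continuity and decay properties (\ref{2.73})--(\ref{2.74}), and the assumed pointwise invertibility of $\widehat T(\eta)$ in place of Lemma \ref{lemma2.3}.

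\textbf{Forward direction.} If $T$ has a two-sided inverse $S$ in $(Y_A,\oast)$, I would apply $\mc F_y$, which by its very definition is a homomorphism carrying $\oast$ to composition $\circ$ and the unit (\ref{ident}) to the identity $I_A \in \B(A,A)$. From $T\oast S = S\oast T = I$ one then reads off $\widehat T(\eta)\circ\widehat S(\eta) = \widehat S(\eta)\circ\widehat T(\eta) = I_A$ for every $\eta$; since $S \in Y_A$ forces $\widehat S(\eta) \in \B(A,A)$ (the $A$-valued form of Lemma \ref{lema2.4}), $\widehat T(\eta)$ is invertible in $\B(A,A)$, as claimed.

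\textbf{Converse.} Write $T = I + L$ and split $(I+L)^{-1}$, once it is shown to exist, into a high-frequency tail and finitely many frequency-localized pieces. The algebraic backbone is the telescoping identity $(I+L)\oast\big(\sum_{k=0}^{n-1}(-L)^{\oast k}\big) = I - (-1)^n L^{\oast n}$, which formally yields $(I+L)^{-1} = \big(\sum_{k=0}^{n-1}(-L)^{\oast k}\big)\oast\big(I-(-1)^n L^{\oast n}\big)^{-1}$. For the \emph{tail}, the first hypothesis (continuity of $L^{\oast n}$ under translation in $y$) forces $\chi_R * L^{\oast n}\to L^{\oast n}$ in $Y_A$, so that $(I-\chi_{R_1})*L^{\oast n}$ has arbitrarily small $Y_A$-norm for $R_1$ large; a Neumann series for $\big(I - (-1)^n(I-\chi_{R_1})*L^{\oast n}\big)^{-1}$ then converges in $Y_A$, and the frequency-separation computation (legitimate in $Z$, as around (\ref{2.121})) shows $(I-\tilde\chi_R)*(I+L)^{-1}\in Y_A$ for $R$ large. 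For the \emph{local} pieces, the second hypothesis supplies, at each fixed $\eta_0$, the approximation (analog of (\ref{lalala}))
\be
\lim_{\delta\to 0}\big\|(e^{i\eta_0 y}\chi_\delta)*L - \widehat L(\eta_0)\,e^{i\eta_0 y}\chi_\delta\big\|_{Y_A} = 0,
\ee
so that $L$, localized in frequency near $\eta_0$, is close to the product operator with symbol $\widehat L(\eta_0)$. By hypothesis $\widehat T(\eta_0) = I + \widehat L(\eta_0)$ is invertible in $\B(A,A)$, so this product operator is invertible locally, its inverse being the product operator with symbol $\widehat T(\eta_0)^{-1}$, which lies in $Y_A$ by the product-form part of Lemma \ref{lemma2.14}; feeding the small error into a Neumann series produces a cutoff $\chi_{\eta_0}$ with $\chi_{\eta_0}*(I+L)^{-1}\in Y_A$ on a neighborhood $\mc N(\eta_0)$ of $\eta_0$. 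Covering $\overline{B(0,R)}$ by finitely many such neighborhoods, passing to a subordinate partition of unity, and adding the tail estimate then gives $(I+L)^{-1}\in Y_A$, i.e.\ $T$ is invertible in $Y_A$.

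\textbf{Main obstacle.} The crux is the frequency-localization approximation displayed above. It is exactly here that the decay-at-infinity hypothesis is indispensable: it permits truncating the $y$-integral defining $(e^{i\eta_0 y}\chi_\delta)*L$ to a region of radius $\les R$ with error small in $X_A$ after any contraction, whereupon $\chi_\delta(y_0-y)$ may be replaced by $\chi_\delta(y_0)$ at the cost of an error $\les \delta(1+R^3)$. Controlling both errors \emph{uniformly} over all contractions $e_{f,0,s}$ with $\|f\|_{L^\infty}\leq 1$ and $s\in\ISO(3)$ --- which is precisely what the $Y_A$-norm measures --- is the delicate point, mirroring the estimates around (\ref{diff}) in the proof of Theorem \ref{theorem_1.1}.
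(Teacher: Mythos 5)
Your proposal is correct and follows essentially the same route as the paper, which itself proves Theorem \ref{thm7} ``along the same lines as Theorem \ref{theorem_1.1}'': the Fourier homomorphism argument for the forward direction, and for the converse a covering argument combining frequency-local inversion near each $\eta_0$ (via the analogue of (\ref{lalala}), the product-form bound of Lemma \ref{lemma2.14}, and a Neumann series) with a separate tail estimate at infinity. If anything, your telescoping identity $(I+L)\oast\sum_{k=0}^{n-1}(-L)^{\oast k} = I-(-1)^n L^{\oast n}$ makes explicit for general $n$ the step that the paper carries out only for $n=2$ in the proof of Theorem \ref{theorem_1.1} and leaves implicit in its sketchier proof of Theorem \ref{thm7}.
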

The reader is also directed to \cite{bec} and especially to \cite{becgol}, where a similar abstract Wiener theorem is proven in simpler spaces.

In the proof of the main result we used $X_A$ and $Y_A$ for $A = L^{\infty}$. When using Wiener's theorem to invert $I+T_{1+}$ within $Y_A$ for $A$ defined by (\ref{aaa}), we have to prove that the Fourier transform
$$
I + \mc F_y T_{1+}(x_0, x_1, \eta) = I + e^{ix_0\eta} R_0(\eta^2 + i 0)(x_0, x_1) e^{-ix_1\eta} V(x_0)
$$
is invertible for each $\eta$ using Fredholm's alternative. Hence we first show that $\mc F_y T_{1+}(\eta)$ is compact on $A$, then proceed with the proof of Theorem~\ref{thm7}.

\begin{lemma}\lb{TFA} For $V \in B$ and $\eta \in \R^3$, $\mc F_y T_{1+}(x_0, x_1, \eta) \in \B(A_{x_0}, A_{x_1})$ is compact --- with $A$ defined by (\ref{aaa}).
\end{lemma}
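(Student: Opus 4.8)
The plan is to reproduce, in the space $A$ of (\ref{aaa}), the Arzel\`a--Ascoli argument that gave compactness of $R_0(|\eta|^2+i0)V$ on $L^\infty$ in Lemma \ref{lemma2.3}. As an operator the kernel acts by $f \mapsto e^{-i x_1 \eta}\, R_0(|\eta|^2+i0)\big[e^{i x_0\eta} V f\big]$, so it is the free outgoing resolvent applied to the compactly localized bounded datum $h := e^{i\cdot\eta} V f$, conjugated by the modulations $e^{\pm i x\eta}$.

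\emph{Reduction to smooth, compactly supported $V$.} First I would note that $V \mapsto \mc F_y T_{1+}(x_0,x_1,\eta)$ is linear in $V$ and bounded from $B$ into $\B(A_{x_0},A_{x_1})$ with norm $\les \|V\|_B$ uniformly in $\eta$; this is the $A$-valued form of the boundedness already encoded in $T_{1+}\in Y_A$ and in (\ref{2.136}). Since the compact operators form a norm-closed subspace of $\B(A_{x_0},A_{x_1})$ and $C_c^\infty$ is dense in $B$, it suffices to treat $V\in C_c^\infty$. The gain is twofold: $h = e^{i\cdot\eta}Vf$ is then compactly supported with $\|h\|_\infty\les\|f\|_\infty$, so by Paley--Wiener $\widehat h$ is entire and bounded together with all its derivatives, uniformly over $\|f\|_A\le 1$; and $R_0(|\eta|^2+i0)h$ is smooth and carries the $|x|^{-1}$ far-field decay of an outgoing spherical wave.

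\emph{A precompactness criterion in $A$.} Following the computation that produced the plane-wave form (\ref{2.50}) and the explicit measure (\ref{2.100}), the image $g_f := \mc F_y T_{1+}(\eta)f$ admits a representation $g_f = \int_{S^2}(g_f)_\omega(x\cdot\omega)\,d\omega$ in which each profile $(g_f)_\omega$ is a one-dimensional radial oscillatory integral, now built from $\widehat{Vf}$ rather than from $\widehat V$, together with the half-space cutoffs $\chi_{\{x\cdot\omega\lessgtr\,\cdot\}}$. I would then isolate an Arzel\`a--Ascoli/Riesz--Kolmogorov criterion for precompactness of $\{g_f : \|f\|_A\le1\}$ in $A$: (i) a uniform bound on $\int_{S^2}\big(\|(g_f)_\omega\|_\infty+\|(g_f)_\omega'\|_{\mc M}\big)\,d\omega$; (ii) equicontinuity of $\omega\mapsto(g_f)_\omega$ on $S^2$; (iii) uniform tightness of the profiles at infinity; and (iv) equicontinuity of translations of $(g_f)_\omega$. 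For $V\in C_c^\infty$ each follows from the regularity just recorded: differentiating the radial integral in $\omega$ or in the profile variable costs only a derivative or a factor on $\widehat{Vf}$, which stays bounded uniformly in $\|f\|_\infty\le 1$, while the $|x|^{-1}$ decay of the outgoing wave furnishes the tightness.

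\emph{Main obstacle.} The delicate point is precompactness in the $A$-norm itself, which measures the \emph{total variation} $\|(g_f)_\omega'\|_{\mc M}$ of the profiles; a uniform $A$-bound alone yields only boundedness, so the real work is to convert the resolvent's smoothing into honest equicontinuity and tightness of the measures $(g_f)_\omega'$, uniformly and integrably over $\omega\in S^2$. The borderline $|x|^{-1}$ decay of the outgoing wave is exactly why tightness is not automatic, and it is here that the compact support of $V$ --- available after the reduction --- does the essential work, localizing the source and forcing the angular profiles to inherit the decay and smoothness of $\widehat{Vf}$. With the criterion verified for $V\in C_c^\infty$, the density reduction of the first step finishes the proof.
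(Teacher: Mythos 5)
Your reduction to smooth, compactly supported $V$ (density in $B$ plus norm-closedness of the compact operators) is sound and matches the closing step of the paper's proof. The problem is that everything after it is exactly the part of the proof that is missing. The heart of the lemma is precompactness of $\{\mc F_y T_{1+}(\eta)f : \|f\|_A\le 1\}$ in the $A$-norm, which you yourself flag as the ``main obstacle,'' and you never carry it out: you assert that conditions (i)--(iv) ``follow from the regularity just recorded,'' but that regularity (Paley--Wiener bounds on $\widehat{Vf}$ and its derivatives) does not do the job. The radial integrals in question, $L_1^{(f)}(t\omega)=\int_0^\infty \widehat{V_f}(s\omega)e^{-its/2}s\,ds$ with $V_f=fV$, are \emph{not} absolutely convergent: $f$ is merely bounded, so $\widehat{V_f}$ is entire with bounded derivatives but has no pointwise decay in $s$, and ``differentiating costs only a derivative or a factor on $\widehat{Vf}$'' produces divergent integrals, not uniform tightness or translation equicontinuity. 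The honest bounds are Plancherel/weighted-$L^2$ bounds in the style of Lemma \ref{lema2.9}, which you would have to rerun uniformly over the unit ball of $A$. There is a second, more structural danger: a Riesz--Kolmogorov criterion in total variation is \emph{false} for families of measures with moving singular parts (translates of a Dirac mass stay at mutual TV-distance $2$), and the half-space cutoffs $\chi_{\{x\cdot\omega\lessgtr t/2\}}$ in (\ref{2.100}) create exactly such jump singularities. The profiles of the image are rescued only because the jump locations $t/2$ are integrated in $t$ against $L_1^{(f)},\tilde L_1^{(f)}$, so the singular parts average out into absolutely continuous densities $\sim(\tilde L_1^{(f)}-L_1^{(f)})(2(x\cdot\omega)\omega)$. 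You never establish this, and without it the criterion you propose to verify is not merely unverified but false for the family at hand.

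It is worth seeing how the paper sidesteps all of this. The operator applied to $f$ is $\mc F_y\frak X_{V_f+}(\cdot,\eta)$, i.e.\ the same object built from the modified potential $V_f=fV$, and $V\mapsto \mc F_y\frak X_{V+}(\cdot,\eta)\in A$ is norm-continuous from $B$. Since the unit ball of $A$ consists of integrable combinations of characteristic functions of half-spaces, the image of the unit ball lies in the set of mass-$\le 1$ integrable combinations of the single family $K=\{\mc F_y\frak X_{V_{t,\omega}+}(\cdot,\eta)\}$, $V_{t,\omega}=\chi_{\{x\cdot\omega\ge t\}}V$. For compactly supported $V$ the map $(t,\omega)\mapsto V_{t,\omega}\in B$ is continuous with effectively compact parameter range, so $K$ is compact in $A$; choosing a finite $\epsilon$-net for $K$ and using compactness of the set $G$ of bounded combinations of the net shows the image is totally bounded. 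No equicontinuity, smoothing, or Riesz--Kolmogorov criterion is ever needed --- compactness comes from convexity plus a compactly parametrized family of potentials, which is precisely the mechanism your oscillatory-integral route struggles to reproduce.
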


\begin{proof}[Proof of Lemma \ref{TFA}]
Recall that by (\ref{2.32}) and (\ref{2.100}) $W_{1+}$ has the form
$$
(W_{1+} f)(x) = \int_{\ISO(3)} \int_{\set R^3} f(s x - y) \dd g_{s, y}(x),
$$
where, for $S_\omega x = x - 2(x \cdot \omega) \omega$,
$$
\dd g_{S_\omega, -y}(x) = \frac 1 2 \delta_{\frac y {|y|}}(\omega) \big(\chi_{(-\infty, |y|/2)}(x \cdot \omega)\ L_1(y) + \chi_{(|y|/2, \infty)}(x \cdot \omega)\ \tilde L_1(y)\big) |y|^{-2} \dd y.
$$
Thus for almost each $y \in \R^3$ $\frak X_{V+}(x, y)$ is in $A_x$ and
$$
\int_{\R^3} \|\frak X_{V+}(x, y)\|_{A_x} \dd y \les \|V\|_B.
$$
Note that the structure of $W_{1+}$ is independent of $V$; only the coefficients in this structure formula depend on $V$. This implies norm continuity, i.e.
$$
\int_{\R^3} \|\frak X_{V_1+}(x, y) - \frak X_{V_2+}(x, y)\|_{A_x} \dd y \les \|V_1 - V_2\|_B.
$$
Therefore the Fourier transform
$$
\mc F_y \frak X(x, \eta) = \int_{\R^3} \frak X_{V+}(x, y) e^{-iy\eta} \dd y
$$
is bounded and continuous for each $\eta$ as a function of $V \in B$ into $A_x$:
$$
\Big\|\int_{\R^3} \big(\frak X_{V_1+}(x, y) - \frak X_{V_2+}(x, y)\big) e^{-iy\eta} \dd y\Big\|_{A_x} \les \|V_1 - V_2\|_B.
$$
This implies that $\mc F_y T_{1+}(x_0, x_1, \eta) \in \B(L^\infty_{x_0}, A_{x_1})$:
$$\begin{aligned}
\|\mc F_y T_{1+}(x_0, x_1, \eta) f(x_0)\|_{A_{X_1}} &= \bigg\|\int_{\R^6} f(x_0) T_{1+}(x_0, x_1, y) e^{-i\eta y} \dd x_0 \dd y\bigg\|_{A_{x_1}} \\
&= \|\mc F_y \frak X_{V_f+}(x, \eta)\|_{A_{x_1}} \les \|V_f\|_B \les \|f\|_{L^\infty} \|V\|_B.
\end{aligned}$$
In particular, for a fixed potential $V \in B$, consider the family of potentials
$$
V_{t, \omega}(x) := \chi_{\{x \mid x \cdot \omega \geq t\}}(x) V(x),
$$
where $t \in \R$, $\omega \in S^2$. Then the map $(t, \omega) \mapsto V_{t, \omega}$ is continuous from $\R \times S^2$ into $B$.

Take $V$ of compact support; then the set of $(t, \omega)$ for which $V_{t, \omega} \ne 0$ is bounded, so the range $\{V_{t, \omega} \mid V_{t, \omega} \ne 0\} \cup \{0\}$ of the mapping $V_{t, \omega}$ is also compact in $B$. Thus for fixed $\eta$ and compact $V$
$$
K = \{\mc F_{\eta} \frak X_{V_{t, \omega}}(x, \eta) \mid t \in \R,\ \omega \in S^2\}
$$
is compact within $A$.

Then consider for $\eta \in \R^3$
$$\begin{aligned}
S &= \Big\{\int_{\R^6} f(x_0) T_{1+}(x_0, x_1, y) e^{-i\eta y} \dd x_0 \dd y \mid \|f\|_A \leq 1\Big\} \\
&= \{\mc F_y \frak X_{V_f+}(x, \eta) \mid \|f\|_A \leq 1\},
\end{aligned}$$
fix $\epsilon>0$, and consider an $\epsilon$-grid $\{e_1, \ldots, e_N\}$ for $K \subset A$.

Every element $f$ in the unit ball of $A$ is a convex combination of characteristic functions of half-planes. Thus each element $\sigma$ of $S$ is a convex combination of elements of $K$, which we approximate by means of the $\epsilon$-grid. Then each $\sigma \in S$ can be approximated to the order of $\epsilon\|\mc F_y T_{1+}(x_0, x_1, \eta)\|_{L^{\infty}_\eta \B(A_{x_0}, A_{x_1})}$ by a convex combination of grid elements belonging to
$$
G = \Big\{\sum_{k=1}^N \alpha_k e_k \mid \sum_{k=1}^N |\alpha_k| \leq 1\Big\},
$$
i.e.\ $\sup_{s \in S} d_A(s, G) < \epsilon$. Since the set $G \subset A$ is compact, it also has a finite $\epsilon$-grid, so $S$ admits a finite $C\epsilon$-grid. As $\epsilon>0$ was chosen arbitrarily, $S$ is compact.

Finally, since $S$ is compact, $\mc F_y T_+(x_0, x_1, \eta)$ is compact in $\B(A_{x_0}, A_{x_1})$ for every $\eta$ when $V \in B$ has compact support. By continuity, this extends to all $V \in B$.
\end{proof}


Now we prove Theorem \ref{thm7}, along the same lines as Theorem \ref{theorem_1.1}.
\begin{proof}[Proof of Theorem \ref{thm7}] One inference is clear: if $T \oast U = U \oast T = I$, then
$$
\widehat T(\lambda) \circ \widehat U(\lambda) = \widehat U(\lambda) \circ \widehat T(\lambda) = \widehat I (\lambda) = I.
$$
To prove the converse, we assume that $\widehat T(\eta)$ is invertible in $\B(A, A)$ for each $\eta$ and explicitly construct the inverse of $T$ in $Y$.

Our strategy is finding an inverse for $T$ on some neighborhood of every point in $\R^3$ and then covering a large ball in $\set R^3$ with finitely many such neighborhoods. What is left is a neighborhood of infinity, which we treat separately.

Let $\chi$ be a smooth cutoff function, such that $\widehat \chi(\eta) = 1$ on some neighborhood of zero and $\widehat \chi$ has compact support, and let
\be
\chi_{\delta}(x) = \delta^{3} \chi(\delta x), \chi_R(x) = R^{3} \chi(R x).
\ee
Then $\widehat{\chi_{\delta}}(\eta) = \widehat{\chi}(\delta^{-1} \eta)$ and $\widehat{\chi_R}(\eta) = \widehat{\chi}(R^{-1} \eta)$.

Firstly, note that
\be
\lim_{R \to \infty} \|\chi_R(y) * L - L\|_Y = 0.
\ee
Also observe that for each fixed $\eta_0$
$$
\lim_{\delta \to 0} \|e^{-i\eta_0 y} \chi_{\delta}(y) * L - \mc F_y L(\eta_0) \chi_{\delta}(y)\|_Y = 0;
$$
see (\ref{lalala}) and its proof.

For each $\eta_0 \in \set R^3$, $\widehat T(\eta_0)$ is invertible; then, for sufficiently small $\delta$, $e^{i y \eta_0}\chi_{\delta}(y) *\widehat T(\lambda) \sim e^{iy\eta_0} \chi_{\delta}(y) \widehat T(\eta_0)$ is also invertible locally.

By choosing a sufficiently large $R$, we can make $(I-\chi_R(y)) * T(y)$ as small as we wish, so we can invert $T$ outside a ball of sufficiently large radius.

We obtain the conclusion after covering $\set R^3$ by finitely many such sets --- one at infinity and several neighborhoods of points in $\R^3$ --- and inverting $T$ on each set.
\end{proof}

This leads directly to the proof of Theorem \ref{thm_general}.
\begin{proof}[Proof of Theorem \ref{thm_general}]
For the $L^p$ boundedness of the wave operators, $L^{\infty}$ is sufficient in the above. However, for a sharper result, it matters that $g_{s, y}$ that appear in (\ref{2.32}) are not general $L^{\infty}$ functions.

In fact, these functions have a specific structure: indeed, in (\ref{2.32})
$$
g_{S_\omega, y}(x) = g(x \cdot \omega), g' \in \mc M.
$$
Making abstraction of $\omega \in S^2$, we retain the fact that $g_{S_\omega, y}(x)$ is essentially a function of one coordinate.

We then define $X_A$ and $Y_A$ and repeat the proof of Theorem \ref{theorem_1.1} with $A$ given by (\ref{aaa}) instead of $A=L^{\infty}$. The Fourier transform $\mc F_y(x_0, x_1, \eta)$ is a compact operator for each $\eta$ by Lemma \ref{TFA}.

Thus Fredholm's alternative applies and $I + \mc F_y(x_0, x_1, \eta)$ is invertible if and only if the spectral condition holds in $A \subset L^{\infty} \subset \langle x \rangle^\sigma L^2$. We refer the reader to Lemma \ref{lem2.1} for more details.

Finally, we need to show that the preconditions (\ref{2.73}) and (\ref{2.74}) are met. However, looking back at the proof given to these two conditions originally in Lemma \ref{lm2.13}, we see that we have actually proved the stronger statements required here.

Finally, the fact that $W_\pm$ and $W_\pm^*$ are in $X_A$ directly implies that they are an integrable combination of isometries composed with multiplication by characteristic functions of half-planes.
\end{proof}

\subsection{Proofs of Corollaries \ref{cor_sobolev}--\ref{wei} and of Propositions \ref{asimptotic} and \ref{multi}}
\begin{proof}[Proof of Corollary \ref{cor_sobolev}]
Let $\dot W^{1, \mc M} = \{f \mid |\dl f| \in \mc M\}$.

Then $\dot W^{1, \mc M}$ has property (\ref{1.12}). Indeed, assume $f \in \dot W^{1, \mc M}$ and let $g(x) = g_{\omega}(x \cdot \omega)$, $g_{\omega} \in \dot W^{1, \mc M}$. We decompose $x \in \set R^d$ into the component perpendicular to $\omega$ and the one parallel to it, $x = (x_{\omega}, x_{\perp})$.

Note that $\partial_{x_{\perp}} (gf) = g \partial_{x_{\perp}} f$, because $g$ is constant in the perpendicular direction. Moreover, 
$$\begin{aligned}
\|\partial_{x_{\omega}} (f g)\|_{\mc M_x} &\leq \|\partial_{x_\omega} f g\|_{\mc M_x} + \|f \partial_{x_\omega} g\|_{\mc M_x} \\
&\leq \|\partial_{x_\omega} f\|_{\mc M_x} \|g\|_{L^\infty_x} + \|f\|_{\mc M_{x_\perp} L^{\infty}_{x_\omega}} \|\partial_{x_\omega} g\|_{L^{\infty}_{x_\perp} \mc M_{x_\omega}} \\
&\les \|f\|_{\dot W^{1, \mc M}} \|g_{\omega}\|_{\dot W^{1, \mc M}}.
\end{aligned}$$
We are using the fact that in $\R$ $\dot W^{1, \mc M} \subset L^{\infty}$.

Thus, $\dot W^{1, \mc M}$ has property (\ref{1.12}) and $W_{\pm}$ and $W_{\pm}^*$ are bounded on this space.

To prove the boundedness of wave operators on $\dot W^{1, 1}$, one has to keep better track of the singular components of the measures involved. To begin with, wave operators take $\dot W^{1, 1} \subset \dot W^{1, \mc M}$ into $\dot W^{1, \mc M}$. Furthermore, note that $T_{1+} \oast T_{1+}$ has the property (\ref{2.73}). This implies that it can be well approximated by convolution with a kernel in the $y$ variable. Then any contraction of $T_{1+} \oast T_{1+}$ can also be well approximated by convolution. Since
$$
T_+ = I - (I+T_{1+})^{-1} = T_{1+} - (I+T_{1+})^{-1} \oast T_{1+}  \oast T_{1+},
$$
it follows that in fact any contraction of $T_+ - T_{1+}$ (in particular $W_+ - W_{1+}$) takes $\dot W^{1, 1}$ to itself --- i.e., beginning with the second power in this formal series, all singularities are sufficiently smoothened out so as not to matter.

It remains to examine the first term $W_{1+}$, which is not well-approximated by convolution. As stated before, this term has the form
$$
(W_{1+}f)(x) = \int_{\R^3} \int_{\ISO(3)} f(sx-y) \dd g_{s, y}(x)
$$
where
$$
dg_{S_\omega, -t\omega}(x) = \frac 1 2\big(\chi_{(-\infty, t/2)}(x \cdot \omega) L_1(t\omega) + \chi_{(t/2, \infty)}(x \cdot \omega) \tilde L_1(t\omega) \big) \dd t \dd\omega.
$$
For each $\omega$ and $t$, the singular part of $\dl (f(S_\omega x-y) g_{S_\omega, -t\omega})$ is concentrated on the plane $\{x \mid x \cdot \omega = t/2\}$ and is precisely given by the jump discontinuity
$$
\frac 1 2 (L_1(t\omega) - \tilde L_1(t\omega)) \delta_{t/2}(x \cdot \omega) f(S_\omega x+t\omega) \omega. 
$$
This is a singular vector-valued measure supported on $\{x \mid x \cdot \omega = t/2\}$. However, integrating the singular part of $|\dl (f(S_\omega x-y) g_{S_\omega, -t\omega})|$ for fixed $\omega$ over all $t \geq 0$ we obtain the measure
$$
\frac 1 2 |L_1(2x\cdot \omega) - \tilde L_1(2x \cdot \omega)| \chi_{[0, \infty)}(x \cdot \omega) |f(x)|.
$$
This is no longer a singular measure, since its support is a whole half-plane and we show it is in fact absolutely continuous. Again decompose $x$ into $x_\perp$ perpendicular to $\omega$ and $x_\omega$ parallel to $\omega$. Then $\|f(x)\|_{L^1_{x_\perp} L^\infty_{x_\omega}} \les \|f\|_{\dot W^{1, 1}}$ and
$$
\|L_1(2x\cdot \omega) - \tilde L_1(2x \cdot \omega)\|_{L^\infty_{x_\perp} L^1_{x_\omega}} \les \|L_1(t\omega)\|_{L^1_t} + \|\tilde L_1(t\omega)\|_{L^1_t} < \infty
$$
for $g$-almost all $\omega$, so
$$
\||L_1(2x\cdot \omega) - \tilde L_1(2x \cdot \omega)| \chi_{[0, \infty)}(x \cdot \omega) |f(x)|\|_{L^1_x} \les \|f\|_{\dot W^{1, 1}} (\|L_1(t\omega)\|_{L^1_t} + \|\tilde L_1(t\omega)\|_{L^1_t}).
$$
Integrating in $\omega$ as well, we obtain a finite quantity. Thus $\dl (W_{1+}f)(x)$ has no singular part, so $W_{1+}f \in \dot W^{1, 1}$ --- hence $W_+$ takes values in $\dot W^{1, 1}$ as well.

Next, consider a function $f \in \dot H^s$, $0 \leq s \leq 1/2$. Note that
$$
\|f\|_{\dot H^s_x} \sim \|f\|_{L^2_{x_\perp} \dot H^s_{x_\omega}} + \|f\|_{L^2_{x_\omega} \dot H^s_{x_\perp}}.
$$
Since $g(x) = g_\omega(x\cdot \omega)$ is constant in the perpendicular directions,
\be\lb{1.17}
\|fg\|_{L^2_{x_\omega} \dot H^s_{x_\perp}} \leq \|f\|_{L^2_{x_\omega} \dot H^s_{x_\perp}} \|g_{\omega}\|_{L^{\infty}}.
\ee
Likewise,
\be\lb{1.18}
\|fg\|_{L^2_{x_\perp} \dot H^s_{x_\omega}} \les \|f\|_{L^2_{x_\perp} \dot H^s_{x_\omega}} \|g\|_{L^{\infty}_{x_\perp} (\dot W^{1, 1} \cap L^{\infty})_{x_\omega}}.
\ee
This is based on the one-dimensional Leibniz rule, valid for $0 \leq s < 1/2$,
\be\lb{2.147}
\|f_1 f_2\|_{\dot H^s} \leq \|f_1\|_{\dot H^s} \|f_2\|_{\dot W^{1, \mc M}}.
\ee
To prove (\ref{2.147}), start from the fact that
\be\lb{2.148}
\|\chi_{[x_0, \infty)}(x) f\|_{\dot H^s} \les \|f\|_{\dot H^s}
\ee
for $0 \leq s < 1/2$ and use Minkowski's inequality. In turn, (\ref{2.148}) follows from the theory of the Hilbert transform with $A_p$-weights, since $|x|^s$ is an $A_2$-weight for $-1 < s < 1$.

Then $W_\pm$ and $W_\pm^*$ are bounded on $\dot H^s$, $-1/2 < s < 1/2$.

When $s \geq 1/2$, $\dot H^s$ functions on $\set R$ are Lipschitz continuous or in $BMO$, so this estimate can no longer hold.

For other Sobolev spaces $W^{s, p}$, we establish that
$$
M(\xi) = \frac{|\xi|^s}{|\xi_{\omega}|^s + |\xi_{\perp}|^s}
$$
is a bounded Fourier multiplier on $L^p$ (obvious when $p = 2$). Indeed, for the symbol $|\xi|^s$, one has that
$$
\big|\partial^m |\xi|^s\big| \les_m |\xi|^{s-m}
$$
and likewise for $|\xi_{\omega}|^s$ and $|\xi_{\perp}|^s$. It follows that $\big|\partial^m M(\xi)\big| \les_m |\xi|^{-m}$.

Thus, $M(\xi)$ is a Mihlin multiplier and is bounded on $L^p$, $1<p<\infty$. We use this fact in the computation
$$\begin{aligned}
\|f\|_{\dot W^{s, p}} &= \||\dl|^s f\|_p \\
&= \|M(\dl)(|\dl_{\perp}|^s f + |\dl_{\omega}|^s f)\|_p \\
&\leq \|M(\dl)\|_{\B(L^p, L^p)} (\|f\|_{L^p_{\omega} \dot W^{s, p}_{\perp}} + \|f\|_{L^p_{\perp} \dot W^{s, p}_{\omega}}).
\end{aligned}$$
Both of these anisotropic norms are bounded as in (\ref{1.17}--\ref{1.18}), as long as $1<p<\infty$ and $0 \leq sp < 1$. One uses a one-dimensional Leibniz-type inequality akin to (\ref{2.147}), namely
$$
\|f_1 f_2\|_{\dot W^{s, p}} \leq \|f_1\|_{\dot W^{s, p}} \|f_2\|_{\dot W^{1, \mc M}}.
$$
Thus
$$
\|fg\|_{\dot W^{s, p}} \les \|f\|_{\dot W^{s, p}} \|g_{\omega}\|_{\dot W^{1, \mc M}}.
$$
At the endpoint $sp=1$, this inequality is again false because $\dot W^{s, p} \subset BMO$, which is not preserved by sharp cutoff functions.

Then $W_\pm$ and $W_\pm^*$ are bounded on $W^{s, p}$, $|s|<1/p$, by Theorem \ref{thm_general}.

Finally, $W_\pm$ and $W_\pm^*$ are bounded on $C_b$. Indeed, we approximate the potential $V$ in $B$ by a sequence of potentials $V_n \in B \cap L^{3/2+\epsilon}$.

The wave operators $W_+^n$ corresponding to $V_n$ preserve $W^{2, 3/2+\epsilon}$, by Corollary \ref{cor_sobolev_mare}.

Take $f \in C_b$ and approximate it in $L^{\infty}$ by $f_m \in W^{2, 3/2+\epsilon}$. Since $W_+^n f_m \in W^{2, 3/2+\epsilon} \subset C_b$, for fixed $n$, one has that $W_+^n f \in C_b$ for each $n$. As $n \to \infty$, $W_+^n f \to W_+ f$ in $L^{\infty}$, so $W_+ f \in C_b$. The same is true of $W_-$~and~$W_\pm^*$.

Since $W_\pm$ and $W_\pm^*$ are bounded on $C_b$ for each $V_n$, we retrieve the same conclusion in the limit for $V$.
\end{proof}

\begin{proof}[Proof of Corollary \ref{cor_sobolev_mare}] With no loss of generality, let $s\geq 0$ and assume that $-1$ is not an eigenvalue of $H$. Following \cite{yajima} and \cite{yajima9}, by (\ref{1.4})
$$
H P_c W_{\pm} = W_{\pm} H_0,\ (H + I)^{-1} P_c W_{\pm} = W_{\pm} (H_0 + I)^{-1}.
$$
$f \in W^{s, p}$ is equivalent to $(H_0 + 1)^{s/2} f \in L^p$, thus to $(H P_c + 1)^{s/2} W_{\pm} f \in L^p$, provided that $V \in B$.

We prove this implies $(H_0 + 1)^{s/2} W_{\pm} f \in L^p$ as well, by induction.

Since $V \in W^{s, 3/(s+2)} \subset L^{3/2}$, the eigenstates of $H$ are in $W^{s+2, q}$ for any $q \in (1, \infty)$. Indeed, assume $\lambda<0$ is an eigenvalue of $H$ and let $f \in L^2$, $f \ne 0$, be such that $(-\Delta + V - \lambda) f = 0$. Then
$$
\|\dl f\|_2^2 + \langle V f, f \rangle - \lambda \|f\|_2^2 = 0.
$$
Write $V = V_1 + V_2$, $V_1 \in L^{\infty}$, $\|V_2\|_{L^{3/2}} << 1$. We obtain that $f \in H^1 \subset L^6$ and $V f \in L^{6/5}$, so $f \in W^{2, 6/5}$ and
$$
f + R_0(\lambda) V f = 0.
$$
Iterating, we obtain that $V f \in W^{s, 6/5}$, so $f \in W^{s+2, 6/5}$. Splitting $V$ into $V = V_1 + V_2$, $V_1 \in \mc S$, $\|V_2\|_{W^{s, 3/(s+2)}} << 1$, we write
$$
f = - (I + R_0(\lambda) V_2)^{-1} V_1 f.
$$
Then $f \in W^{s+2, q}$ for any $q \in (1, \infty)$.

Because $W_{\pm} f \in P_c W^{s, p}$ and $P_c W^{s, p} \subset W^{s, p}$,
$$
\|(H P_c + 1)^{s/2} W_{\pm} f \|_{L^p} = \|(H+1)^{s/2} W_{\pm} f \|_{L^p}.
$$
The conclusion reduces to
$$
\|(H + 1)^{s/2} W_{\pm} f \|_{L^p} \sim \|(H_0+1)^{s/2} W_{\pm} f \|_{L^p}.
$$
It suffices to show that when $0 \leq \sigma \leq \max(0, s-2)$
\be\lb{equiv}
\|(H + 1) g\|_{W^{\sigma, p}} \sim \|(H_0 + 1) g\|_{W^{\sigma, p}}.
\ee
Taking into account the identities
$$
(H + 1) = (I + V R_0(1)) (H_0 + 1),\ H_0 + 1= (I + V R_0(1))^{-1} (H+1),
$$
we need to show that $(I + V R_0(1))$ is bounded and invertible on $W^{\sigma, p}$. Thus, it is required of $V$ that for $0 \leq \sigma \leq \min (0, s-2)$
\be\lb{leib}
\|V g\|_{W^{\sigma, p}} \les \|g\|_{ W^{\sigma+2, p}}.
\ee

When $\sigma=0$, we assume that $V \in L^{3/2}$ if $p<3/2$, $V \in L^{3/2+\epsilon}$ for $p=3/2$, and $V \in L^p$ if $p>3/2$. The Sobolev embedding then implies (\ref{leib}).

When $\sigma>0$, we take $V \in W^{\sigma, 3/(\sigma+2)} \subset L^{3/2}$ for $(\sigma+2) p <3$, $V \in W^{\sigma, \epsilon + 3/(\sigma+2)}$ if $(\sigma+2) p =3$, and $V \in W^{\sigma, p}$ if $(\sigma+2) p > 3$; then we obtain (\ref{leib}) by the fractional Leibniz rule, see \cite{taylor}.

Thus $I + V R_0(1)$ is bounded on $W^{\sigma, p}$. Furthermore, by approximating $V$ with smoother potentials we obtain that $I + V R_0(1)$ is compact on $W^{\sigma, p}$. By Fredholm's alternative either it is invertible or the equation
$$
(I + V R_0(1)) f = 0
$$
has a nonzero solution $f \in W^{\sigma, p}$. However, this would imply that $-1$ is an eigenvalue of $H$, which we assumed was not the case.

Concerning $W_\pm^*$, note that
$$
H_0 W_{\pm}^* = W_{\pm} H P_c.
$$
$f \in W^{s, p}$ implies $(H P_c + 1)^{s/2} f \in L^p$, so $(H_0 + 1)^{s/2} W_{\pm}^* f \in L^p$. The general conclusion again follows by interpolation.
\end{proof}

\begin{proof}[Proof of Corollary \ref{wei}] By Lemma \ref{lema2.13}, $T_{1+} \in Y_{\beta}$; see (\ref{eq2.84}). $Y_{\beta}$ is translation-invariant in the sense that
$$
\|T(x_0, x_1, y + y_0)\|_{Y_{\beta}} \les \langle y_0 \rangle^{\beta} \|T(x_0, x_1, y)\|_{Y_{\beta}}.
$$
We then repeat the proof of the main theorem in this subalgebra and conclude that, when $V \in \langle x \rangle^{-\alpha} L^2$, the wave operators are in the space $X_{\beta}$.

$X_{\beta}$ is characterized by a stronger decay at infinity; its elements are integrable combinations of operators of the form
$$
(Tf)(x) = g_{s, y}(x) f(sx-y)
$$
with
$$
\int_{\ISO(3)} \int_{\set R^3} \langle y \rangle^{\beta} \dd \|g_{s, y}\|_{L^{\infty}_x} < \infty.
$$
Symmetries and convolution with $\langle y \rangle^{-\beta} L^1_y$ preserve $\langle y \rangle^{-\beta} L^p_y$, $1 \leq p \leq \infty$. We see this by interpolating between $p=1$ and $p=\infty$, where it is clear.
\end{proof}

\begin{proof}[Proof of Corollary \ref{multi}] Let $P_c f_1 = W_+ g_1$, $P_c g = W_+ g_2$. By the intertwining property (\ref{1.4})
$$
e^{itH} P_c f_1 = W_+ e^{itH_0} g_1,\ e^{itH} P_c f_2 = W_+ e^{itH_0} g_2.
$$
(\ref{multilin}) then becomes
$$
\Big|\int_0^{\infty} U (W_+ e^{itH_0} g_1) (W_+ e^{itH_0} g_2) \dd t\Big| \les \|g_1\|_{H^{-1}} \|g_2\|_{H^{-1}} \|U\|_{L^{\infty} \cap L^{3/2}}.
$$
Using the structure formula Theorem \ref{theorem_1.1}, we write $W_+-I$ as an integrable combination of elementary transformations of the form
$$
e_{g, y, s} f(x) = g(x) f(s x - y).
$$
By Minkowski's inequality, it suffices to prove the result when each $W_+$ is replaced by an elementary transformation. Translations commute with $e^{itH_0}$ and the scalar function $g(x) \in L^{\infty}$ can be made part of $U$.

Thus the statement reduces to the same estimate for the free evolution:
\be\lb{est_free}
\Big|\int_0^{\infty} \tilde U\, e^{itH_0} g_1\, e^{itH_0} g_2 \dd t\Big| \les \|g_1\|_{H^{-1}} \|g_2\|_{H^{-1}} \|U\|_{L^{\infty} \cap L^{3/2}}.
\ee

Write (\ref{est_free}) in stationary form, using the Fourier transform, as
\be\lb{janpet}
\Big|\int \widehat U(\xi_1 - \xi_2) \frac {\widehat f_1(-\xi_1) \widehat f_2(\xi_2)}{|\xi_1|^2 + |\xi_2|^2} \dd \xi_1 \dd \xi_2 \Big| \les \|f_1\|_{H^{-1}} \|f_2\|_{H^{-1}} \|U\|_{L^{\infty} \cap L^{3/2}}.
\ee
The space of Schur multipliers $M(A, B) \subset L^{\infty}(A \times B)$ is
\be\lb{schur}\begin{aligned}
&M(A, B) := \Big\{h(x, y) \in L^{\infty}(A \times B) \mid \exists \mu \in \mc M\ s.t.\\
&h(x, y) = \int f_\mu(x) g_\mu(y) \dd \mu,\ \int \|f_\mu\|_{L^{\infty}_A} \|g_\mu\|_{L^{\infty}_B} \dd \mu < \infty\Big\}.
\end{aligned}\ee
By the results of Janson--Peetre \cite{peetre}, $\frac {|\xi_1| |\xi_2|} {|\xi_1|^2 + |\xi_2|^2}$ is a Schur multiplier.

Then we replace $\frac 1 {|\xi_1|^2 + |\xi_2|^2}$ by $\frac 1 {|\xi_1| |\xi_2|}$ in (\ref{janpet}), which becomes
$$
\Big|\int \widehat U(\xi_1 - \xi_2) \frac {\widehat f_1(-\xi_1) \widehat f_2(\xi_2)}{|\xi_1| |\xi_2|} \dd \xi_1 \dd \xi_2 \Big| \les \|f_1\|_{H^{-1}} \|f_2\|_{H^{-1}} \|U\|_{L^{\infty} \cap L^{3/2}}.
$$
This is equivalent to
$$
\int \widehat U(\xi_1 - \xi_2) \widehat f_1(-\xi_1) \widehat f_2(\xi_2) \dd \xi_1 \dd \xi_2 \les \|f_1\|_{\dot H^1 + L^2} \|f_2\|_{\dot H^1 + L^2} \|U\|_{L^{\infty} \cap L^{3/2}},
$$
which is true by Young's and H\"{o}lder's inequalities.
\end{proof}

\begin{proof}[Proof of Proposition \ref{asimptotic}]
By (\ref{2.81tert}), if $V \in \langle x \rangle^{-3/2-\epsilon}L^2$, then $T_{1+}$ is in $Y_{1+\epsilon}$ as defined by the decay condition (\ref{cond_asimp}).

Applying the proof of Theorem \ref{theorem_1.1} in $Y_{1+\epsilon}$, we obtain that $T_+ \in Y_{1+\epsilon}$, hence $W_+ \in X_{1+\epsilon}$.

Due to the fact that
$$
T_+ = (I + T_{1+})^{-1} \oast T_{1+},
$$
we obtain that $W_+-I$ is a combination of operators of the form $\frak X_{V_f+}(x, s^{-1}y+x-s^{-1}x)$, see (\ref{xfrak}). In all of these operators, the leading term $g^1_{s, y}(x)$ is constant in $x$ for $g$-almost all $s$ and $y$, since it has the explicit form given by (\ref{2.81tertt}).
\end{proof}

\subsection{Proof of Theorem \ref{thm_s}}
In deriving the results, we use a more general representation formula along the lines of Lemma \ref{lemma2.1}:
\begin{lemma}\lb{reprezentare}
Assume that for any $f$, $g \in \mc S$
$$\begin{aligned}
\langle W f, g \rangle &=\lim_{\epsilon \to 0} \int_\R \langle W(t) e^{-itH_0} e^{-\epsilon|t|} f, g \rangle \dd t =\int_0^{\infty} \langle \widehat W(\lambda) R_{0a}(\lambda) f, g \rangle \dd \lambda.
\end{aligned}$$
Let $T$ be defined by $\mc F_{x_0, x_1, y} T(\xi_0, \xi_1, \eta) = \mc F_{a, b}\big(\widehat W(|\eta|^2)\big)(\xi_0+\eta, \xi_1+\eta)$ or, equivalently, $\mc F_y T(x_0, x_1, \eta) = e^{-ix_0\eta} \widehat W(|\eta|^2)(x_0, x_1) e^{ix_1\eta}$. Then
$$
\langle W f, g \rangle = \int T(x_0, x_1, y) f(x-y) \ov g(x) \dd x_0 \dd y \dd x.
$$
\end{lemma}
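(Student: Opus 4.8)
The plan is to mirror the proof of Lemma~\ref{lemma2.1}, with the explicit kernel $R_V(|\eta|^2+i\epsilon)V$ appearing there replaced by the abstract symbol $\widehat W(|\eta|^2)$ supplied by the hypothesis. First I would start from the given spectral representation
\be
\langle Wf, g\rangle = \int_0^{\infty} \langle \widehat W(\lambda) R_{0a}(\lambda) f, g\rangle \dd \lambda
\ee
and pass to the Fourier side, using that $R_{0a}(\lambda)$, defined in (\ref{eq_2.2}), is the spectral density of $H_0$. By the Sokhotski--Plemelj formula its Fourier symbol is a surface measure on the sphere $\{|\xi|^2=\lambda\}$:
\be
\widehat{R_{0a}(\lambda)f}(\xi) = 2\pi\, \delta(|\xi|^2-\lambda)\, \widehat f(\xi),
\ee
which is legitimate for $f, g \in \mc S$ and simply restates the identity $R_{0a}(\lambda)=(e^{itH_0})^{\wedge}$ recorded among the notations.

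Next I would insert this density and write $\langle \widehat W(\lambda) h, g\rangle$ through the double Fourier transform $\mc F_{a,b}\big(\widehat W(\lambda)\big)$ of the kernel of $\widehat W(\lambda)$, so that
\be
\langle \widehat W(\lambda) R_{0a}(\lambda) f, g\rangle = C \int_{\R^6} \mc F_{a,b}\big(\widehat W(\lambda)\big)(\zeta_0,\zeta_1)\, \delta(|\zeta_0|^2-\lambda)\, \widehat f(\zeta_0)\, \ov{\widehat g}(\zeta_1) \dd \zeta_0 \dd \zeta_1.
\ee
Integrating in $\lambda \in (0,\infty)$ collapses the delta to $\lambda=|\zeta_0|^2$, and after relabelling $\zeta_0=\eta$, $\zeta_1=\eta+\xi$ this produces the frequency-space identity
\be
\langle Wf, g\rangle = C \int_{\R^6} \mc F_{a,b}\big(\widehat W(|\eta|^2)\big)(\eta,\eta+\xi)\, \widehat f(\eta)\, \ov{\widehat g}(\eta+\xi) \dd \eta \dd \xi,
\ee
the exact analogue of (\ref{eqn2.7}). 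The absence of a free $\zeta_0$-frequency corresponds, on the kernel side, to the integration $\int T \dd x_0$, i.e.\ to setting $\xi_0=0$ in the definition of $T$.

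Finally, by the very definition $\mc F_{x_0,x_1,y}T(\xi_0,\xi_1,\eta)=\mc F_{a,b}\big(\widehat W(|\eta|^2)\big)(\xi_0+\eta,\xi_1+\eta)$, I recognize $\mc F_{a,b}\big(\widehat W(|\eta|^2)\big)(\eta,\eta+\xi)=\mc F_{x_0,x_1,y}T(0,\xi,\eta)$ and invert the Fourier transforms in physical space exactly as in the passage from (\ref{eqn2.7}) and (\ref{2.24}) to (\ref{2.19})--(\ref{2.20}), arriving at
\be
\langle Wf, g\rangle = \int_{\R^9} T(x_0, x, y)\, f(x-y)\, \ov g(x) \dd x_0 \dd y \dd x.
\ee
I would also verify that the two stated expressions for $T$ agree, precisely as (\ref{2.24}) is seen to match (\ref{eqn2.13}); this is mere bookkeeping of the frequency shifts $e^{\mp i x_j \eta}$. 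The main obstacle is analytic rather than algebraic: justifying the use of the singular density $R_{0a}(\lambda)$ and the interchange of the $\lambda$-integration with the Fourier integrals, together with the implicit $\epsilon\to0$ limit built into the hypothesis. Restricting to Schwartz $f,g$ and reading $R_{0a}(\lambda)$ as the surface measure on $\{|\xi|^2=\lambda\}$ makes each step meaningful, after which all remaining manipulations are the same changes of variables already performed in Lemma~\ref{lemma2.1}.
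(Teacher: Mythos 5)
Your proposal is correct and follows essentially the same route as the paper: the paper's proof is exactly the chain of equalities passing to frequency variables, collapsing the $\lambda$-integral against the spectral density of $H_0$, relabelling $\eta_0=\eta$, $\eta_1=\eta+\xi$, and then reading off the kernel form from the definition of $T$ as in Lemma \ref{lemma2.1}. You merely make explicit the steps the paper compresses into ``a computation shows'' (the Sokhotski--Plemelj identity for $R_{0a}$ and the inversion back to physical space), which is a faithful filling-in rather than a different argument.
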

\begin{proof} A computation shows that
$$\begin{aligned}
\langle W f, g \rangle &= \int_0^{\infty} \langle \widehat W(\lambda) R_{0a}(\lambda) f, g \rangle \dd \lambda \\
&= \int_{\R^6} \mc F_{a, b}\big(\widehat W(|\eta_0|^2)\big)(\eta_0, \eta_1) \widehat f(\eta_0) \ov {\widehat g}(\eta_1) \dd \eta_1 \dd \eta_0 \\
&= \int_{\R^6} \mc F_{a, b}\big(\widehat W(|\eta|^2)\big)(\eta, \eta+\xi) \widehat f(\eta) \ov{\widehat g}(\eta+\xi) \dd \eta \dd \xi.
\end{aligned}$$
\end{proof}

We next derive a formula for the scattering operator $S$ given by (\ref{1.5}).
\begin{lemma} Assume that $V \in L^{3/2, 1}$ and zero is neither an eigenvalue, nor a resonance for $H = -\Delta+V$. Then
\be\lb{1.15}
S = \frac 1 {2\pi}\int_\R (I + R_0(\lambda+i0) V) R_{Va}(\lambda) (I+ V R_0(\lambda+i0)) \dd \lambda.
\ee
Moreover, let
\be\begin{aligned}\lb{2.152}
T_S &= I + (I + T_{1-})^{-1} \oast (T_{1+} - T_{1-}).
\end{aligned}\ee
Then
$$
\langle S f, g \rangle = \int_{\R^9} T_S(x_0, x, y) f(x-y) \ov g(x) \dd x_0 \dd y \dd x.
$$
\end{lemma}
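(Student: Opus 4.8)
The plan is to prove the two assertions in turn: first the stationary formula \eqref{1.15}, then the kernel identity with $T_S$ given by \eqref{2.152}. The bridge between them is Lemma \ref{reprezentare}, which turns any operator of the form $\int_0^\infty \widehat W(\lambda)R_{0a}(\lambda)\dd\lambda$ into its three-variable kernel, together with the dictionary already established in the paper: by \eqref{comp} the $\oast$-product of two kernels corresponds to the composition of their symbols $\mc F_y T(\eta)=e^{-ix_0\eta}\widehat W(|\eta|^2)e^{ix_1\eta}$ at each fixed energy $\lambda=|\eta|^2$, with the order reversed because of the kernel composition convention \eqref{comp_op}. Since $S$ commutes with $H_0$, it has exactly the form to which Lemma \ref{reprezentare} applies, so identifying $T_S$ amounts to identifying the symbol $\widehat S(\lambda)$.

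For \eqref{1.15} I would start from $S=W_-^*W_+$ and use asymptotic completeness, $W_-^*W_-=I$, to write $S-I=W_-^*(W_+-W_-)$. Combining \eqref{eqn1.6} with its analogue for $W_-$ gives $W_+-W_-=i\int_\R e^{itH}Ve^{-itH_0}\dd t$; inserting the spectral resolution of $H_0$ and performing the regularized (with $e^{-\epsilon|t|}$) time integral turns $\int_\R e^{it(H-\lambda)}\dd t$ into $2\pi\delta(H-\lambda)=R_{Va}(\lambda)$, so that $W_+-W_-=\frac{i}{2\pi}\int_0^\infty R_{Va}(\lambda)VR_{0a}(\lambda)\dd\lambda$. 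The two symmetric boundary-value factors $I+R_0(\lambda+i0)V$ and $I+VR_0(\lambda+i0)$ in \eqref{1.15} then emerge by writing $W_\pm$ in stationary form via the resolvent identity \eqref{2.29} and using the intertwining relation $W_-^*R_{Va}(\lambda)=R_{0a}(\lambda)W_-^*$; equivalently, one invokes the factorization of the spectral density $R_{Va}(\lambda)=\widehat W_-(\lambda)R_{0a}(\lambda)\widehat W_-^*(\lambda)$, with $\widehat W_-(\lambda)=(I+R_0(\lambda-i0)V)^{-1}$ and $\widehat W_-^*(\lambda)=(I+VR_0(\lambda+i0))^{-1}$.

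For \eqref{2.152} I would apply Lemma \ref{reprezentare} to \eqref{1.15}. Substituting the factorization of $R_{Va}$ into the integrand cancels the right-hand factor, $(I+VR_0(\lambda+i0))^{-1}(I+VR_0(\lambda+i0))=I$, leaving the symbol $\widehat S(\lambda)=(I+R_0(\lambda+i0)V)(I+R_0(\lambda-i0)V)^{-1}$ sitting against $R_{0a}(\lambda)$. Reading off the symbols $\widehat T_{1\pm}(\lambda)=R_0(\lambda\pm i0)V$ of $T_{1\pm}$, and using $(I+\widehat T_{1-}(\lambda))^{-1}=I-\widehat T_-(\lambda)$ (the $-$ analogue of \eqref{2.37}), the elementary telescoping $(I+a_+)(I+a_-)^{-1}=I+(a_+-a_-)(I+a_-)^{-1}$ with $a_\pm=\widehat T_{1\pm}(\lambda)$ identifies $\widehat S(\lambda)$ with the symbol of $I+(I+T_{1-})^{-1}\oast(T_{1+}-T_{1-})$, the order reversal of $\oast$ being exactly what converts the right-multiplication $(I+a_-)^{-1}$ into the left-multiplication displayed in \eqref{2.152}. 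Since $I+T_{1-}$ is invertible in $Y$ by the proof of Theorem \ref{theorem_1.1}, we get $T_S\in Y$, and the representation for $\langle Sf,g\rangle$ is then the conclusion of Lemma \ref{reprezentare}.

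The main obstacle is the rigorous handling of the oscillatory time integral and of the boundary values: passing from $\int_\R e^{it(H-\lambda)}\dd t$ to $R_{Va}(\lambda)$ must be justified through the $\epsilon$-regularization and the limiting absorption principle already used in the paper, and one must track the $+i0$ versus $-i0$ prescriptions and the operator ordering carefully, so that the middle factor is the \emph{full} spectral density $R_{Va}$ (a difference of boundary values) rather than a single resolvent, and so that the two outer factors both come out at $+i0$. Once this convention bookkeeping is fixed, the algebra in the second and third steps is purely formal.
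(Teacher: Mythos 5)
Your proposal is correct in substance and ends at the same two formulas, but it reaches them by a genuinely different decomposition. For (\ref{1.15}) the paper never uses the isometry of $W_-$: it multiplies out $S=W_-^*W_+$ using (\ref{eqn1.6}) and (\ref{eqn1.7}), which produces four terms including a genuine double time integral $\int_0^\infty\int_{-\infty}^0(\cdots)\,\dd s\,\dd t$, evaluates them all with the half-line Fourier identities and the spectral representation of $P_c$, and checks separately that the Dirac masses on $(-\infty,0)$ vanish because $(I+R_0(\lambda_\ell)V)f_\ell=0$. You instead write $S=I+W_-^*(W_+-W_-)$, which collapses everything to single full-line time integrals and makes the point-spectrum contribution disappear automatically (the delta masses at $\lambda_\ell<0$ miss the support of $R_{0a}$); the price is two additional standard inputs the paper does not invoke: the isometry $W_-^*W_-=I$ (which is all you actually need --- ``asymptotic completeness'' is more than required) and the intertwining/factorization identity $R_{Va}(\lambda)=(I+R_0(\lambda-i0)V)^{-1}R_{0a}(\lambda)(I+VR_0(\lambda+i0))^{-1}$, whose justification demands the same $\epsilon$-regularization and limiting-absorption machinery as the paper's direct computation, so no rigor is saved there. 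For (\ref{2.152}) the two arguments are the same algebra performed on opposite sides of the dictionary: the paper rewrites the integrand of (\ref{1.15}) with the resolvent identity, reads off $T_S=I+T_{1+}-T_-\oast(I+T_{1+})$ via Lemma \ref{reprezentare}, and simplifies inside the $\oast$-algebra using $I-T_\pm=(I+T_{1\pm})^{-1}$; you cancel $(I+VR_0(\lambda+i0))^{-1}(I+VR_0(\lambda+i0))=I$ at the symbol level, telescope, and translate back through the order-reversing correspondence between $\oast$ and operator composition. Your pairing of the $-i0$ with the $+i0$ prescription in the factorization, and your bookkeeping of the order reversal, are both the correct ones, and they reproduce exactly the symbol $(I+R_0(\lambda+i0)V)(I+R_0(\lambda-i0)V)^{-1}$ that the paper's kernel-side manipulation encodes. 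Net effect: your route is algebraically lighter and handles the discrete spectrum more cleanly, while the paper's is more self-contained, using only Duhamel's formula, the Fourier identities, and the resolvent identity (\ref{2.29}) already in its toolkit.
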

More generally, this is still true when $V \in L^{3/2, \infty}_0$. The second formula allows us to apply our formalism to this problem.
\begin{proof} By (\ref{eqn1.6}) and (\ref{eqn1.7}),
$$\begin{aligned}
S 
&= \Big(P_c + i \int_{-\infty}^0 e^{itH_0} V e^{-itH} P_c \dd t\Big)\Big(P_c + i \int_0^{\infty} e^{itH} P_c V e^{-itH_0} \dd t\Big) \\
&= P_c - \int_0^{\infty} \int_{-\infty}^0 e^{-itH_0} V e^{i(t-s)H} P_c V e^{isH_0} \dd s \dd t + \\
&+ i \int_0^{\infty} (e^{itH} P_c V e^{-itH_0} + e^{-itH_0} V e^{itH} P_c) \dd t\\
&= \frac 1 {2\pi} \int_0^{\infty} \big(I + R_0(\lambda+i0) V\big) R_{Va}(\lambda) \big(I + V R_0(\lambda+i0)\big) \dd \lambda.
\end{aligned}$$
We made use of the fact that
$$
P_c = \int_0^{\infty} R_{Va}(\lambda) \dd \lambda,
$$
which follows from the Spectral Theorem, see \cite{reesim1}, and
$$\begin{aligned}
\big(\chi_{[0, \infty)}(t) e^{itH_0}\big)^{\wedge}(\lambda-i0) &= i R_0(\lambda-i0);\ (e^{itH})^{\wedge}(\lambda) = R_{Va}(\lambda); \\
\big(\chi_{(-\infty, 0]}(t) e^{itH_0}\big)^{\wedge}(\lambda+i0) &= -iR_0(\lambda+i0).
\end{aligned}$$

On $(-\infty, 0)$ the integral is a sum of Dirac measures:
$$
\sum_{\ell=1}^N (I + R_0(\lambda_\ell) V)  \langle \cdot, f_\ell \rangle f_\ell (I+ V R_0(\lambda_\ell)) = 0.
$$
Here $f_\ell$ are the eigenfunctions corresponding to the eigenvalues $\lambda_\ell$ of $H$, so $(I + R_0(\lambda_\ell) V) f_\ell = 0$.

By the resolvent identity $R_V + R_V V R_0 = R_0$ we further get
$$\begin{aligned}
S&=\int \big(I + R_0(\lambda+i0) V\big) R_{Va}(\lambda) \big(I + V R_0(\lambda+i0)\big) \dd \lambda \\
&= \int i \big(I + R_0(\lambda+i0) V\big) R_V(\lambda-i0) V R_0(\lambda+i0) - i R_0(\lambda+i0) + \\
&+ i \big(I + R_0(\lambda+i0) V\big) R_V(\lambda-i0) \dd \lambda \\
&= \int (-1)\big(I + R_0(\lambda+i0) V\big) R_V(\lambda-i0) V R_{0a}(\lambda) + \\
&+ R_{0a}(\lambda) + R_0(\lambda+i0) V R_{0a}(\lambda) \dd \lambda.
\end{aligned}$$

Due to Lemma \ref{reprezentare}, the resolvent identity $A^{-1}-B^{-1} = A^{-1} (B-A) B^{-1}$, and the fact that $T_+ = T_{1+} \oast (I + T_{1+})^{-1}$, $T_S$ is given by
$$\begin{aligned}
I + T_{1+} - T_- \oast (I+T_{1+}) &= I + (T_+ - T_-) \oast (I + T_{1+}) \\
&= I + (I+T_-)^{-1} \oast (T_{1+}-T_{1-}).
\end{aligned}$$

\end{proof}

With the representation (\ref{2.152}), the proof of Theorem \ref{thm_s} is straightforward.
\begin{proof}[Proof of Theorem \ref{thm_s}] (\ref{2.152}) implies that $T_S \in Y$ when $V \in B$, so $S$ is in $X$, hence it has a representation of the form (\ref{eqn1.8}).

To show that this is actually of the form (\ref{eqn1.11}), we identify a left ideal within $Y$ of elements whose contractions have this form, then prove that $T_S$ is in this ideal.

Namely, let $X_0 \subset X$ be the subset of those elements for which $g_{s, y}(x)$ is constant for $g$-almost every $s$ and $y$ and let $Y_0 \subset Y$ be the subset of elements $T$ whose every contraction $eT$ is in $X_0$.

Note that $Y_0 \subset Y$ is a left ideal and $T_{1+} - T_{1-}$ belongs to it. Indeed, for a given potential $V$, if $\frak X_{V \pm}$ represent $W_{1\pm}$ as per (\ref{xfrak}), then
$$
(\frak X_{V+} - \frak X_{V-}) f(x) = \int_{S^2} \int_{[0, \infty)} (K_{1+}(x, t\omega)-K_{1-}(x, t\omega)) f(x+t\omega) \dd t \dd \omega,
$$
where by (\ref{eqn2.90})
$$
K_{1+}(x, t\omega)-K_{1-}(x, t\omega) = \int_\R \widehat V(s\omega) e^{-its/2} e^{is\omega \cdot x} s \dd s = (L_1 + \tilde L_1)((t-2x\cdot \omega) \omega).
$$
Thus, for $S_\omega x = x - 2(x \cdot \omega) \omega$,
$$
(\frak X_{V+} - \frak X_{V-}) f(x) = \int_{\R^3} \int_{\ISO(3)} f(s x+y) \dd g_{s, y},
$$
where $\dd g_{S_\omega, -y} = \delta_{\frac y {|y|}}(\omega) (L_1 + \tilde L_1)(y) |y|^{-2} \dd y$ does not depend on $x$.

Then $\frak X_{V+} - \frak X_{V-} \in X_0$. Every contraction of $T_{1+} - T_{1-}$ has the form $\frak X_{V_f+}(x, s^{-1}y+x-s^{-1}x) - \frak X_{V_f-}(x, s^{-1}y+x-s^{-1}x)$, so is in $X_0$ by the same argument.

Thus, $T_{1+} - T_{1-} \in Y_0$. Formula (\ref{2.152}) shows that $T_S \in Y_0$, so $S \in X_0$. 
\end{proof}

\section*{Acknowledgments} I thank Professor Kenji Yajima for his insightful comments on this paper, as well as the anonymous referee for several useful observations.

\end{document}